\documentclass[11pt,reqno]{amsart}%,oneside

\usepackage{setspace}
\setlength{\textwidth}{15.3truecm} 
\setlength{\textheight}{21.3truecm}
 
\usepackage{color}
\usepackage[all]{xy}

\usepackage{enumitem}

\setlength{\parskip}{.1em}
 
\calclayout

\usepackage{amsmath,amssymb}
\usepackage{amsthm}
\usepackage{mathabx}
\usepackage{tikz}
\usepackage{amsfonts}
\usepackage{pst-node}
\usepackage{tikz-cd}

%\hoffset -1.5cm
%\voffset -1cm
%\textwidth 15.5truecm
%\textheight 22.5truecm
% This paragraph defines the macros for theorems and the like
\newtheorem{theorem}{Theorem}[section]
\newtheorem{proposition}[theorem]{Proposition}
\newtheorem{corollary}[theorem]{Corollary}
\newtheorem{lemma}[theorem]{Lemma}
% This paragraph defines the macros for definitions and the like
\theoremstyle{definition}
\newtheorem{definition}[theorem]{Definition}
\newtheorem{example}[theorem]{Example}

\newtheorem{remark}[theorem]{Remark}
\newtheorem{question}[theorem]{Question}

% The following paragraph writes the equation numbers with two counters,
% the first is the section number and the second resets within the section.
\makeatletter
\@addtoreset{equation}{section}
\makeatother

% The next paragraph defines macros for special roman letters to be used
 % the set of complex numbers
 % the set of natural numbers
 % the set of rational numbers
\newcommand{\bZ}{{\mathbb Z}} % the set of integer numbers
 % the unit disk
 % the set of real numbers

\newcommand{\bF}{{\mathbb F}} % the unit circle (the one dimensional torus)
% The next paragraph defines macros for caligraphic letters
\newcommand{\cA}{{\mathcal A}}

\newcommand{\cC}{{\mathcal C}}
\newcommand{\cD}{{\mathcal D}}

\newcommand{\cF}{{\mathcal F}}
\newcommand{\cG}{{\mathcal G}}

\newcommand{\cL}{{\mathcal L}}

\newcommand{\cT}{{\mathcal T}}

% The next paragraph defines special macros
 % the imaginary i
 % the differential

% Authors macros for complicated commands can be used, but be careful!
\providecommand{\AMS}{$\mathcal{A}$\kern-.1667em%
\lower.25em\hbox{$\mathcal{M}$}\kern-.125em$\mathcal{S}$}
% Use of macros should confine the general rules for AMS-LaTeX. It is
% recommended to use \newcommand and \renewcommand instead of the TeX
% primitive \def
% Next is an example of macro I designed for numbering items that are
% different than the available ``itemize, description'' environments.
% You can use it if you want!

% Topmatter produces the title, author, abstract, etc.

%%%Yalcin's 
\def\maprt#1{\smash{\,\mathop{\longrightarrow}\limits^{#1}\,}}
%\leftexp

%Remarks

\newcommand{\Aut}{{\rm Aut}}
\newcommand{\Inn}{{\rm Inn}}
\newcommand{\Stab}{{\rm Stab}}
\renewcommand{\hom}{\mathrm{Hom}}
\newcommand{\mor}{\mathrm{Mor}}
\newcommand{\ext}{\mathrm{Ext}}

\newcommand{\res}{\mathrm{Res}}
\newcommand{\Ob}{\mathrm{Ob}}

 %%%%%%%

\begin{document}
\title[Cohomology of infinite groups realizing fusion systems]{Cohomology of infinite groups realizing fusion systems } 

\author{Muhammed Sa\.id G\"undo\u gan}
\author{Erg\"un Yal\c cIn}
\address{Department of Mathematics, Bilkent University, 06800 
Bilkent, Ankara, Turkey}

\email{mgundogan@bilkent.edu.tr, yalcine@fen.bilkent.edu.tr}

\begin{abstract}  Given a fusion system $\mathcal{F}$ defined on a $p$-group $S$,  there exist infinite group models, 
constructed by Leary and Stancu, and Robinson, that realize $\cF$.  We study these models when $\cF$ is a fusion system 
of a finite group $G$ and prove a theorem which relates the cohomology of an infinite group model $\pi$ to the cohomology 
of the group $G$. We show that for the groups $GL(n,2)$, where $n\geq 5$, the cohomology of the infinite group obtained 
using the Robinson model is different than the cohomology of  the fusion system. We also discuss the signalizer functors 
$P\to \Theta(P)$ for infinite group models and obtain a long exact sequence for calculating the cohomology of a centric 
linking system with twisted coefficients.
\end{abstract}

%\thanks{2010 {\it Mathematics Subject Classification.} 20J06, 20E08, 55R35.}

\maketitle

%------------------------

\section{Introduction}\label{sec:Intro}  

Let $\Gamma$ be a discrete group. If $\Gamma$ has a finite $p$-subgroup $S$ such that every $p$-subgroup of $\Gamma$ is conjugate to a subgroup
of $S$, then $S$ is called a \emph{Sylow $p$-subgroup} of $\Gamma$.  The fusion system $\mathcal{F}_S(\Gamma)$ is defined as the category whose objects are subgroups of $S$, and whose morphisms are given by maps induced by conjugation by an element in $G$.  In general, a fusion system $\cF$ is a category whose objects are the subgroups of a finite $p$-group $S$ and whose morphisms are injective group homomorphisms satisfying certain properties. We say a fusion system $\cF$ is realized by a discrete group $\Gamma$, if $\Gamma$ has 
a Sylow $p$-subgroup $S$ such that $\cF=\cF_S(\Gamma)$. 

When a fusion system satisfies some further axioms that mimic Sylow theorems, it is called a \emph{saturated fusion system}.
Fusion systems realized by finite groups are the main examples of saturated fusion systems. There are exotic saturated fusion systems 
that are not realized by a finite group. However it has been shown independently by Leary and Stancu \cite{LearyStancu} and Robinson \cite{Robinson}  
that given a saturated fusion system $\cF$, there is always 
a discrete group $\pi$ that realizes $\cF$. These infinite group models are constructed as fundamental groups of certain graphs of groups. 
 
In this paper, we consider these infinite group constructions for a fusion system $\cF$ which is already realized
by a finite group $G$. We find these infinite group models interesting from the point of view of group cohomology and cohomology of categories, 
even in the case where $\cF$ is realized by a finite group.
The main aim of the paper is to prove a theorem which relates the cohomology 
of the fusion system $\cF$ of a group $G$ to the cohomology of an infinite group model $\pi$ that realizes $\cF$ and to provide an infinite family 
of examples where these two cohomology groups are not isomorphic.

Let $G$ be a finite group and $S$ be a Sylow $p$-subgroup of $G$. If there is subgroup $H \leq G$ which includes $S$ as Sylow $p$-subgroup such that $\cF_S(G)=\cF_S(H)$, then we say $H$ \emph{controls $p$-fusion in $G$}.  We say $G$ is \emph{$p$-minimal} if it has no proper subgroup
that controls $p$-fusion in $G$. Assume that $G$ is $p$-minimal and let $\pi$ be an infinite group realizing the fusion system $\cF_S(G)$
obtained by either the Leary-Stancu model or the Robinson model (using subgroups of $G$ as vertex groups, as described in Remark \ref{rem:RobinsonModel}).  Then we show that there is a surjective group homomorphism $\chi : \pi \to G$ whose kernel is a free group $F$ on which $G$ acts by conjugation (see Section \ref{sect:Storing} for details). 

The homomorphism $\chi :\pi \to G $ satisfies some extra properties that makes it a \emph{storing homomorphism} (see Definition \ref{def:Storing}).  In the case where there is a storing homomorphism $\chi: \pi \to G$ which takes a Sylow $p$-subgroup of $\pi$ to a Sylow $p$-subgroup of $G$, one can relate the mod-$p$ cohomology of $\pi$ to the mod-$p$ cohomology of $G$ via a direct sum decomposition (see Theorem \ref{thm:Storing}). As a consequence we obtain the following.

\begin{theorem}\label{thm:thmA}
Let $\cF =\cF_S(G) $ be a fusion system of a finite group $G$. Assume that $G$ is $p$-minimal, and let $\pi$ denote the infinite group realizing $\cF$ obtained by either the Leary-Stancu model or the Robinson model (as in Remark \ref{rem:RobinsonModel}).
Then there is a group extension $1 \to F \to \pi \to G \to 1$ where $F$ is a free group, and there is  an isomorphism of cohomology groups
$$H^{*-1} (G; \hom(F_{ab} , \bF_p ) )\oplus H^* (G; \bF_p) \cong H^* (\pi; \bF_p)$$ 
where $F_{ab}:=F/[F.F]$ denotes the abelianization of $F$.
\end{theorem}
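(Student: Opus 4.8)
The plan is to combine the structural results of Section~\ref{sect:Storing} with the Lyndon--Hochschild--Serre spectral sequence of the extension $1\to F\to\pi\to G\to 1$. First I would recall from Section~\ref{sect:Storing} that, under the $p$-minimality hypothesis, the map $\chi\colon\pi\to G$ is a surjective storing homomorphism whose kernel $F$ is a free group on which $G$ acts by conjugation, and that $\chi$ carries a Sylow $p$-subgroup of $\pi$ (namely $S\le\pi$) isomorphically onto a Sylow $p$-subgroup of $G$. This is precisely the hypothesis of Theorem~\ref{thm:Storing}, which already yields a direct sum decomposition of $H^*(\pi;\bF_p)$ having $H^*(G;\bF_p)$ as one summand; the remaining task is to identify the complementary summand with $H^{*-1}(G;\hom(F_{ab},\bF_p))$.

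For this I would consider the cohomological Lyndon--Hochschild--Serre spectral sequence
$$E_2^{s,t}=H^s\bigl(G;H^t(F;\bF_p)\bigr)\ \Longrightarrow\ H^{s+t}(\pi;\bF_p).$$
Since $F$ is free it has cohomological dimension $1$, so $H^t(F;\bF_p)=0$ for $t\ge 2$ and the spectral sequence is concentrated in the two rows $t=0,1$. Here $E_2^{s,0}=H^s(G;\bF_p)$, while $H^1(F;\bF_p)=\hom(F,\bF_p)=\hom(F_{ab},\bF_p)$ as an $\bF_p G$-module, the $G$-action being the one induced by the conjugation action of $G$ on $F$ (hence on $F_{ab}$); thus $E_2^{s,1}=H^s(G;\hom(F_{ab},\bF_p))$. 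Because only the rows $t=0$ and $t=1$ are nonzero, the single possibly nontrivial differential is $d_2\colon E_2^{s,1}\to E_2^{s+2,0}$.

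The key point is that $d_2=0$, which follows from the injectivity of the inflation map $\chi^*\colon H^*(G;\bF_p)\to H^*(\pi;\bF_p)$. Indeed, the composite $S\hookrightarrow\pi\xrightarrow{\chi}G$ is (conjugate to) the inclusion of a Sylow $p$-subgroup, so restriction to $S$ gives $\mathrm{res}^{\pi}_{S}\circ\chi^*=\mathrm{res}^{G}_{S}$ up to an isomorphism of $H^*(S;\bF_p)$, and $\mathrm{res}^{G}_{S}$ is injective because $[G:S]$ is prime to $p$. (Alternatively, the injectivity of $\chi^*$ can be read off directly from the splitting furnished by Theorem~\ref{thm:Storing}.) An injective inflation map forces every differential landing in the bottom row to vanish, so $d_2=0$ and the spectral sequence degenerates at $E_2$. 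Hence for each $n$ there is a short exact sequence
$$0\to H^n(G;\bF_p)\to H^n(\pi;\bF_p)\to H^{n-1}\bigl(G;\hom(F_{ab},\bF_p)\bigr)\to 0,$$
and since every term is an $\bF_p$-vector space this sequence splits, giving the asserted isomorphism of graded $\bF_p$-vector spaces.

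I expect the genuine difficulty of the theorem to lie not in this spectral sequence argument but in the input from Section~\ref{sect:Storing}: constructing the homomorphism $\chi$, verifying that its kernel is free (this is where the $p$-minimality of $G$ and the precise form of the Leary--Stancu and Robinson models enter), checking the storing axioms of Definition~\ref{def:Storing}, and proving Theorem~\ref{thm:Storing}. Within the present argument the only subtle point is the vanishing of $d_2$, i.e. the injectivity of inflation, which hinges on $\chi$ restricting to an isomorphism on Sylow $p$-subgroups; the identification of the $G$-module structure on $H^1(F;\bF_p)$ with $\hom(F_{ab},\bF_p)$ and the splitting of the resulting short exact sequence are routine.
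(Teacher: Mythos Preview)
Your argument is correct, but note a small framing issue: once you invoke Theorem~\ref{thm:Storing} there is no ``remaining task'' --- that theorem already identifies the complementary summand as $H^{*-1}(G;\hom(F_{ab},\bF_p))$. What your spectral-sequence argument actually provides is an independent proof of Theorem~\ref{thm:Storing} itself, not a supplement to it.

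The paper's route to the cohomological isomorphism is different. Rather than running the LHS spectral sequence of $1\to F\to\pi\to G\to 1$, it works geometrically: let $T$ be the Bass--Serre tree of $(\cG,Y)$ and set $X=T/F$, a connected $G$-graph. The cellular cochain sequence $0\to\bF_p\to C^0(X;\bF_p)\to C^1(X;\bF_p)\to H^1(X;\bF_p)\to 0$ is broken into two short exact sequences; the first splits because some vertex group contains a Sylow $p$-subgroup, and combining this (via Shapiro) with the long exact sequence of the second yields a long exact sequence whose middle terms coincide with those of the long exact sequence for the $\pi$-action on $T$. A five-lemma comparison gives $H^*(\pi;\bF_p)\cong H^*(G;\bF_p)\oplus H^{*-1}(G;H^1(X;\bF_p))$, and a separate lemma (Lemma~\ref{lem:ZGIsom}) identifies $H_1(X)\cong F_{ab}$ as $\bZ G$-modules. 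The paper itself notes in Remark~\ref{rem:splitting} that your LHS approach is a valid alternative. Your method is shorter and avoids the explicit graph $X$; the paper's method has the advantage of making the $G$-module $F_{ab}$ visible as the first homology of a concrete $G$-graph, which is useful for the examples in Section~\ref{sect:Examples}.
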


In \cite[Thm 1.1]{LibmanSeeliger}, Libman and Seeliger  considers the cohomology of an infinite model group $\pi$ when $\cF$ is any saturated fusion system. In this case there is a map $f: B\pi \to |\cL|_p ^\wedge$ from the classifying space of $\pi$ to the $p$-completion of the associated centric linking system $\cL$.
They showed that the $\bF_p$-algebra homomorphism $H^* ( |\cL |; \bF_p ) \to H^* (\pi; \bF_p)$ induced by the map $f$ splits, and the splitting is given by the restriction map $\res^{\pi} _S : H^*(\pi ; \bF_p) \to H^* (S; \bF_p)$. This gives an isomorphism 
$$\ker (\res ^\pi _S ) \oplus H^* (|\cL |; \bF_p) \cong H^* (\pi; \bF_p)$$ (see Section \ref{subsect:Realizing} for details). As a consequence of Theorem \ref{thm:thmA} we obtain that if $\cF$ is a fusion system realized by a finite group $G$ that is $p$-minimal,  then the kernel of the restriction map  $\res ^\pi _S$ is isomorphic to $H^{*-1} (G; \hom (F_{ab}; \bF_p ))$   

In Section \ref{sect:Examples} 
we give some examples of storing homomorphisms and show that the mod-$p$ cohomology of an infinite group 
constructed using the Leary-Stancu model is not in general isomorphic to the cohomology of the fusion system that it realizes. 
For the Robinson model, we show that the groups $G=GL(n,2)$ for $n\geq 5$ give infinitely many such examples.
 
\begin{theorem}\label{thm:thmB}
Let $G=GL(n,2)$ for $n\geq 5$, and let $S$ be the Sylow-$2$ subgroup consisting of upper triangular matrices in $G$. 
Suppose that $\pi$ is the infinite group realizing $\mathcal{F}_S(G)$ constructed using the  Robinson model. Then 
$H^2(G , \bF_2) \not \cong H^2(\pi , \bF_2)$.     
\end{theorem}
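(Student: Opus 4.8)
The plan is to reduce the statement, via Theorem~\ref{thm:thmA}, to the non-vanishing of a single cohomology group, and then to establish that non-vanishing from the graph-of-groups description of the Robinson model. First I would note that $G=GL(n,2)$ is $2$-minimal, so that Theorem~\ref{thm:thmA} applies: over $\bF_2$ the diagonal torus is trivial, so the Borel subgroup is the Sylow $2$-subgroup $S$ of upper triangular matrices, whence the only subgroups of $G$ containing $S$ are the parabolics; and a proper parabolic $P_J$ cannot control $2$-fusion, because for a node $m\notin J$ one has $P_J\cap Q_m=S$, where $Q_m$ is the rank-one parabolic at node $m$, so $\cF_S(P_J)$ omits the essential automorphisms of $O_2(Q_m)$. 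Theorem~\ref{thm:thmA} then gives an extension $1\to F\to\pi\to G\to1$ with $F$ free and $H^2(\pi;\bF_2)\cong H^1\!\bigl(G;\hom(F_{ab},\bF_2)\bigr)\oplus H^2(G;\bF_2)$. For $n\ge5$ the group $GL(n,2)=SL(n,2)$ is simple with trivial Schur multiplier (the exceptions being $n=3,4$), so by the universal coefficient theorem $H^2(G;\bF_2)=\hom(H_2(G;\bZ),\bF_2)\oplus\mathrm{Ext}^1_{\bZ}(H_1(G;\bZ),\bF_2)=0$; hence the assertion $H^2(G;\bF_2)\not\cong H^2(\pi;\bF_2)$ becomes simply $H^2(\pi;\bF_2)\neq0$, equivalently $H^1\!\bigl(G;\hom(F_{ab},\bF_2)\bigr)\neq0$.

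To detect $H^2(\pi;\bF_2)$ I would exploit the Bass--Serre structure of the Robinson model from Remark~\ref{rem:RobinsonModel}: $\pi=\pi_1(\mathcal{G},Y)$ with finite vertex groups $G_v$ and edge groups $G_e$ that are subgroups of $G$ — concretely the $\cF$-local subgroups $N_G(S)$ and $N_G(E)$ for fully normalized $\cF$-essential $E$, joined through the $N_G(S)$-vertex along the edge groups $N_S(E)$. The equivariant cohomology spectral sequence of the action of $\pi$ on its Bass--Serre tree gives a natural short exact sequence
\[
0\longrightarrow\operatorname{coker}(\beta)\longrightarrow H^2(\pi;\bF_2)\longrightarrow\ker(\gamma)\longrightarrow0,
\]
where $\beta\colon\bigoplus_v H^1(G_v;\bF_2)\to\bigoplus_e H^1(G_e;\bF_2)$ and $\gamma\colon\bigoplus_v H^2(G_v;\bF_2)\to\bigoplus_e H^2(G_e;\bF_2)$ are assembled from restriction maps. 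Since $N_G(S)=S$ and restriction to a Sylow $2$-subgroup is injective on mod-$2$ cohomology, $\ker(\gamma)$ is precisely the group of $2$-dimensional $\cF$-stable elements, so (using Alperin's fusion theorem and the Cartan--Eilenberg stable-element theorem) $\ker(\gamma)\cong H^2(\cF;\bF_2)\cong H^2(G;\bF_2)$; comparing with Theorem~\ref{thm:thmA} identifies $\operatorname{coker}(\beta)$ with $H^1\!\bigl(G;\hom(F_{ab},\bF_2)\bigr)$. Thus it suffices to show $\operatorname{coker}(\beta)\neq0$.

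For this I would use the explicit local structure of $\cF_S(GL(n,2))$: here $N_G(S)=S$ with $\dim_{\bF_2}H^1(S;\bF_2)=n-1$; the unipotent radicals $V_1,\dots,V_{n-1}$ of the rank-one parabolics $Q_1,\dots,Q_{n-1}$ are pairwise non-$\cF$-conjugate essential subgroups, with $N_G(V_m)=Q_m$, Levi quotient $GL(2,2)\cong S_3$, and $N_S(V_m)=S$ (as $S$ normalizes $O_2(Q_m)$), so every edge attached to these has edge group all of $S$; and $Q_m^{ab}$ is an extension of $S_3^{ab}=\bZ/2$ by the $GL(2,2)$-coinvariants of $V_m^{ab}$, which one reads off from the root subgroups as a direct sum of trivial modules of explicitly computable rank, thereby bounding $\dim_{\bF_2}H^1(Q_m;\bF_2)$. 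Carried through, the dimension count over these data makes the source of $\beta$ smaller than its target once $n$ is large; and any remaining ("subfield") essential subgroup $E$, being fully normalized, contributes a vertex $N_G(E)$ and an edge group $N_S(E)\in\mathrm{Syl}_2(N_G(E))$, so by injectivity of restriction to a Sylow $2$-subgroup on $H^1(-;\bF_2)$ it enlarges the target of $\beta$ by at least as much as the source and cannot destroy the inequality. Hence $\operatorname{coker}(\beta)\neq0$ for $n\ge5$, giving $H^2(\pi;\bF_2)\neq0=H^2(G;\bF_2)$.

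I expect the main obstacle to be this last step: one must pin down the $\cF$-essential subgroups of $\cF_S(GL(n,2))$ and the isomorphism types of their normalizers precisely enough to run the dimension count, and in particular to see that the requisite slack appears only from $n\ge5$ on — the count is genuinely tight for small $n$, and for $n=3$ it yields $\operatorname{coker}(\beta)=0$, so there $H^2(\pi;\bF_2)\cong H^2(G;\bF_2)$ and the conclusion fails, which is why $n=3$ is excluded. The remaining ingredients — exactness of the Bass--Serre sequence, the identification of $\ker(\gamma)$ with stable elements, injectivity of $\res$ to a Sylow subgroup on $H^1$, and the abelianization computations — are routine once the local subgroups are listed.
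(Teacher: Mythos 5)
Your overall strategy coincides with the paper's: establish $H^2(G;\bF_2)=0$ for $n\ge 5$ (the paper cites the known Schur multipliers of $SL(n,2)$, as you do), and then show $H^2(\pi;\bF_2)\neq 0$ by proving that the $H^1$-level map in the Mayer--Vietoris sequence of the graph of groups fails to be surjective, so that its cokernel injects into $H^2(\pi;\bF_2)$. (The detour through Theorem~\ref{thm:thmA}, $2$-minimality, and the identification of $\ker(\gamma)$ with stable elements is harmless but unnecessary for this.) However, there are two genuine problems. First, you are working with the wrong graph of groups. By Remark~\ref{rem:RobinsonModel} the Robinson model in the theorem has leaf vertex groups $N_G(P)$ for \emph{all} fully normalized, $\cF$-radical, $\cF$-centric $P$, which by Theorem~\ref{U is F-alperin iff NU is parabolic} means \emph{all} $2^{n-1}-1$ parabolic subgroups containing $S$, each joined to the central $S$-vertex by an edge with edge group $S$. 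You instead take only the $\cF$-essential subgroups, i.e.\ the $n-1$ minimal parabolics $Q_m$. That is a different group $\pi$, and a deficiency count for your star graph does not transfer automatically to the one in the statement: with $2^{n-1}-1$ leaves the source of $\beta$ has one extra $(n-1)$-dimensional summand over the target, so one must exhibit total deficiency strictly greater than $n-1$ across \emph{that} list of vertex groups.

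Second, the decisive quantitative step is exactly the one you defer: you assert that ``carried through, the dimension count makes the source smaller than the target,'' but you never produce the deficiency. The paper gets it from a different and much cleaner source than the minimal parabolics: the two maximal parabolics $N_1\cong N_{n-1}\cong C_2^{n-1}\rtimes GL(n-1,2)$ (line and hyperplane stabilizers) satisfy $H^1(N_i;\bF_2)=0$ outright --- any homomorphism to $C_2$ kills the simple group $GL(n-1,2)$ and then, by transitivity of $GL(n-1,2)$ on nonzero vectors, would have to take the value $1$ on $a$, $b$, and $a+b$, a contradiction. Each of these contributes deficiency $n-1$, so the total is at least $2(n-1)>n-1$ and $\beta$ is not surjective. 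Your proposed route via the $Q_m$ can probably be made to work (the neighbouring simple root subgroups $x_{\alpha_{m\pm1}}$ die in $Q_m^{\mathrm{ab}}$ because they sit inside natural $GL(2,2)$-modules, which have zero coinvariants --- note $V_m^{\mathrm{ab}}$ is \emph{not} a sum of trivial modules as you state), but as written the argument stops exactly where the proof has to begin. Also, the exclusion of $n=3,4$ is forced by $H^2(G;\bF_2)\neq 0$ there, not by tightness of the $H^1$ count.
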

 
In Section \ref{sect:GroupActions}, we consider finite group actions on graphs and show that under certain conditions 
group actions on graphs can be used to obtain infinite group models realizing fusion systems. For a finite group $G$ 
with $p$-rank equal to 2, we introduce a new infinite group
model whose vertex groups are normalizers of elementary abelian $p$-subgroups (see Theorem \ref{thm:Rank2}).

In Section \ref{sect:Signalizer}, we discuss signalizer functors $P \to \Theta (P)$ for infinite group models (see Definition \ref{def:Signalizer} for a definition of signalizer functor).
In the case where $\cF$ is the $p$-fusion system of a finite group $G$, we calculate the signalizer functors in terms of normalizers in the kernel of the storing homomorphism $\chi : \pi \to G$. 
For arbitrary fusion systems we show that for every $\cF$-centric $P$, the mod-$p$ homology of the group $\Theta (P)$ is zero in dimensions 
greater than $1$ (see Proposition \ref{pro:Homology}). In dimension 1 the homology group functor $P\to H_1 (\Theta (P); \bF_p)$ defines an $\bF_p \cL$-module. We denote this module by $H_1 \Theta$.  As a consequence of the vanishing of homology groups of $\Theta(P)$ at dimensions greater than 1, 
we obtain the following theorem.

\begin{theorem}\label{thm:thmC} Let $\cT:=\cT_S^c (\pi)$ denote the transporter category for an infinite group model $\pi$ defined on the $\cF$-centric subgroups of $S$, and let $\cL$ be the associated linking system defined by a signalizer functor $P \to \Theta (P)$. Then for every $\bF_p \cL$-module $M$, there is a long exact sequence $$ \cdots \to H^{n-1} (\cT; q^*M ) \to  \ext ^{n-2}_{R\cL} ( H_1\Theta , M ) \to  H^n ( \cL ; M) \to H^n (\cT ; q^* M) \to $$ $$\ext ^{n-1}_{R\cL} ( H_1 \Theta , M ) \to H^{n+1} (\cL ; M) \to \cdots$$
where $q^* M$ denote the $\bF_p \cT$-module obtained from $M$  via the quotient functor $q:\cT \to \cL$.
\end{theorem}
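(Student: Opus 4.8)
The plan is to identify the three kinds of groups in the statement with the terms of a spectral sequence attached to the quotient functor $q\colon\cT\to\cL$, and then to prove that this spectral sequence has only two nonzero rows, so that it collapses to the asserted long exact sequence.

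Since $q$ is the identity on objects and surjective on each morphism set, with $\mor_\cL(P,Q)=\mor_\cT(P,Q)/\Theta(P)$, the restriction functor $q^{*}$ on module categories is exact. Its left adjoint $q_{!}$ (the left Kan extension along $q$) therefore has a total left derived functor $Lq_{!}$, and the adjunction descends to the derived categories:
\[ \mathrm{RHom}_{R\cT}\bigl(\underline{\bF}_p,\,q^{*}M\bigr)\ \cong\ \mathrm{RHom}_{R\cL}\bigl(Lq_{!}\underline{\bF}_p,\,M\bigr), \]
where $\underline{\bF}_p$ is the constant functor and $R=\bF_p$. Taking cohomology, and using $H^{*}(\cT;q^{*}M)=\ext^{*}_{R\cT}(\underline{\bF}_p,q^{*}M)$, one obtains a hyper-$\ext$ spectral sequence
\[ E_2^{s,t}=\ext^{s}_{R\cL}\!\bigl(L_{t}q_{!}\underline{\bF}_p,\ M\bigr)\ \Longrightarrow\ H^{s+t}(\cT;q^{*}M). \]
Thus everything reduces to computing the $\bF_p\cL$-modules $L_{t}q_{!}\underline{\bF}_p$.

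The key claim is that $L_{0}q_{!}\underline{\bF}_p\cong\underline{\bF}_p$, that $L_{1}q_{!}\underline{\bF}_p\cong H_1\Theta$, and that $L_{t}q_{!}\underline{\bF}_p=0$ for $t\ge 2$. By the standard description of derived left Kan extensions, $(L_{t}q_{!}\underline{\bF}_p)(Q)=H_{t}(q/Q;\bF_p)$, the homology of the nerve of the comma category $q/Q$ whose objects are the pairs $(P,\varphi)$ with $\varphi\in\mor_\cL(P,Q)$. To compute this I would exploit the factorization available in a linking system: every morphism $P\to Q$ of $\cL$ is uniquely an isomorphism $P\xrightarrow{\ \cong\ }R$ onto an $\cF$-centric subgroup $R\le Q$ followed by the inclusion $R\le Q$. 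Assigning to an object of $q/Q$ this image subgroup $R$ defines a functor from $q/Q$ to the poset of $\cF$-centric subgroups of $Q$; the objects with a fixed image $R$ form a connected groupoid with automorphism groups isomorphic to $\Theta(R)$, hence an $\bF_p$-homology copy of $B\Theta(R)$. Running the resulting spectral sequence, and using that this poset has a greatest element $Q$ (so that colimits over it are exact and its higher derived colimits vanish), one gets $H_{t}(q/Q;\bF_p)\cong H_{t}(\Theta(Q);\bF_p)$; by Proposition~\ref{pro:Homology} this is $\bF_p$ for $t=0$, equals $H_1(\Theta(Q);\bF_p)=H_1\Theta(Q)$ for $t=1$, and vanishes for $t\ge2$. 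One then checks that these isomorphisms are natural in $Q$ and identify $L_{1}q_{!}\underline{\bF}_p$ with the $\bF_p\cL$-module $H_1\Theta$ introduced before the statement. This homology computation for the comma categories — the bookkeeping with $q/Q$, together with the naturality and the matching of module structures — is the step I expect to be the main obstacle.

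Granting the claim, the spectral sequence above has nonzero rows only for $t=0,1$, with $E_2^{s,0}=\ext^{s}_{R\cL}(\underline{\bF}_p,M)=H^{s}(\cL;M)$ and $E_2^{s,1}=\ext^{s}_{R\cL}(H_1\Theta,M)$, and its only possibly nonzero differential is $d_2\colon E_2^{s,1}\to E_2^{s+2,0}$. The exact sequence of low-degree terms of such a two-row spectral sequence is precisely the long exact sequence in the statement, with the map $H^{n}(\cL;M)\to H^{n}(\cT;q^{*}M)$ being $q^{*}$ and the map $\ext^{n-2}_{R\cL}(H_1\Theta,M)\to H^{n}(\cL;M)$ being $d_2$. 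Equivalently, and without spectral sequences: since $Lq_{!}\underline{\bF}_p$ has homology concentrated in degrees $0$ and $1$, it fits into a distinguished triangle $H_1\Theta[1]\to Lq_{!}\underline{\bF}_p\to\underline{\bF}_p\to H_1\Theta[2]$ in the derived category of $\bF_p\cL$-modules, and applying $\mathrm{RHom}_{R\cL}(-,M)$ and passing to cohomology yields the same long exact sequence.
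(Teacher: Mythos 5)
Your overall architecture coincides with the paper's: both arguments run the hyper-$\ext$ spectral sequence coming from the adjunction for $q$, identify its rows with $\ext^{*}_{R\cL}(H_t(\Theta(-);\bF_p),M)$, and invoke Proposition~\ref{pro:Homology} to kill everything above the first row, so that the two-row collapse yields the stated long exact sequence. The gap is in the step you yourself flagged: the identification of $L_tq_!\underline{\bF}_p$. First, you are using the wrong comma category. Since $R\cL$-modules are \emph{contravariant} functors, the left adjoint of $q^{*}$ is the left Kan extension along $q^{op}$, and its pointwise formula involves the comma category $(Q\downarrow q)$ of pairs $(P,\varphi)$ with $\varphi\in\mor_{\cL}(Q,P)$ --- morphisms \emph{out of} $Q$ --- not the category of morphisms into $Q$ that you describe. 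Source-regularity is an asymmetric condition ($\Theta(P)$ acts freely on morphisms with \emph{source} $P$), and it is exactly the out-of-$Q$ comma category that it controls. Second, even for the category you do describe, the collapse ``the poset of images has greatest element $Q$, hence the homology is $H_t(\Theta(Q))$'' is not justified: for the nerve of the total category to be the homotopy colimit of the fiber groupoids indexed by that poset (so that a greatest element would force the collapse), the image functor would have to be a Grothendieck opfibration, and it is not --- what it naturally admits are cartesian lifts (restriction along inclusions), which index the homotopy colimit by the \emph{opposite} poset, where $Q$ is initial rather than terminal and no collapse occurs.

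With the correct comma category the computation is immediate and needs neither the iso-followed-by-inclusion factorization in $\cL$ nor any poset decomposition: $(Q\downarrow q)$ is the quotient of the under-category $Q\backslash\cT$ --- which has initial object $\mathrm{id}_Q$ and hence contractible nerve --- by the free action of $\Theta(Q)$ by precomposition, so its nerve is a $K(\Theta(Q),1)$ and $H_t(Q\downarrow q;\bF_p)\cong H_t(\Theta(Q);\bF_p)$ naturally in $Q$; Proposition~\ref{pro:Homology} then gives the vanishing for $t\geq 2$. This is precisely what the paper's proof of Proposition~\ref{pro:Spectral} does at the level of the bar resolution: $C'_{*}(x)=C_{*}(x)/K(x)$, where $C_{*}(x)$ is a contractible complex of free $RK(x)$-modules. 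Once you make this correction, the remainder of your argument (the two-row spectral sequence, or equivalently the distinguished triangle $H_1\Theta[1]\to Lq_!\underline{\bF}_p\to\underline{\bF}_p$) goes through and agrees with the paper.
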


%A similar theorem holds for the cohomology of the orbit category of a fusions system. This has some interesting consequences related to the sharpness problem for subgroup decomposition for a centric linking system.  We will investigate this direction further in another paper. 

The paper is organized as follows: In Section \ref{sect:Prelim} we give necessary definitions and preliminary 
results that we use in the rest of the paper. In Section \ref{sect:Storing} we consider graphs of groups with a storing 
homomorphism and prove Theorem \ref{thm:thmA}. In Section \ref{sect:Examples} we consider some examples 
of storing homomorphisms and prove Theorem \ref{thm:thmB}. In Section \ref{sect:GroupActions}, we consider group actions on 
graphs and show that in some cases a group action on a graph can induce an infinite group model realizing the fusion system 
of the group. In Section \ref{sect:Signalizer} we discuss signalizer functors for infinite group models and prove 
Theorem \ref{thm:thmC}.

\vskip 5pt

\noindent 
{\bf Acknowledgement:}  The second author is supported by a T\" ubitak 1001 project (grant no. 116F194).

%----------------

\section{Definitions and preliminary results}\label{sect:Prelim}

In this section we introduce necessary definitions and preliminary results for the rest of the paper. The readers familiar with
fusion systems and graphs of groups can skip most of this section. The standard reference for definitions on fusion systems 
is \cite{Craven} and for graphs of groups is \cite{Serre}. 

\subsection{Fusion systems}\label{subsect:Fusion}
 
Let $S$ be a finite $p$-group. A fusion system $\cF$ on $S$ is a category whose objects are 
the subgroups $P \leq S$, and for every $P,Q \leq S$ the morphism set $\hom_\cF (P,Q)$ consists of injective 
group homomorphisms $P\to Q$ with following properties:
 
\begin{itemize}
\item[(i)] For all $P,Q\leq S$ we have $\hom_S(P,Q)\subseteq \hom _{\cF} (P,Q)$, where $\hom_S(P,Q)$ is the set of all homomorphisms $c_s: P\to Q$ induced by conjugation with elements $s \in S$.
\item[(ii)]For any morphism  $\phi \in \hom _{\cF} (P, Q)$, the isomorphism $\phi:P \rightarrow \phi(P)$, and its inverse $\phi^{-1} : \phi(P) \to P$, are morphisms in $\cF$. 
\end{itemize}

Let $\Gamma$ be a discrete (possibly infinite) group, and let $S$ be a finite $p$-subgroup of $\Gamma$. We say $S$ is a \emph{Sylow $p$-subgroup} of $\Gamma$
if for every $p$-subgroup $P \leq \Gamma$ there is a $g\in \Gamma$ such that $gPg^{-1} \leq S$. When $S$ is a Sylow $p$-subgroup of
a discrete group $\Gamma$, the fusion system $\cF_S(\Gamma)$ is defined as the fusion system on $S$ whose morphisms $P\to Q$ are defined as the set of all maps $c_g: P \to Q$ induced by conjugations by elements in $\Gamma$.   If a fusion system satisfies some additional axioms it is called a \emph{saturated} fusion system (see \cite[Def 1.37]{Craven} for a definition). If $G$ is finite group with a Sylow $p$-subgroup $S$, then the fusion system $\cF_S(G)$ is saturated. 

Let $\cF$ be a fusion system, and let $P$ and $Q$ be two subgroups in $S$. If there is an isomorphism $f: P \to Q$ in $\cF$, then we say 
$P$ and $Q$ are \emph{$\cF$-conjugate} and denote this by $P \sim _{\cF} Q$. A subgroup $Q \leq S$ is called \emph{fully $\cF$-normalized} if $|N_S(Q)| \geq |N_S(R)|$ for every $R\leq S$ with $Q \sim _{\cF} R$. A subgroup $Q \leq S$ is called \emph{fully $\cF$-centralized} if $|C_S(Q)| \geq |C_S(R)|$ for every $R\leq S$ with $Q \sim _{\cF} R$. We say $Q$ is \emph{$\cF$-centric} if $C_S(R) \leq R$ for every $R \sim _{\cF} Q$. A subgroup $Q\leq S$ is called \emph{$\cF$-radical} if $O_p(\Aut _{\cF} (Q)) =\Inn (Q) $, where $O_p(G)$ denotes the largest normal $p$-subgroup in a group $G$.

If $\cF=\cF_S(G)$ for a finite group $G$ with a Sylow $p$-subgroup $S$, then $P\leq S$ is fully normalized in $\cF$ if and only if $N_S(P)$ is a Sylow $p$-subgroup of $N_G(P)$ (see \cite[Prop 1.38]{Craven}). A subgroup $P \leq S$ is $\cF$-centric if and only if $Z(P)$ is the Sylow $p$-subgroup of $C_G(P)$ (see \cite[Prop 4.43]{Craven}). In this case we have $C_G(P)=Z(P)\times C_G'(P)$ where $C'_G(P):=O_{p'} (C_G(P))$ denotes the largest normal subgroup of $C_G(P)$ whose order is coprime to $p$. A $p$-subgroup $P$ in $G$ is called \emph{$p$-centric} if it satisfies this property.

We say a $p$-subgroup $P$ of $G$ is \emph{$p$-radical} if $O_p(N_G(P)/P)=1$.  Note that a subgroup $P\leq S$ is $\cF_S(G)$-radical if and only if $O_p(N_G(P)/PC_G(P))=1$. Hence in general being $p$-radical and $\cF_S(G)$-radical are different conditions. However, the following holds.

\begin{lemma}\label{lem:CentricRadical} Let $G$ be finite group with a Sylow $p$-subgroup $S$, and let $P$ be a subgroup of $S$. If $P$ is $\cF_S(G)$-centric and $\cF_S(G)$-radical, then $P$ is $p$-centric and $p$-radical. In general the converse does not hold.
\end{lemma}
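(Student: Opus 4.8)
The plan is to separate the two implications, and within each the ``centric'' and ``radical'' parts. The centric part carries no content: as recalled above (following \cite[Prop~4.43]{Craven}), $P$ is $\cF_S(G)$-centric if and only if $Z(P)$ is the Sylow $p$-subgroup of $C_G(P)$, which is precisely the definition of $P$ being $p$-centric; so ``$\cF_S(G)$-centric'' and ``$p$-centric'' are the same condition, and in particular each implies the other. Thus the forward implication amounts to: if $P$ is $p$-centric and $\cF_S(G)$-radical, then $P$ is $p$-radical.

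To prove this I would put $N=N_G(P)$ and use the decomposition $C_G(P)=Z(P)\times C'_G(P)$ valid for $p$-centric $P$, where $C'_G(P)=O_{p'}(C_G(P))$. Since $C_G(P)\trianglelefteq N$ and $C'_G(P)$ is characteristic in $C_G(P)$, the group $C'_G(P)$ is a normal $p'$-subgroup of $N$; and since $Z(P)\leq P$ we have $PC_G(P)=P\,C'_G(P)$, so $PC_G(P)/P\cong C'_G(P)$ is a normal $p'$-subgroup of $N/P$ and the canonical surjection $\eta\colon N/P\twoheadrightarrow N/PC_G(P)$ has kernel of order prime to $p$. Now $\eta\big(O_p(N/P)\big)$ is a normal $p$-subgroup of $N/PC_G(P)$, hence contained in $O_p(N/PC_G(P))$, which is trivial because $P$ is $\cF_S(G)$-radical (using the characterization $\cF_S(G)$-radical $\Leftrightarrow O_p(N_G(P)/PC_G(P))=1$ recalled above). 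Therefore $O_p(N/P)\leq\ker\eta$, a $p'$-group, so $O_p(N/P)=1$ and $P$ is $p$-radical.

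For the converse I would exhibit a small explicit example; the key design point is to make $C'_G(P)\neq1$, since exactly then can the two radicality conditions come apart. Take $p=2$, let $G=D_{24}=\langle r,s\mid r^{12}=s^2=1,\ srs=r^{-1}\rangle$ be the dihedral group of order $24$, let $S=\langle r^3,s\rangle\cong D_8$ be a Sylow $2$-subgroup, and let $P=\langle r^3\rangle\cong\bZ/4$. A direct computation gives $C_G(P)=\langle r\rangle\cong\bZ/12$ and $N_G(P)=G$, so $Z(P)=P$ is the Sylow $2$-subgroup of $C_G(P)$ and $P$ is $2$-centric (equivalently $\cF_S(G)$-centric), with $C'_G(P)\cong\bZ/3$; moreover $N_G(P)/P\cong S_3$, so $O_2(N_G(P)/P)=1$ and $P$ is $2$-radical. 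On the other hand $\Aut_{\cF_S(G)}(P)=\Aut_G(P)=N_G(P)/C_G(P)\cong\bZ/2$ is a nontrivial $2$-group while $\Inn(P)=1$, so $O_2(\Aut_{\cF_S(G)}(P))\neq\Inn(P)$ and $P$ is not $\cF_S(G)$-radical. Hence $P$ is $p$-centric and $p$-radical but fails to be $\cF_S(G)$-radical.

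The forward direction is essentially definition-chasing once the splitting $C_G(P)=Z(P)\times C'_G(P)$ is available, so the only step requiring an idea is the counterexample. The phenomenon to have in mind is that the odd-order factor $C'_G(P)$ of the centralizer can absorb a nontrivial $p$-subgroup of outer automorphisms of $P$ induced by $G$, so that on passing from $N_G(P)/P$ to the further quotient $N_G(P)/PC_G(P)$ a normal $p$-subgroup appears that was not present before.
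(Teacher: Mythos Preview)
Your proof is correct and follows essentially the same approach as the paper: the centric equivalence is noted as already established, the radical implication is proved via the surjection $N_G(P)/P\to N_G(P)/PC_G(P)$ with $p'$-kernel (the paper phrases this contrapositively, but the content is identical), and your counterexample $G=D_{24}$ with $P$ the normal cyclic subgroup of order~$4$ is exactly the one the paper uses.
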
 

\begin{proof} We have seen above that $P$ is $\cF_S(G)$-centric if and only if $P$ is $p$-centric. Assume that $P$ is not $p$-radical. Then there is a $p$-subgroup $Q$ of $N_G(P)$ such that $P \lhd Q \lhd N_G(P)$. Since $P$ is $p$-centric, $C_G(P)=Z(P)\times C_G'(P)$, hence  $PC_G(P) =PC_G'(P) $. Since $C'_G(P)$ has order coprime to $p$, we have $Q\cap C_G'(P)=1$. This gives that $QC_G' (P) /PC'_G(P)$ is a nontrivial normal $p$-subgroup in $N_G(P)/PC'_G(P)$. Hence $P$ is not $\cF_S(G)$-radical. 

To see that the converse that does not hold, let $G=D_{24}$ be the dihedral group of order 24 and let $P=O_2(G)$ be the normal cyclic subgroup of order $4$. The centralizer of $P$ in $G$ is the cyclic subgroup of order 12, so $P$ is $2$-centric. Since $N_G(P)/P\cong S_3$, $P$ is $2$-radical. However $P$ is not $\cF_S(G)$-radical since $N_G(P)/ PC_G(P)\cong C_2$ (see \cite[pg 11]{AKO}).
\end{proof}

Given a $p$-group $S$, the largest fusion system on $S$ is the system where morphisms from $P$ to $Q$ are all injective
homomorphisms $f: P \to Q$. This fusion system is denoted by $\cF^{\max} _S$. In general $\cF_S^{\max}$ is not a saturated fusion system.
The fusion system \emph{generated} by a collection of morphisms $\{ f_i : P_i \to Q_i \}$ is defined as the smallest subfusion system of $\cF_S ^{\max}$ that includes all the morphisms $f_i$. We denote this fusion system by $\langle f_i \, | \, i=1,\dots, n \rangle$. 

Alperin's theorem for fusion systems states that 
if $\cF$ is a saturated fusion system, then $\cF$ is generated by $\cF$-automorphisms of fully normalized, $\cF$-radical, $\cF$-centric subgroups of $S$ (see \cite[Thm 4.52]{Craven}).  

%----------------

\subsection{Graphs of groups}\label{subsect:GraphGroups} 

A graph $\Gamma$ consists of two sets $E(\Gamma)$ and  $V(\Gamma)$, called the edges and vertices 
of $\Gamma$, an involution on $E(\Gamma)$ which sends $e$ to $\bar{e}$ where $e\neq \bar{e}$, and two maps 
$o, t :E (\Gamma) \rightarrow V(\Gamma)$ which satisfy $t(e)=o( \bar{e})$. Each edge $e$ is considered as an oriented edge, 
with origin $o(e)$ and terminus $t(e)$. The pair $\{ e, \bar{e}\}$ is called an unoriented edge. 

\begin{definition}
A graph of groups $(\mathcal{G}, Y)$ consists of a connected  nonempty graph $Y$ together with a function 
$\cG$ assigning \\ (i) to each vertex $v$ of $Y$ a group $G_v$ and 
to each edge $e$ of $Y$ a group $G_e$, such that $G_{\bar{e}}=G_e$ for all $e$,   and
\\ (ii) to each edge $e$, a monomorphism 
$\phi_e:G_e\rightarrow G_{t(e)}$. 
\end{definition}

The fundamental group of a graph of groups is a group that can be described by giving a presentation. Let 
$E$ denote the free group with a basis given by the edges of $Y$. For this presentation we denote the edges of $Y$ by $y$ 
and write $a^y$ for the image of $a \in G_y$ under the monomorphism $\phi _y$. Let $F(\cG, Y)$ denote the quotient group of the free 
product $$E\ast (\bigast _{v\in V(Y)} G_v)$$ by the normal subgroup $N$, where $N$ is the normal closure of the relations 
$$y a ^y y^{-1}=a^{\bar{y}} \ \text{  and   }  \bar{y}=y^{-1}$$
for all $y\in E(Y)$ and $a\in G_y$. Let $T$ be a maximal tree in $Y$, then we define the group $\pi (\cG, Y, T)$ to be the quotient group of $F(\cG, Y)$
subject to the relations $y=1$ if $y\in E(T)$. It can be shown that the isomorphism class of $\pi(\cG, Y, T)$ does 
not depend on the maximal tree $T$ that is chosen (see \cite[Proposition 20]{Serre}). We call the group 
$\pi(\cG, Y, T)$, \emph{the fundamental group of $(\cG, Y)$}, and denote it by $\pi (\cG, Y)$.

There is also a topological description of the fundamental group of a graph of groups as the fundamental group of 
a topological space. For a discrete group $G$, let $BG$ denote the classifying space of $G$.  For each edge $e$, 
there is a continuous map $B\phi_e : BG_e \to BG_{t(e) }$ induced by the group homomorphism $\phi_e : G_e \to G_{t(e)}$. 

\begin{definition} The \emph{total space $X(\cG, Y)$} of the graph of groups $(\cG, Y)$ is defined as the quotient space of 
$$ \Bigl (  \coprod _{v\in V(Y)} BG_v \Bigr ) \coprod \Bigl (  \coprod _{e\in E(Y)} (BG_e \times [0,1])  \Bigr )$$ 
by the identifications
$$BG_e\times[0,1]\rightarrow BG_{\bar{e}}\times[0,1] \ \text{  by }  (x,t)\mapsto(x,1-t)$$
and
$$BG_e\times\{1\}\rightarrow BG_{t(e)}\    \text{ by }  (x,1)\mapsto (B\phi_e)(x).$$
\end{definition}

Using van Kampen's theorem and some other arguments, it is possible to show that the fundamental group of $X(\cG, Y)$ is isomorphic
to the group $\pi (\cG, Y)$ defined above (see \cite[Prop 23, pg 204]{Cohen}). The space $X(\cG, Y)$ has a contractible universal covering, 
so it is a  classifying space for the group $\pi (\cG, Y)$ (see \cite[Thm 22]{LearyStancu}).

\begin{example} Two well-known examples of graph of groups are amalgamations and HNN-extensions. If $Y$ is a graph with 
one unoriented edge and two distinct vertices, and if $A$ and $B$ are the vertex groups and $C$ is the edge group with 
two monomorphism $A\hookleftarrow C\hookrightarrow B$, denoted by $i_A$ and $i_B$, then the fundamental group
$\pi$ is the amalgamated product $$A*_C B :=\langle A * B\, |\, i_A (c) i_B (c)^{-1}, \text{ for all } c \in C \rangle.$$  
For the HNN-extension, we take the graph $Y$ as a graph with one unoriented edge and one vertex, i.e. the graph is just a loop. 
If the vertex group is $A$, the edge group is $C$, and its monomorphism is the identity 
embedding $C\hookrightarrow A$ and $\phi:C\hookrightarrow A$, we obtain the HNN-extension 
$$A*_C := \langle A,t\, |\, tct^{-1}=\phi(c), \text{  for all }  c\in C \rangle.$$ 
When $Y$ is a finite graph  as it is assumed to be throughout this paper, one can express the fundamental
group $\pi$ as a finite sequence of amalgamations and HNN-extensions. Because of this it is important to understand these two examples
of graph of groups.  \qed
\end{example}

One of the key properties of graphs of groups is that vertex groups $G_v$ embed into the fundamental group of graphs of groups.

\begin{lemma}[Lemma 19, pg. 200, \cite{Cohen}] Let $(\cG, Y)$ be a graph of groups, let $Z$ be a connected subgraph of $Y$. 
Then the natural homomorphism $\pi (\cG|_Z, Z) \to \pi (\cG, Y)$ is a monomorphism. In particular, for any vertex $v$ of $Y$, the natural
homomorphism $i_v: G_v \to \pi (\cG, Y)$ is a monomorphism.  
\end{lemma}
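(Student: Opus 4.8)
The plan is to realise $\pi(\cG, Y)$ as a group acting on a tree and to deduce both injectivity statements from the description of the vertex stabilizers of this action; this is the content of the structure theorem of Bass--Serre theory. Concretely, I would build the \emph{Bass--Serre tree} $T$ on which $\pi := \pi(\cG, Y)$ acts, for instance from the universal cover $\widetilde{X}$ of the total space $X(\cG, Y)$: collapsing each lift of a vertex space $BG_v$ to a point and each lift of an edge cylinder $BG_e \times [0,1]$ to an edge produces a graph $T$ carrying a $\pi$-action with quotient $Y$. The structural facts to establish are that $T$ is in fact a tree (connected and without loops), that the stabilizer of the lift $\tilde v$ of a vertex $v$ is the image $i_v(G_v)$, and that the natural map $G_v \to \Stab_\pi(\tilde v)$ is an isomorphism. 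The injectivity of $i_v: G_v \to \pi$ is exactly the injectivity of this last map, so the special case of the lemma is immediate once the structure theorem is in hand.

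To prove the structural facts I would induct on the number of unoriented edges of $Y$. The base case is a single vertex, where $\pi = G_v$ and there is nothing to prove. Adding one unoriented edge presents $\pi$ either as an amalgamated product $A *_C B$ (when the edge lies in a chosen maximal tree $T_0$ of $Y$) or as an HNN-extension $A *_C$ (when it does not), relative to the fundamental groups $A$, $B$ of the graphs of groups on the smaller graph. In the amalgam case the injectivity of $A \to A *_C B$ and $B \to A *_C B$ follows from the normal form theorem for amalgamated products; in the HNN case the injectivity of $A \to A *_C$ follows from Britton's Lemma. Since each vertex group $G_v$ of the smaller graph already embeds into $A$ (or $B$) by the inductive hypothesis, and these embed into $\pi$ by the two base constructions, composing gives $i_v(G_v) \hookrightarrow \pi$ for every vertex $v$.

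For a connected subgraph $Z \subseteq Y$, I would extend a maximal tree of $Z$ to a maximal tree $T_0$ of $Y$. Then the Bass--Serre tree $T_Z$ of $(\cG|_Z, Z)$ sits inside $T$ as a $\pi(\cG|_Z, Z)$-invariant subtree, its vertices and edges being the lifts of the vertex and edge spaces of $Z$ inside $\widetilde{X}$, and the $\pi(\cG|_Z, Z)$-action on $T_Z$ agrees with the restriction of the $\pi$-action. An element in the kernel of $\pi(\cG|_Z, Z) \to \pi$ would then act trivially on $T_Z$; since the action of $\pi(\cG|_Z, Z)$ on its own Bass--Serre tree is faithful (it has trivial global stabilizer), such an element must be trivial, giving the general injectivity statement.

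The main obstacle is the structure theorem itself, namely verifying that the constructed graph $T$ is genuinely a tree and that the vertex stabilizers are exactly the images $i_v(G_v)$. Equivalently, this is the technical heart of the normal form theorem and of Britton's Lemma: one must show that a reduced word in the generators of $\pi$ cannot collapse, which simultaneously pins down the simple connectivity of $T$ and the identification of the stabilizers. Once this is established, both the vertex-group injectivity and the connected-subgraph injectivity follow immediately from the stabilizer description.
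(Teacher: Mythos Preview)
The paper does not give its own proof of this lemma; it is quoted from Cohen's book as a preliminary fact, so there is no argument in the paper to compare against. I will therefore comment on your proposal on its own merits.

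Your inductive argument in the second paragraph (building $Y$ one edge at a time and invoking the normal form theorem for amalgams and Britton's Lemma for HNN-extensions) is correct and is essentially the standard proof. It already yields the injectivity of each $i_v$.

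The argument in your third paragraph for a general connected subgraph $Z$, however, has a genuine gap. You assert that the action of $\pi(\cG|_Z,Z)$ on its Bass--Serre tree $T_Z$ is faithful (``has trivial global stabilizer''), but this is false in general: the kernel of the action is the intersection of all conjugates of all vertex groups, which need not be trivial. The simplest counterexample is $Z$ a single vertex, where $T_Z$ is a point and the whole group $G_v$ acts trivially. There is a second, related problem: you claim that $T_Z$ sits inside $T$ as a subtree, but the natural map $T_Z\to T$ is built from the homomorphism $\pi(\cG|_Z,Z)\to\pi$, and its injectivity on vertex sets is equivalent to the injectivity you are trying to prove, so the argument is circular at this point.

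The fix is to reuse your own inductive strategy from the second paragraph, but start the induction at $Z$ rather than at a single vertex. Remove an edge $e$ of $Y$ not in $Z$. If $Y\setminus e$ is still connected, then $\pi(\cG,Y)$ is an HNN-extension of $\pi(\cG|_{Y\setminus e},Y\setminus e)$, into which $\pi(\cG|_Z,Z)$ embeds by induction; Britton's Lemma finishes. If $Y\setminus e$ falls into two components $Y_1,Y_2$, then $Z$ lies in one of them (being connected), say $Y_1$, and $\pi(\cG,Y)=\pi(\cG|_{Y_1},Y_1)\ast_{G_e}\pi(\cG|_{Y_2},Y_2)$; the normal form theorem for amalgams together with the inductive hypothesis gives the embedding. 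This is precisely Cohen's proof.
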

 
Note that the natural map $i_v$ is the map induced by inclusion of $G_v$ into the free product $E\ast (\bigast _{v\in V(Y)} G_v)$.  Using the topological description of the fundamental group, it can also be described as the map induced by the inclusion of $BG_v$ into the total space $X (\cG, Y)$.

%--------------------

\subsection{Groups acting on graphs}\label{subsect:GroupActions}

Let $G$ be a group acting on a graph $X$. We say $G$ acts \emph{without inversion} if $ge\neq \bar{e}$ for every edge $e$ in $X$ and every $g\in G$.  Sometimes this type of action is called a cellular action.  Throughout the paper we will assume that all actions on graphs are without inversion. 
Assume that $X$ is a connected graph. Then we can define a graph of groups $(\cG, Y)$ on the graph $Y=X/G$ using the $G$-action on $X$. 
The vertex groups of $(\cG, Y)$ are the stabilizers of vertices of $X$ under $G$-action. The details of the construction of this graph of groups can be found in \cite[Section 5.4]{Serre}. 
%The idea is that given a maximal tree $T$ in $Y$, we can find a lifting of it to $X$. Let $j: T \to X$ 
%be such a lifting, then this lifting can be extended to a function $j: E(Y)\to E(X)$ which is no longer a map of graphs, 
%but still can be used to define vertex and edge groups for $\cG$. For each $v\in E(Y)$, we take $G_v=\Stab_G (j(v))$ as the stabilizer of the vertex $j(v)$ in $X$. For an edge $e\in E(Y)$, we take $G_e=\Stab _G (j(e))$. The homomorphisms $\phi _e: G_e \to G_v$ are defined as conjugation maps with certain elements in $G$ (see \cite[pg. 54]{Serre} for details). 
The first structure theorem of the Bass-Serre theory is the following:

\begin{theorem}[Theorem 12, pg. 52, \cite{Serre}]\label{thm:BassSerre1} 
If $\pi$ is the fundamental group of a graph of groups $(\cG , Y)$, then there is a tree $T$ on which $\pi$ acts without inversions
such that the graph of groups associated to the  $\pi$ action on $T$ is isomorphic to $(\cG, Y)$.
\end{theorem}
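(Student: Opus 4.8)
The plan is to construct the tree $T$ directly from the topology of the total space $X=X(\cG,Y)$, using the fact recorded above that $X$ is a classifying space for $\pi:=\pi(\cG,Y)$ whose universal cover $\tilde X$ is contractible. Let $p\colon \tilde X\to X$ be the universal covering, with $\pi$ acting on $\tilde X$ by deck transformations. First I would decompose the preimages of the building blocks of $X$. Each vertex space $BG_v$ is connected and, by the monomorphism lemma quoted above, its fundamental group $G_v$ injects into $\pi$; hence $p^{-1}(BG_v)$ is a disjoint union of contractible components (copies of $EG_v$) indexed by the coset space $\pi/i_v(G_v)$. Likewise, writing $i_e:=i_{t(e)}\circ\phi_e$, the preimage $p^{-1}(BG_e\times[0,1])$ is a disjoint union of contractible slabs indexed by $\pi/i_e(G_e)$. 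I then define a graph $T$ by
$$V(T)=\coprod_{v\in V(Y)}\pi/i_v(G_v),\qquad E(T)=\coprod_{e\in E(Y)}\pi/i_e(G_e),$$
with incidence maps and involution induced from those of $Y$, and exhibit a $\pi$-equivariant deformation retraction $\tilde X\to T$ crushing each contractible vertex component to the corresponding point of $V(T)$ and each slab to the corresponding edge. Since $\tilde X$ is contractible, so is $T$; a connected contractible graph is a tree, which establishes the tree property.

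It remains to recover $(\cG,Y)$ from the $\pi$-action on $T$. By construction there is exactly one $\pi$-orbit of vertices for each $v\in V(Y)$ and one orbit of (geometric) edges for each $e\in E(Y)$, so the quotient $T/\pi$ is naturally isomorphic to $Y$ as a graph; moreover the stabilizer of the base vertex $i_v(G_v)\in \pi/i_v(G_v)$ is $i_v(G_v)\cong G_v$, and similarly the stabilizer of the base edge is $i_e(G_e)\cong G_e$. Tracing the inclusions of stabilizers along the incidence maps returns the monomorphisms $\phi_e$, so the graph of groups associated to the $\pi$-action on $T$ is isomorphic to $(\cG,Y)$. The action is without inversion because deck transformations respect the projection $\tilde X\to X\to Y$ and the two ends of each lifted slab attach to distinct vertex components; hence no element of $\pi$ can reverse an edge (for an edge lying over a loop of $Y$ one uses the stable-letter bookkeeping to see that no group element interchanges the two endpoint cosets).

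The main obstacle is the middle step: justifying that the preimages split into contractible components indexed precisely by the stated cosets, and that the collapse is a genuine $\pi$-equivariant deformation retraction onto a one-dimensional complex. The coset indexing relies on the injectivity of $\pi_1$ of each vertex and edge space into $\pi$ (so that components correspond to full cosets rather than to smaller orbits), and the deformation retraction relies on each slab being attached to the vertex pieces along the full image of $B\phi_e$, so that crushing the contractible pieces preserves the homotopy type. I would verify these point-set and homotopy-theoretic details carefully. Finally, I note that the same tree $T$ can be produced purely combinatorially, defined by the identical coset formulas, with acyclicity proved instead via the normal form (reduced word) theorem for fundamental groups of graphs of groups, the common generalization of Britton's lemma and the amalgamated-product normal form; that route gives the same conclusion and may be preferable if one wishes to avoid the homotopy-theoretic input.
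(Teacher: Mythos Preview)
The paper does not supply its own proof of this statement; it is quoted verbatim from Serre and the paper only remarks that ``its construction is described in \cite[pg.~51]{Serre}.'' So there is no in-paper argument to compare against.

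Your topological proof is correct and is essentially the Scott--Wall approach (also cited in the paper as \cite{ScottWall}): realize $T$ as the nerve of the open cover of $\tilde X$ by lifted vertex and edge pieces, or equivalently as an equivariant deformation retract of $\tilde X$, and read off the tree property from contractibility. The coset description of $V(T)$ and $E(T)$ you give is exactly the one Serre writes down. The difference is that Serre's original argument---the one actually being cited---is purely combinatorial: he defines $T$ by those same coset formulas, exhibits for any two vertices an explicit path between them, and proves there are no circuits directly from the reduced-word (structure) theorem for $\pi(\cG,Y)$, without passing through $\tilde X$. Your final paragraph already identifies this alternative, and that route is in fact the one the reference takes. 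The topological route buys you a one-line proof that $T$ is a tree at the cost of the point-set verification you flag; Serre's route is self-contained but leans on the normal-form theorem.
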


The tree $T$ is usually called the universal cover of the graph of groups $(\cG, Y)$, and its construction is described in \cite[pg. 51]{Serre}. 
%In the case $\pi$ is the amalgamation $A \ast_C B$, the tree $T$ is just the coset poset for the subgroups $A, B$ and $C$.
%So edges of $T$ are $\pi/C=\{ gC | g\in \pi\}$ and the vertices are given by the set $\pi /A \coprod \pi/B$. Vertices of the edge $gC$ are defined to be the cosets $gA$ and $gB$.

The second structure theorem of the Bass-Serre theory is in some sense a converse to Theorem \ref{thm:BassSerre1}.  
Let $G$ be a group acting on a graph $X$ without inversions, and let $(\cG , Y)$ be the associated graph of groups where $Y=X/G$. If  $\pi=\pi(\cG, Y)$, then there is a group homomorphism $\varphi : \pi \to G$ that takes the elements in $i(G_v)$ to the corresponding  stabilizer subgroups in $G$ and takes the HNN-extension generators $t$ to the corresponding group elements in $G$. There is also a map of graphs $\psi : T \to X$ from the universal cover $T$ of  $(\cG, Y)$ to the graph $X$. 

\begin{theorem}[Thm 13, pg. 55, \cite{Serre}]\label{thm:BassSerre2}
With the above notation and hypothesis, the following properties are equivalent: \\
(i) $X$ is a tree.\\
(ii) $\psi: T \to X$ is an isomorphism of graphs. \\
(iii) $\varphi: \pi  \to G$ is an isomorphism of groups. 
\end{theorem}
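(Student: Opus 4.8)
The plan is to route every implication through the normal subgroup $N:=\ker\varphi \lhd \pi$, showing that each of (i), (ii), (iii) is equivalent to the single condition $N=1$. Throughout I would use the explicit description of the universal cover: $V(T)=\coprod_{v\in V(Y)} \pi/i_v(G_v)$ and $E(T)=\coprod_{e} \pi/i_e(G_e)$, while $V(X)=\coprod_{v} G/G_v$ and $E(X)=\coprod_e G/G_e$ once each vertex (resp.\ edge) group is identified with the stabilizer in $G$ of a chosen lift. The map $\psi$ then sends the coset $g\,i_v(G_v)$ to $\varphi(g)G_v$, and similarly on edges; this is well defined precisely because $\varphi$ restricts to the inclusion $G_v\hookrightarrow G$ on $i_v(G_v)$, so $\varphi(g\,i_v(h))G_v=\varphi(g)hG_v=\varphi(g)G_v$ for $h\in G_v$.

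First I would record two preliminary facts. (a) The homomorphism $\varphi$ is surjective: its image contains every vertex stabilizer $G_v$ together with all the edge-traversing elements $\varphi(t_e)$, and since $X$ is connected these generate $G$ (the standard generation statement for a group acting on a connected graph). (b) The subgroup $N$ acts freely, without inversion, on $T$. Indeed, the $\pi$-stabilizer of the vertex $g\,i_v(G_v)$ is the conjugate $g\,i_v(G_v)\,g^{-1}$; if $n\in N$ lies in this conjugate then $g^{-1}ng\in i_v(G_v)$ and $\varphi(g^{-1}ng)=1$, whence $g^{-1}ng=1$ because $\varphi$ is injective on $i_v(G_v)$. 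The identical computation applies to edge stabilizers, so $N$ meets no cell stabilizer nontrivially.

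Next I would identify $X$ with the quotient graph $T/N$. Since $\varphi$ is surjective with kernel $N$, it induces $\pi/N\cong G$, and hence for each vertex $v$ a bijection $N\backslash \pi/i_v(G_v)\cong G/\varphi(i_v(G_v))=G/G_v$; this is exactly the map induced by $\psi$ on $N$-orbits of vertices, and the analogous statement holds on edges. As $\psi$ is $\varphi$-equivariant and respects the incidence maps $o,t$ by construction, it descends to a graph isomorphism $T/N \xrightarrow{\ \cong\ } X$. In other words $\psi$ factors as $T\twoheadrightarrow T/N\cong X$, where the first arrow is the quotient by the free $N$-action.

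Finally I would assemble the equivalences. By Theorem \ref{thm:BassSerre1} the universal cover $T$ is a tree; since $N$ acts freely and without inversion, the quotient $T\to T/N$ is a covering of graphs with deck group $N$, and as $T$ is simply connected this gives $\pi_1(T/N)\cong N$, so $T/N$ is a tree if and only if $N=1$. Combining with the previous step: (i) holds iff $X\cong T/N$ is a tree iff $N=1$; (ii) holds iff the projection $T\to T/N\cong X$ is an isomorphism, which (the action being free) happens iff $N=1$; and (iii) holds iff $\varphi$ is injective, i.e.\ iff $N=1$, surjectivity being automatic by (a). Hence (i), (ii), (iii) are all equivalent to $N=1$. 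The technical heart, and the step I expect to be the main obstacle, is the identification $T/N\cong X$ of the third paragraph, since it requires matching the two combinatorial descriptions of the graphs cell by cell and checking compatibility with the orientation and incidence data; the surjectivity of $\varphi$ in (a) is the other point that genuinely uses connectedness of $X$.
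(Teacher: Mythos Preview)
The paper does not give its own proof of this statement; it is quoted as Theorem~13 from Serre's \emph{Trees} and used as a black box, so there is no in-paper argument to compare against.

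Your proposal is correct and is essentially the standard Bass--Serre argument. The reduction to the single condition $N=\ker\varphi=1$ is exactly the right organizing principle, and your two preliminary facts (a) and (b) are precisely what the paper itself records immediately after stating the theorem (surjectivity of $\varphi$ from connectedness of $X$, and freeness of the $N$-action on $T$ because $N$ meets no conjugate of a vertex group). The identification $T/N\cong X$ you single out as the technical heart is indeed the substantive step; your coset-by-coset description is the right way to carry it out, and the compatibility with $o,t$ and the orientation data follows because both $T$ and $X$ are built so that their structure maps are induced from the same monomorphisms $\phi_e$ of the graph of groups $(\cG,Y)$. Once that identification is in place, the three equivalences with $N=1$ are immediate for the reasons you give: $\pi_1(T/N)\cong N$ by covering space theory for the free action on the tree $T$, the quotient $T\to T/N$ collapses nothing iff $N=1$, and $\varphi$ is already surjective. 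Nothing is missing.
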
 
 
One of the consequences of Theorem \ref{thm:BassSerre2} is that if a group $G$ acts freely on a tree then $G$ is a free group. 
We can also conclude the following.
 
\begin{corollary}[Cor 1, pg 212, \cite{Cohen}]\label{cor:Cohen}\label{cor:Subgroup}  If $H$ is a subgroup of the fundamental group $\pi (\cG, Y)$ of a graph of groups such that the intersection of $H$ with every conjugate of subgroups $i_v(G_v)$ is the trivial group, then $H$ is free.
 \end{corollary}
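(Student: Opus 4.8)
The plan is to realize $H$ as a group acting freely on a tree and then invoke the Bass--Serre structure theorems recalled above. Write $\pi=\pi(\cG,Y)$, and let $T$ be the universal cover of the graph of groups $(\cG,Y)$, so that by Theorem \ref{thm:BassSerre1} the group $\pi$ acts on $T$ without inversion with associated graph of groups $(\cG,Y)$. A basic feature of this action, visible from the construction of $T$ in \cite[pg.\ 51]{Serre}, is that the stabilizer of any vertex of $T$ is a $\pi$-conjugate of one of the subgroups $i_v(G_v)$, and the stabilizer of any edge of $T$ is a $\pi$-conjugate of some $\phi_e(G_e)$; since $\phi_e(G_e)\leq i_{t(e)}(G_{t(e)})$, each edge stabilizer is contained in the stabilizer of an adjacent vertex.

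Now restrict the $\pi$-action on $T$ to $H$. For every vertex $w$ of $T$ we have $\Stab_H(w)=H\cap\Stab_\pi(w)$, and $\Stab_\pi(w)$ is a $\pi$-conjugate of some $i_v(G_v)$; by hypothesis this intersection is trivial, so $H$ fixes no vertex of $T$. Combined with the inclusion of edge stabilizers into vertex stabilizers noted above, $H$ fixes no edge of $T$ either. Since the $\pi$-action is without inversion, so is its restriction to $H$. Hence $H$ acts freely and without inversion on the tree $T$.

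Finally, form the graph of groups associated to the $H$-action on $T$: freeness forces all vertex and edge groups to be trivial, so its fundamental group is the ordinary fundamental group of the connected graph $T/H$, which is free. By the equivalence of (i) and (iii) in Theorem \ref{thm:BassSerre2}, applied to the $H$-action on the tree $T$, this fundamental group is isomorphic to $H$. Therefore $H$ is free. The only point requiring care is the identification of the vertex and edge stabilizers of $T$ with $\pi$-conjugates of the $i_v(G_v)$ and $\phi_e(G_e)$; this is built into the Bass--Serre correspondence between graphs of groups and group actions on trees, so there is no genuine obstacle.
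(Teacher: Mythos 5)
Your argument is correct and is precisely the standard Bass--Serre proof underlying the citation: the paper does not reprove this corollary but quotes it from Cohen, and it even records the key ingredient you use (a group acting freely on a tree is free) as a consequence of Theorem \ref{thm:BassSerre2}. The only cosmetic point is that ``$H$ fixes no vertex'' should read ``every vertex stabilizer in $H$ is trivial,'' which is what your computation $\Stab_H(w)=H\cap\Stab_\pi(w)=1$ actually establishes.
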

 
 In the situation described before Theorem \ref{thm:BassSerre2}, even for an arbitrary graph $X$, the homomorphism $\varphi : \pi \to G$ is surjective if $X$ is connected (see \cite[Lemma 4, pg 34]{Serre}). From the way $\varphi$ is defined it is easy to see that the kernel of $\varphi$ meets every conjugate of a vertex group in the trivial group, hence by Corollary \ref{cor:Cohen}, the kernel of $\varphi$ is a free group.  
 
%--------------------------
 
\subsection{Graphs of groups realizing fusion systems}\label{subsect:Realizing}

Given a saturated fusion system $\cF$ defined on a finite $p$-group $S$, there are two different constructions 
of a discrete group $\pi$ with Sylow $p$-subgroup $S$, due to  Leary and Stancu \cite{LearyStancu} 
and Robinson \cite{Robinson}, such that $\cF_S(\pi)=\cF$. In both of these constructions
the group $\pi$ is the fundamental group of a graph of groups. We first state the result by 
Leary and Stancu, which defines $\pi$ as an iterated HNN-extension of the group $S$ and does not 
require the fusion system $\cF$ to be saturated; it works for any fusion system.

\begin{theorem}[Leary and Stancu, \cite{LearyStancu}]\label{thm:LSModel} 
Let $\mathcal{F}$ be a fusion system on a $p$-group $S$ generated by isomorphisms $f_i:P_i\rightarrow Q_i$ 
for $1\leq i\leq r$.  We define a graph of groups $(\mathcal{G},Y)$ 
where $Y$ is the graph having only one vertex $v$ and edges $e_1,\overline{e_1},e_2,\overline{e_2},...,e_r,\overline{e_r}$. 
We define the vertex group $G_v:=S$ and edge groups $G_{e_i}=G_{\overline{e_i}}:=P_i$. The morphisms 
$\phi_{e_i}:P_i\hookrightarrow S$ are the inclusions and the morphisms $\phi_{\overline{e_i}}:P_i\rightarrow S$ 
are $f_i$ composed with inclusions of $Q_i$ into $S$. Then the fundamental group 
$\pi:=\pi(\cG, Y)$ realizes the fusion system $\cF$, that is $\mathcal{F}=\mathcal{F}_S(\pi).$
\end{theorem}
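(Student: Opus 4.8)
The plan is to verify the two things required for $\pi$ to realize $\cF$: that $S$ is a Sylow $p$-subgroup of $\pi$, and that $\cF_S(\pi)=\cF$. Since $S=G_v$ is the unique vertex group, the embedding lemma for graphs of groups (stated above) identifies $S$ with a finite $p$-subgroup of $\pi$. For the Sylow property I would pass to the Bass--Serre tree $T$ of $(\cG,Y)$, on which $\pi$ acts without inversions (Theorem \ref{thm:BassSerre1}); by Bass--Serre theory the vertex stabilizers of this action are precisely the $\pi$-conjugates of the vertex groups, hence here the conjugates of $S$. A finite subgroup $H\le\pi$ acts on $T$ without inversions and with finite orbits, so it fixes a vertex and is therefore subconjugate to $S$. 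Applying this to an arbitrary finite $p$-subgroup of $\pi$ shows that $S$ is a Sylow $p$-subgroup.

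Next I would prove $\cF\subseteq\cF_S(\pi)$. As $Y$ has a single vertex, the maximal tree is that vertex and no edge generator is killed, so $\pi$ is the iterated HNN--extension
$$\pi=\langle\, S,\ t_1,\dots,t_r\ \mid\ t_i\,a\,t_i^{-1}=f_i(a)\ \text{ for }a\in P_i,\ 1\le i\le r\,\rangle,$$
where $t_i$ is the image of $e_i$; here I have used that $\phi_{e_i}$ is the inclusion $P_i\hookrightarrow S$ while $\phi_{\overline{e_i}}$ is $f_i$ followed by $Q_i\hookrightarrow S$. Hence conjugation by $t_i$ restricts on $P_i$ to $f_i$, so each $f_i$ is a morphism in $\cF_S(\pi)$. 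Since $\cF_S(\pi)$ is a fusion system on $S$ and $\cF=\langle f_i\mid 1\le i\le r\rangle$ is by definition the smallest fusion system on $S$ containing all the $f_i$, we conclude $\cF\subseteq\cF_S(\pi)$.

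The heart of the argument is the reverse inclusion $\cF_S(\pi)\subseteq\cF$. Fix $g\in\pi$ and $P\le S$ with $gPg^{-1}\le S$; I want $c_g|_P\in\hom_\cF(P,gPg^{-1})$, and I would induct on the number $n$ of stable letters in a fixed reduced word $g=s_0\,t_{i_1}^{\varepsilon_1}s_1\cdots t_{i_n}^{\varepsilon_n}s_n$ (Britton normal form). The case $P=1$ is trivial for any $g$ (the unique map $1\to gPg^{-1}$ lies in $\hom_S\subseteq\hom_\cF$), and the case $n=0$ is immediate since then $g\in S$. For $n\ge1$ and $P\ne1$: given any $1\ne x\in P$, the element $gxg^{-1}\in S$ has stable length $0$, whereas the word $s_0t_{i_1}^{\varepsilon_1}\cdots t_{i_n}^{\varepsilon_n}(s_nxs_n^{-1})t_{i_n}^{-\varepsilon_n}\cdots t_{i_1}^{-\varepsilon_1}s_0^{-1}$ has $2n$ stable letters, so Britton's lemma forces a pinch; since $g$ and $g^{-1}$ are reduced the only possible one is the central subword $t_{i_n}^{\varepsilon_n}(s_nxs_n^{-1})t_{i_n}^{-\varepsilon_n}$, which means $s_nxs_n^{-1}\in P_{i_n}$ if $\varepsilon_n=1$ and $s_nxs_n^{-1}\in Q_{i_n}$ if $\varepsilon_n=-1$. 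As $x$ was arbitrary, $P':=s_nPs_n^{-1}$ lies in $P_{i_n}$ (resp. $Q_{i_n}$), so conjugation by $t_{i_n}^{\varepsilon_n}$ carries $P'$ into $S$ and restricts on $P'$ to $f_{i_n}$ (resp. $f_{i_n}^{-1}$), a morphism of $\cF$. Writing $g=g'\,t_{i_n}^{\varepsilon_n}s_n$ with $g'=s_0t_{i_1}^{\varepsilon_1}\cdots t_{i_{n-1}}^{\varepsilon_{n-1}}s_{n-1}$ reduced with $n-1$ stable letters, one gets $c_g|_P=(c_{g'}|_R)\circ(f_{i_n}^{\varepsilon_n}|_{P'})\circ(c_{s_n}|_P)$, where $R:=f_{i_n}^{\varepsilon_n}(P')\le S$ satisfies $g'Rg'^{-1}=gPg^{-1}\le S$. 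The inductive hypothesis gives $c_{g'}|_R\in\hom_\cF$, while $c_{s_n}|_P\in\hom_S\subseteq\hom_\cF$ and $f_{i_n}^{\varepsilon_n}|_{P'}\in\hom_\cF$ by axioms (i) and (ii) together with closure under composition; hence $c_g|_P\in\hom_\cF$. This closes the induction, so $\cF_S(\pi)\subseteq\cF$, and with the first two steps $\pi$ realizes $\cF$.

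I expect the only real difficulty to lie in this inductive step: it depends on the normal form theory (Britton's lemma) for iterated HNN--extensions, both to make sense of ``number of stable letters in a reduced word'' as an induction parameter and to justify that, since $g$ and $g^{-1}$ are already reduced, the unique candidate for a pinch in $gxg^{-1}$ is the central one. The accompanying bookkeeping — tracking whether $s_nPs_n^{-1}$ lands in $P_{i_n}$ or $Q_{i_n}$ according to the sign $\varepsilon_n$, and correspondingly whether one applies $f_{i_n}$ or its inverse — is where care is needed; the remainder is just the fusion-system axioms and the structure theory of graphs of groups recalled above.
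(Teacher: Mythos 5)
Your proof is correct. Note that the paper itself gives no proof of this statement --- it is quoted from \cite{LearyStancu} as a known result --- so there is nothing internal to compare against; your argument is essentially the original one. The three steps (the Sylow property from the fixed-point theorem for finite groups acting without inversion on the Bass--Serre tree, the inclusion $\cF\subseteq\cF_S(\pi)$ read off from the HNN relations $t_i a t_i^{-1}=f_i(a)$, and the reverse inclusion by induction on the number of stable letters) are the standard route; the only substantive point, correctly identified and handled in your write-up, is that Britton's lemma applied to $gxg^{-1}$ forces the central pinch $t_{i_n}^{\varepsilon_n}(s_nxs_n^{-1})t_{i_n}^{-\varepsilon_n}$, which places $s_nPs_n^{-1}$ inside $P_{i_n}$ or $Q_{i_n}$ according to the sign of $\varepsilon_n$ and lets the induction close. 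In the cited source the same induction is phrased geometrically, by walking along the geodesic from the base vertex $v_0$ to $gv_0$ in the Bass--Serre tree (both fixed by $P$ and $gPg^{-1}$ respectively), but this is just the normal-form analysis in different clothing.
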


As an example of the Leary-Stancu model, consider the group 
$$G=S_3=\langle a, b \, | \, a^2=b^3=1, aba=b^2 \rangle$$ at prime $p=3$.
The unique Sylow $3$-subgroup of $G$ is $S=\langle b \rangle \cong C_3$. The morphism 
$f: S \to S$ defined by $f(b)=b^2$ generates the fusion system $\cF_S(G)$.
In this case the Leary-Stancu model gives the infinite group 
$$\pi =\langle b, t \, | \, b^3=1, tbt^{-1}=b^2 \rangle \cong C_3 \rtimes \bZ.$$
We will come back to this example later in Example \ref{ex:1} when we discuss cohomology 
of infinite group models. 

We now describe the Robinson model for realizing fusion systems.
 
\begin{theorem}[Robinson \cite{Robinson}]\label{thm:RobinsonModel} 
Let $\mathcal{F}$ be a fusion system on a $p$-group $S$ generated by the images 
$\mathcal{F}_{S_i}(G_i)$ under injective group homomorpisms $f_i:S_i\hookrightarrow S$ 
for $1\leq i\leq r$. We define a graph of groups $(\mathcal{G},Y)$, where $Y$ has vertices 
$v_0,v_1,v_2,...,v_r$ and edges $e_i,\overline{e_i}$ between $v_0$ and $v_i$ for $1\leq i \leq r$. 
The vertex groups are $G_{v_0}:=S$ and $G_{v_i}=G_i$ for $1\leq i \leq r$. The edge groups are 
$G_{e_i}=G_{\overline{e_i}}:=S_i$ and monomorphisms $\phi_{e_i}:S_i\hookrightarrow S$,  
$\phi_{\overline{e_i}}:S_i\hookrightarrow G_i$ are inclusions. Then the fundamental group
$\pi:=\pi (\mathcal{G},Y)$ realizes the fusion system $\cF$.
\end{theorem}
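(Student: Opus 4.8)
The plan is to verify the two conditions packaged in the phrase ``$\pi$ realizes $\cF$'': that $S$ is a Sylow $p$-subgroup of $\pi$, and that $\cF_S(\pi)=\cF$. Throughout I identify each vertex group and edge group with its image in $\pi$ under the canonical monomorphism (as in \cite[Lemma 19]{Cohen}); since $Y$ is a tree there are no stable letters, and the edge relations make the subgroup $f_j(S_j)\le S$ coincide, inside $\pi$, with the subgroup $S_j\le G_j$. I will use that $S_j$ is a Sylow $p$-subgroup of $G_j$ (this is part of the hypothesis; cf. \cite{Robinson} and Remark~\ref{rem:RobinsonModel}). Let $T$ be the Bass--Serre tree of $(\cG,Y)$. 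Because $Y$ is the star with centre $v_0$, the tree $T$ is bipartite, with one side the set $\pi/S$ of lifts of $v_0$ and the other side $\coprod_{j\ge 1}\pi/G_j$ of lifts of the $v_j$; every edge joins the two sides, vertex stabilizers are the $\pi$-conjugates of $S$ and of the $G_j$, and the stabilizer of the edge joining a vertex $xS$ to a vertex $xG_j$ is $x f_j(S_j) x^{-1}$.

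The inclusion $\cF\subseteq\cF_S(\pi)$ is immediate: $\cF$ is generated by the morphisms $f_j\circ c_b\circ f_j^{-1}\colon f_j(P)\to f_j(Q)$ for $b\in G_j$ and $P,Q\le S_j$, and since $f_j(x)=x$ in $\pi$ for $x\in S_j$, such a morphism is exactly the restriction of conjugation by $b\in G_j\le\pi$; as $\cF_S(\pi)$ is a fusion system containing these generators, $\cF\subseteq\cF_S(\pi)$. For the Sylow condition, a finite $p$-subgroup of $\pi$ fixes a vertex of $T$ (a finite group acting on a tree has a fixed point, \cite{Serre}), hence lies in some $xSx^{-1}$ or some $xG_jx^{-1}$; in the first case it is $\pi$-conjugate into $S$, and in the second Sylow's theorem in $G_j$ moves it into a conjugate of $f_j(S_j)\le S$. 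In particular every finite $p$-subgroup has order $\le|S|$. Since a finitely generated $p$-subgroup of $\pi$ is a torsion group acting on $T$ with finite stabilizers and all elements elliptic, it too fixes a vertex and is finite, and a group whose finitely generated subgroups all have order $\le|S|$ is itself finite; so every $p$-subgroup of $\pi$ is finite and $\pi$-conjugate into $S$.

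The substantive part is $\cF_S(\pi)\subseteq\cF$. Given $g\in\pi$ and $P\le S$ with $Q:=gPg^{-1}\le S$, I want $c_g|_P\in\cF$. Put $v^\ast:=1\cdot S$, so $\Stab_\pi(v^\ast)=S$. Then $P$ fixes both $v^\ast$ and $g^{-1}v^\ast$, hence fixes pointwise the geodesic $[v^\ast,g^{-1}v^\ast]$, which by bipartiteness has even length $2m$ with vertices $v^\ast=u_0,u_1,\dots,u_{2m}=g^{-1}v^\ast$, the $u_{2i}$ being lifts of $v_0$ and each $u_{2i+1}$ a lift of some $v_j$. Choosing $x_i\in\pi$ with $x_iv^\ast=u_{2i}$, $x_0=1$, $x_m=g^{-1}$, each $P_i:=x_i^{-1}Px_i$ lies in $S$, $P_0=P$, $P_m=Q$, and $c_g|_P=c_{x_m^{-1}}|_{P_0}$ is the composite of the maps $c_{x_{i+1}^{-1}x_i}\colon P_i\to P_{i+1}$, so it suffices to put each of these in $\cF$. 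Fixing $i$, write $u_{2i+1}=yG_j$; since $P$ fixes $u_{2i+1}$, the group $y^{-1}Py$ is a $p$-subgroup of $G_j$, and after replacing $y$ by $yc$ for a suitable $c\in G_j$ (which leaves $u_{2i+1}$ unchanged) I may assume $R:=y^{-1}Py\le S_j$. Expressing the two geodesic edges at $u_{2i+1}$ as cosets of $f_j(S_j)$ gives $y^{-1}x_i=b t s^{-1}$ and $x_{i+1}^{-1}y=s' t' b'^{-1}$ with $b,b'\in G_j$, $t,t'\in S_j$, $s,s'\in S$, and the fact that $P$ lies in the stabilizers of both edges forces $s^{-1}P_is$, $ts^{-1}P_ist^{-1}$, $b'^{-1}Rb'$, $t'b'^{-1}Rb't'^{-1}$ all into $S_j$. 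Hence $c_{x_{i+1}^{-1}x_i}=c_{x_{i+1}^{-1}y}\circ c_{y^{-1}x_i}$ factors as
\[
P_i \ \xrightarrow{c_{s^{-1}}}\ s^{-1}P_is \ \xrightarrow{c_t}\ \bullet \ \xrightarrow{c_b}\ R \ \xrightarrow{c_{b'^{-1}}}\ \bullet \ \xrightarrow{c_{t'}}\ \bullet \ \xrightarrow{c_{s'}}\ P_{i+1},
\]
with all intermediate subgroups inside $S_j$; here $c_{s^{-1}},c_{s'}$ are $S$-conjugations and $c_t,c_{t'}$ are $S_j$-conjugations, so they lie in $\hom_S\subseteq\cF$, while $c_b,c_{b'^{-1}}$ are conjugations by elements of $G_j$ between subgroups of $S_j$, so they lie in $\cF_{S_j}(G_j)$, hence in $f_j(\cF_{S_j}(G_j))\subseteq\cF$ after the identification $S_j=f_j(S_j)$. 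Thus $c_{x_{i+1}^{-1}x_i}\in\cF$, and therefore $c_g|_P\in\cF$.

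The hard part is this last step: one has to follow conjugation by $g$ along the fixed geodesic and cut it into pieces each visibly ``internal'' to a single vertex group. The two facts that make this work are the identification of edge stabilizers in $T$ with $\pi$-conjugates of the $f_j(S_j)$ (which is what lets the seam subgroups be pushed inside $S_j$), and the use of Sylow's theorem in $G_j$ to adjust the lifts $y$ so that $y^{-1}Py$ actually sits inside $S_j$ rather than merely inside a $G_j$-conjugate of it; the bipartite structure of $T$ — so that the geodesic alternates between a vertex carrying $S$ and one carrying some $G_j$ — is what keeps this bookkeeping uniform. I would expect the most delicate routine point to be checking, from ``$P$ fixes both edges at $u_{2i+1}$'', that the intermediate subgroups displayed above genuinely lie in $S_j$.
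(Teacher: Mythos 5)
Your argument is correct. One caveat on the comparison: the paper does not actually prove this statement --- it quotes it from Robinson and points to \cite[Thm 3]{LearyStancu} for a proof --- so there is no in-paper argument to measure yours against. What you give is the geometric form of the standard proof: Leary and Stancu argue combinatorially with normal forms for elements of an iterated amalgam (Britton's-lemma style, by induction on word length), whereas you work in the Bass--Serre tree and decompose $c_g|_P$ along the geodesic fixed by $P$; these are two presentations of the same mechanism, and the tree version has the advantage of making the ``seam'' subgroups (the edge stabilizers, which force each local conjugation to pass through some $S_j$) completely visible. All the individual steps check out: the identification $f_j(S_j)=S_j$ inside $\pi$ from the edge relations, the Sylow argument via ellipticity (including your extra care with possibly infinite $p$-subgroups, using Serre's fixed-point lemma for finitely generated groups of elliptic elements and a maximality argument to reduce to the finite case), and the factorization $x_{i+1}^{-1}x_i = s'\,t'\,b'^{-1}\,b\,t\,s^{-1}$ with the intermediate subgroups landing in $S_j$ precisely because $P$ stabilizes the two edges at $u_{2i+1}$. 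You were also right to flag that the statement as printed suppresses the hypothesis that each $S_i$ is a Sylow $p$-subgroup of $G_i$; this is genuinely needed (both for $S$ to be Sylow in $\pi$ and for the normalization $y^{-1}Py\le S_j$), and it is part of the hypotheses in Robinson's and Leary--Stancu's formulations.
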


Robinson's theorem is proved also in \cite[Thm 3]{LearyStancu}. Note that to apply Robinson's model 
to a particular fusion system, we need to start with  a collection of subgroups $G_i$ such that 
images of $\cF_{S_i} (G_i)$ generate the fusion system $\cF$. Such a collection always exists
for a saturated fusion system, but does not exist for an arbitrary fusion system (see \cite[Section 4]{LearyStancu}). 

Given a saturated fusion system $\cF$, a family of subgroup $\{P_i\}$ in $S$ is called a \emph{conjugation family} if $\cF$ is generated by 
morphisms in the normalizer fusions system $N_{\cF} (P_i )$. By a theorem of Goldschmidt \cite{Goldschmidt}, the family of $\cF$-centric and $\cF$-radical subgroups in $S$ form a conjugation family. For each $i$, the normalizer fusion system $N_{\cF} (P_i)$ is realized by a finite group $G_i$
with a Sylow $p$-subgroup isomorphic to $N_S(P_i)$ (see \cite[Thm 3.70]{Craven}). Hence if we take the groups $G_i$ in Theorem \ref{thm:RobinsonModel}  as these model groups and the subgroups $S_i$ as their Sylow $p$-subgroups, then the infinite group  $\pi$ obtained using the Robinson construction will realize the fusion system $\cF$.

\begin{remark}\label{rem:RobinsonModel} If $\cF$ is a fusion system realized by a finite group $G$ with a Sylow $p$-subgroup $S$, we take the 
subgroups $G_i$ in the Robinson model as the normalizers $G_i=N_G(P_i)$ where $\{ P_i \}$ is the family of all fully normalized, $\cF$-radical, and $\cF$-centric subgroups of $S$. For the edge groups we take the Sylow $p$-subgroups $S_i=N_S(P_i)$ for every $i$.  
When we refer to the Robinson model for a fusion system realized by a finite group $G$, we will always assume 
that the collection of groups $\{G_i\}$ and $\{ S_i\}$ appearing in Theorem \ref{thm:RobinsonModel} are chosen as 
described here.
\end{remark} 

Associated to a saturated fusion system $\cF$ there is a \emph{centric linking system} $\cL$ (see \cite[Def 9.35]{Craven}) and the triple 
$(S, \cF, \cL)$ is called a 
$p$-local finite group.  The cohomology of the \emph{$p$-local finite group} $(S, \cF, \cL)$ is defined to be the cohomology of the $p$-completion
of the realization $|\cL|$ of the linking system $\cL$. It is shown in \cite[Thm B]{BLO2} that the cohomology of a $p$-local finite group is isomorphic to the subalgebra of $\cF$-stable elements in $H^*(S; \bF_p)$, denoted by $H^* (\cF; \bF_p)$. We define \emph{the cohomology of the fusion system $\cF$} as the inverse limit $$H^* (\cF; \bF_p ) :=\lim _{P \in \cF} H^* (P; \bF_p)$$ and it is easy to see that these two definitions for $H^*(\cF; \bF_p)$ coincide.  

In general the cohomology of a fusion system may be different than the cohomology 
of an infinite group $\pi$ that realizes it. The following theorem by Libman and Seeliger \cite[Thm 1.1]{LibmanSeeliger} 
explains the relation between these two cohomology groups. 

\begin{theorem}[Libman and Seeliger, \cite{LibmanSeeliger}]\label{thm:LibmanSeeliger} 
Let $\cF$ be a saturated fusion system defined on a finite $p$-group $S$, and let $\pi$ be an infinite group realizing $\cF$, 
constructed using the Leary-Stancu model or the Robinson model. Then the map 
$res_S^\pi :H^*(\pi,\mathbb{F}_p)\rightarrow H^*(S,\mathbb{F}_p) $ splits as an $\bF_p$-algebra map and 
has an image isomorphic 
to $H^{*}(\mathcal{F};\mathbb{F}_p) $ that gives 
$$H^{*}(\pi;\mathbb{F}_p)\cong H^{*}(\mathcal{F};\mathbb{F}_p)\oplus \ker(\res_S^\pi).$$
\end{theorem}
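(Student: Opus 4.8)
The plan is to realize the abstract comparison space-theoretically: I would construct a map $f\colon B\pi\to |\cL|_p^\wedge$ from the classifying space of $\pi$ to the $p$-completed nerve of the centric linking system $\cL$ of $\cF$, with the property that the composite $BS\xrightarrow{j_S} B\pi\xrightarrow{f}|\cL|_p^\wedge$ (where $j_S$ is the inclusion of the vertex space corresponding to $S$) is homotopic to the structural map $\theta_S$. Granting such an $f$, the statement becomes formal: by \cite[Thm B]{BLO2} (recalled in the excerpt) the map $\theta_S^{\ast}\colon H^\ast(|\cL|_p^\wedge;\bF_p)\to H^\ast(S;\bF_p)$ is injective with image the subalgebra $H^\ast(\cF;\bF_p)$ of $\cF$-stable elements, so $\res_S^\pi\circ f^\ast=\theta_S^\ast$ is injective, $f^\ast$ is injective, and $\res_S^\pi$ carries $\mathrm{im}(f^\ast)$ isomorphically onto $H^\ast(\cF;\bF_p)$.

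To build $f$ I would exploit the graph-of-groups description $B\pi=X(\cG,Y)$, which is assembled from the vertex spaces $BG_v$ and the cylinders $BG_e\times[0,1]$; since the indexing graph $Y$ is one-dimensional, a map out of $X(\cG,Y)$ into a space $Z$ is precisely the data of a map $BG_v\to Z$ for each vertex together with, for each unoriented edge $e$, a homotopy between the two composites $BG_e\to BG_{o(e)}\to Z$ and $BG_e\to BG_{t(e)}\to Z$, with no higher coherence to verify. For the Leary--Stancu model one takes the single vertex map to be $\theta_S$ and, for each generating isomorphism $f_i\colon P_i\to Q_i$ of $\cF$, one needs a homotopy $\theta_S\circ B\iota_i\simeq\theta_S\circ B(\iota_{Q_i}\circ f_i)$; this exists because $f_i$ is an isomorphism in $\cF$, so both composites lie in the homotopy class of the structural map $\theta_{P_i}$ (by \cite{BLO2}, the homotopy class of a map $BP_i\to|\cL|_p^\wedge$ depends only on the $\cF$-conjugacy class of the underlying homomorphism $P_i\to S$, and we may take the $P_i$ $\cF$-centric by Alperin's theorem). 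For the Robinson model there are additional vertex spaces $BG_i$, where $G_i$ realizes the local system $N_\cF(P_i)$ via the fusion-preserving inclusion $S_i=N_S(P_i)\hookrightarrow S$; the theory of $p$-local finite groups then provides a map $BG_i\to|\cL|_p^\wedge$ whose restriction to $BS_i$ is $\theta_{S_i}\simeq\theta_S\circ B(\text{incl})$, which supplies the edge homotopies. Assembling all of this yields $f$ with $f\circ j_S\simeq\theta_S$.

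It remains to turn the resulting injection into the claimed splitting. First, for \emph{any} group realizing $\cF$ one has $\mathrm{im}(\res_S^\pi)\subseteq H^\ast(\cF;\bF_p)$: inner automorphisms act trivially on $H^\ast(\pi;\bF_p)$, so for $g\in\pi$ with $gPg^{-1}\le S$ the two maps $H^\ast(\pi)\to H^\ast(P)$ obtained by restricting along $P\hookrightarrow\pi$ and along $P\xrightarrow{c_g}gPg^{-1}\hookrightarrow\pi$ agree, which is exactly the $\cF_S(\pi)=\cF$-stability condition on $\res_S^\pi(x)$. Combining this with the previous paragraph: given $x\in H^\ast(\pi;\bF_p)$, since $\res_S^\pi(x)\in H^\ast(\cF;\bF_p)=\res_S^\pi(\mathrm{im}\,f^\ast)$ there is $u$ with $\res_S^\pi(f^\ast u)=\res_S^\pi(x)$, whence $x=f^\ast u+(x-f^\ast u)$ with $x-f^\ast u\in\ker(\res_S^\pi)$; and if $f^\ast u\in\ker(\res_S^\pi)$ then $\theta_S^\ast u=0$, so $u=0$. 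Hence $H^\ast(\pi;\bF_p)=\mathrm{im}(f^\ast)\oplus\ker(\res_S^\pi)$, and $s:=f^\ast\circ(\theta_S^\ast)^{-1}$ is an $\bF_p$-algebra section of $\res_S^\pi$ onto $H^\ast(\cF;\bF_p)$, giving the displayed isomorphism.

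The main obstacle is the construction of $f$: one must verify that the generating $\cF$-morphisms (in the Leary--Stancu case) and the realizing groups $G_i$ of the local normalizer systems (in the Robinson case) are compatible, up to homotopy, with the structural maps of $\cL$, and — using that only first-order coherence over the one-dimensional graph $Y$ is required — that these homotopies can be chosen to glue. This is the step where the explicit combinatorial form of the two models is essential, rather than merely the abstract fact that $\pi$ realizes $\cF$; everything past the existence of $f$ is formal manipulation with the Broto--Levi--Oliver stable-elements theorem.
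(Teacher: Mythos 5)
Your argument is correct and is essentially the proof the paper relies on: the paper does not reprove this statement but cites Libman--Seeliger, whose argument is exactly your construction of a map $f\colon B\pi\to|\cL|_p^{\wedge}$ from the one-dimensional graph-of-groups decomposition of $B\pi$ followed by the Broto--Levi--Oliver stable-elements theorem. (The paper's own later, more elementary proof via storing homomorphisms applies only when $\cF$ is realized by a finite group, so it does not replace the argument you give for general saturated $\cF$.)
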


The proof of this theorem uses results from the homotopy theory of linking systems. We will give later another proof for this theorem for fusion systems realized by a finite group.
 
%-------------------------------------

\section{Graphs of groups with a storing homomorphism}\label{sect:Storing}

In this section we use the definitions and notation introduced in the previous section. 
 
\begin{definition}\label{def:Storing}
Let $(\mathcal{G}, Y)$ be a graph of groups and $G$ be a finite group. 
A group homomorphism $\chi: \pi(\mathcal{G}, Y) \rightarrow G$ is called a \emph{storing homomorphism}
if it is surjective and for any vertex group $G_v$ with inclusion map $i_v : G_v\rightarrow  \pi(\mathcal{G}, Y)$, the composition 
$\chi \circ i_v : G_v \rightarrow G$ is injective.     
\end{definition}

The kernel of a storing homomorphism $\chi : \pi \to G$ has a trivial intersection with $i_v(G_v)$ for each vertex group $G_v$, hence 
by Corollary \ref{cor:Subgroup}, the kernel of $\chi$ is a free group. Therefore, we have 
an exact sequence  
$$1\rightarrow F\rightarrow \pi (\mathcal{G}, Y) \maprt{\chi}G \rightarrow 1$$ 
where $F$ is a free group. This gives a $G$-action on the abelianization $F_{ab}=F/{[F,F]}$ 
induced by conjugation in $\pi (\cG, Y)$.  

By Theorem \ref{thm:BassSerre1}, the fundamental group $\pi:=\pi (\mathcal{G}, Y)$ acts on a tree 
$T$ without inversion in such a way that the isotropy subgroups of the vertices of $T$ are conjugate to the vertex groups 
$G_v$ of $(\cG, Y)$. The $\pi$-action on $T$ induces an action of $G\cong \pi (\mathcal{G}, Y) / F$ 
on the quotient graph $X=T/F$. From this we obtain a $G$-action on $H_1(X)$.

\begin{lemma}\label{lem:ZGIsom}
There is a $\mathbb{Z}G$-module isomorphism between  $F_{ab}$ and $H_1(X)$.
\end{lemma}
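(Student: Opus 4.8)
The plan is to realize the isomorphism at the level of the cellular chain complex of the Bass--Serre tree $T$, so that the two $G$-actions involved visibly come from a single $\pi$-action. First I would record that $F=\ker\chi$ acts freely on $T$. By Theorem \ref{thm:BassSerre1} the vertex stabilizers of the $\pi$-action on $T$ are the conjugates $g\,i_v(G_v)\,g^{-1}$ of the vertex groups, and the edge stabilizers lie inside vertex stabilizers. Since $F$ is normal in $\pi$ and meets each $i_v(G_v)$ trivially (the defining property of a storing homomorphism, already used above to invoke Corollary \ref{cor:Subgroup}), we get $F\cap g\,i_v(G_v)\,g^{-1}=g\bigl(F\cap i_v(G_v)\bigr)g^{-1}=1$ for every $g\in\pi$. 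Hence $F$ permutes the cells of $T$ freely and without inversion, so $p\colon T\to X=T/F$ is a regular covering with deck group $F$, and $T$ is its universal cover because $T$ is a tree.

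Next I would set up the chain-level comparison. Let $C_*(T)$ be the cellular chain complex of $T$ over $\bZ$, concentrated in degrees $0$ and $1$. Since $\pi$ acts without inversion, $C_0(T)$ and $C_1(T)$ are $\bZ\pi$-permutation modules, and since $F$ acts freely on the cells they are free as $\bZ F$-modules. As $T$ is contractible, the augmented complex $0\to C_1(T)\to C_0(T)\to\bZ\to 0$ is exact, so $C_*(T)\to\bZ$ is a $\pi$-equivariant free $\bZ F$-resolution of the trivial module $\bZ$ (of length one, consistent with $F$ being free). Because $F$ acts freely and cellularly, the chain complex of the quotient is the complex of $F$-coinvariants, $C_*(X)=C_*(T)_F=\bZ\otimes_{\bZ F}C_*(T)$, and the geometric action of $G=\pi/F$ on $X=T/F$ induces on these coinvariants exactly the action coming from the $\pi$-action on $C_*(T)$ (well defined since $F$ acts trivially on the coinvariants). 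Passing to homology in degree $1$ then gives $H_1(X)\cong H_1(F;\bZ)=F_{ab}$.

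Finally I would check that this identification is $\bZ G$-linear. The key point is that the canonical $\bZ G$-module structure on $H_*(F;\bZ)$ attached to the extension $1\to F\to\pi\to G\to 1$ — the coefficient system of the Lyndon--Hochschild--Serre spectral sequence — is, by construction, the $\pi$-action induced on $H_*\bigl(\bZ\otimes_{\bZ F}P_*\bigr)$ for any $\pi$-equivariant projective $\bZ F$-resolution $P_*\to\bZ$; this $\pi$-action factors through $G$ because inner automorphisms act trivially on homology, and on $H_1$ it is precisely the conjugation action of $G$ on $F_{ab}$. Applying this with $P_*=C_*(T)$, and noting that $\bZ\otimes_{\bZ F}C_*(T)=C_*(X)$ carries the geometric $G$-action, yields the desired $\bZ G$-isomorphism $H_1(X)\cong F_{ab}$, where the $G$-action on $F_{ab}$ is the one induced by conjugation in $\pi$.

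The main obstacle is exactly this last step: matching the geometric $G$-action on $H_1(X)$ with the algebraic conjugation action on $F_{ab}$. A naive route through $\pi_1(X)\cong F$ runs into the fact that the $G$-action on $\pi_1$ is only defined up to a choice of basepoint and of connecting paths; routing everything through the single $\pi$-equivariant resolution $C_*(T)$, so that both actions descend from one $\pi$-action, is the clean way to circumvent this, and is the part I would write out carefully.
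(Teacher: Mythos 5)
Your proof is correct, but it takes a genuinely different route from the paper's. The paper works entirely with covering space theory: it fixes a basepoint $v\in T$, uses the explicit isomorphism $F\cong\pi_1(X,\bar v)$ sending $f$ to the image of a path from $v$ to $fv$, and then verifies $G$-equivariance on $H_1$ by hand, decomposing the loop representing $\gamma f\gamma^{-1}$ into three arcs and observing that the two basepoint-correction arcs cancel after abelianization. You instead work at the chain level: $C_*(T)$ is a $\pi$-equivariant free $\bZ F$-resolution of $\bZ$ (since $F$ acts freely and without inversion on the tree $T$, which you correctly justify from normality of $F$ and $F\cap i_v(G_v)=1$), its $F$-coinvariants are $C_*(X)$ with the geometric $G$-action, and the standard identification of the induced $\pi/F$-action on $H_*(F;\bZ)$ with conjugation (Brown, Ch.~III) finishes the argument. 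The two approaches buy different things: yours packages the basepoint bookkeeping into a single invocation of the equivariant-resolution formalism, which is cleaner and would generalize to higher homology or nontrivial coefficients; the paper's is more elementary and self-contained, needing only covering space theory and an explicit homotopy of paths. Both establish exactly the statement of the lemma, so your argument is an acceptable substitute.
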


\begin{proof}
Let  $\pi : T\rightarrow X=T/F$ denote the quotient map which takes a point $t\in T$ to its 
$F$-orbit $Ft$. Fix a vertex $v \in T$, and let $\bar{v}=\pi(v)$. By covering space theory, there is an isomorphism 
$F \cong \pi _1 (X, \bar{v})$ given by the map $ \phi : F\rightarrow \pi_1(X, \bar{v} )$ that takes an 
$f\in F$ to the path homotopy class $[\pi (\tau )]$, where $\tau=p(v, f v)$ is a path from $v$ to $fv$. 
	
Let $\hat{\phi}$ be the induced isomorphism between the abelianization groups 
$F_{ab} \to H_1(X)\cong (\pi_1(X,\bar{v}))_{ab}$. We have a commutative diagram    
\[ \begin{tikzcd}
F \arrow{r}{\phi} \arrow[swap]{d}{j} & \pi_1(X,\bar{v}) \arrow{d}{k} \\%
F_{ab} \arrow{r}{\hat{\phi}}& H_1(X)
\end{tikzcd} \]
where $j$ and $k$ are the abelianization maps. To show that $\hat{\phi}$ is a $\mathbb{Z}G$-module isomorphism, 
it is enough to show that for every  $f \in F$ and $g\in G$, the equality $k ( \phi  (\gamma f \gamma ^{-1})) = 
g \cdot k (\phi (f ))$ holds for every $\gamma \in \pi (\cG, Y)$ such that $\chi (\gamma)=g$.
	
We have $\phi(f) = [\pi (\tau)]$ where $\tau =p(v , f v)$ is a path from $v$ to $fv$. For $\gamma \in \pi (\cG, Y)$, we have
 $\phi(\gamma f\gamma^{-1}) = [\pi p (v,\gamma f\gamma^{-1}v) ]$. Note that
$$p (v,\gamma f\gamma^{-1}v) \simeq p (v,\gamma v) \cdot p (\gamma v, \gamma fv) \cdot p (\gamma fv, \gamma f\gamma^{-1}v) $$
in $T$. %where $\circ$ denote the concatenation of paths. 
Since $\pi$ annihilates the $F$-action, we have
$$\pi(p(\gamma fv, \gamma f\gamma^{-1}v)  )= \pi(\gamma f \gamma^{-1}p(\gamma v,v))=\pi(p(\gamma v,v)) 
=\pi (p(v, \gamma v ))^{-1}.$$
This gives
\begin{align*}
k (\phi(\gamma f\gamma^{-1}))&=k[\pi p (v,\gamma v))]+k[\pi p (\gamma v,\gamma fv))]-k[\pi p  (\gamma v,v))]=k [\pi p (\gamma v,\gamma fv)]
\end{align*}
in $H_1(X)$. Note that $\pi p (\gamma v, \gamma fv )$ is a loop at $g \bar{v}$ whose homology class is equal 
to $g k[\pi p (v, fv)]$. Hence we have $k(\phi(\gamma f\gamma^{-1}))= gk(\phi(f))$ as desired. We conclude that $\hat{\phi}$ is a 
$\mathbb{Z} G$-module isomorphism between $F_{ab}$ and $H_1(X)$.
\end{proof}

\begin{theorem}\label{thm:Storing}
Let $(\mathcal{G}, Y)$ be a graph of groups and let $\pi:=\pi (\cG, Y)$.
Suppose that $\pi$ has a Sylow $p$-subgroup and that there is a storing homomorphism $\chi: \pi \to  G$ that takes 
a Sylow $p$-subgroup of $\pi$ to a Sylow $p$-subgroup of $G$. Then, there is an isomorphism
$$H^{*-1}(G; \hom(F_{ab}, \bF_p) ) \oplus H^*(G; \bF_p )\cong H^*( \pi ; \bF_p)$$
where $F$ is the kernel of $\chi$.
\end{theorem}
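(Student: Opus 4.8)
The plan is to analyze the Lyndon--Hochschild--Serre (LHS) spectral sequence of the extension $1 \to F \to \pi \xrightarrow{\chi} G \to 1$ with $\mathbb{F}_p$-coefficients and show it collapses. Since $F$ is a free group, its cohomology is concentrated in degrees $0$ and $1$: $H^0(F;\mathbb{F}_p) = \mathbb{F}_p$ and $H^1(F;\mathbb{F}_p) = \hom(F_{ab},\mathbb{F}_p)$, with the $G$-action on $H^1(F;\mathbb{F}_p)$ being the one dual to the conjugation action on $F_{ab}$ described before Lemma~\ref{lem:ZGIsom}. Hence the $E_2$-page has only two nonzero rows, $E_2^{s,0} = H^s(G;\mathbb{F}_p)$ and $E_2^{s,1} = H^s(G;\hom(F_{ab},\mathbb{F}_p))$, and the only possibly nonzero differential is $d_2 : E_2^{s,1} \to E_2^{s+2,0}$. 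So the entire content reduces to showing $d_2 = 0$, after which $E_\infty = E_2$ gives short exact sequences $0 \to H^s(G;\mathbb{F}_p) \to H^s(\pi;\mathbb{F}_p) \to H^{s-1}(G;\hom(F_{ab},\mathbb{F}_p)) \to 0$ in each degree; these then assemble into the claimed direct sum decomposition.

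The differential $d_2$ vanishes because the extension is split after restricting to a Sylow $p$-subgroup, and this is exactly where the storing hypothesis and the Sylow condition enter. First I would fix a Sylow $p$-subgroup $S$ of $\pi$; since $\pi$ has a Sylow $p$-subgroup and $\chi$ is a storing homomorphism, $S$ is contained in some vertex group (or a conjugate of one) up to conjugacy, so $\chi|_S$ is injective, and by hypothesis $\chi(S)$ is a Sylow $p$-subgroup of $G$. Thus the composite $\chi(S) \xrightarrow{(\chi|_S)^{-1}} S \hookrightarrow \pi$ is a group-theoretic section of $\chi$ over the subgroup $\chi(S) \leq G$. By naturality of the LHS spectral sequence applied to the map of extensions from $1 \to F \to \chi^{-1}(\chi(S)) \to \chi(S) \to 1$ into $1 \to F \to \pi \to G \to 1$, the differential $d_2$ for $\pi/G$ maps to the differential $d_2$ for the restricted extension; but the restricted extension is split, so its $d_2$ vanishes. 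This shows the image of $d_2$ for $\pi/G$ lies in the kernel of the restriction map $H^{*}(G;\mathbb{F}_p) \to H^{*}(\chi(S);\mathbb{F}_p)$. Since $\chi(S)$ is a Sylow $p$-subgroup of $G$, the restriction $H^*(G;\mathbb{F}_p) \to H^*(\chi(S);\mathbb{F}_p)$ is injective (the standard transfer argument), so $d_2 = 0$ on the bottom row; the same Sylow-restriction argument applied to the coefficient system $\hom(F_{ab},\mathbb{F}_p)$ kills $d_2$ out of the top row. Therefore $d_2 = 0$ everywhere.

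It remains to upgrade the collapse of the spectral sequence, which a priori only gives a filtration of $H^*(\pi;\mathbb{F}_p)$ with the two listed associated graded pieces, to an actual direct sum decomposition. For this I would use the splitting provided by Theorem~\ref{thm:LibmanSeeliger} (or, in the self-contained spirit, directly the section $s : \chi(S) \to \pi$ above together with a transfer/corestriction argument): the inflation map $H^*(G;\mathbb{F}_p) \to H^*(\pi;\mathbb{F}_p)$ is split injective, with splitting built from $\res^\pi_S$ followed by the transfer from $\chi(S)$ to $G$ under the identification $S \cong \chi(S)$. This exhibits $H^*(G;\mathbb{F}_p)$ as a direct summand of $H^*(\pi;\mathbb{F}_p)$, and the complementary summand is identified with $E_\infty^{*-1,1} = H^{*-1}(G;\hom(F_{ab},\mathbb{F}_p))$ via the edge map of the spectral sequence. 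Combining, we get the desired isomorphism.

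The main obstacle is the verification that $d_2 = 0$: one must be careful that the storing hypothesis really does produce an honest group-theoretic section over a Sylow $p$-subgroup of $G$ (this uses that a Sylow $p$-subgroup of $\pi$ is conjugate into a vertex group, which should follow from the Bass--Serre theory of the underlying graph of groups together with finiteness of $S$), and then that naturality of the LHS spectral sequence transports the vanishing of $d_2$ from the split restricted extension back to $\pi$, intersected with the injectivity of Sylow restriction. Identifying the $G$-module structure on $H^1(F;\mathbb{F}_p)$ with $\hom(F_{ab},\mathbb{F}_p)$ carrying the conjugation action is routine given Lemma~\ref{lem:ZGIsom} (or directly from functoriality of $H^1$), and the passage from graded pieces to a genuine direct sum is standard once the inflation splitting is in hand.
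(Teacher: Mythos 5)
Your proof is correct, but it takes a genuinely different route from the one in the paper. The paper works geometrically: it passes to the tree $T$ on which $\pi$ acts and the quotient graph $X=T/F$ on which $G$ acts, writes the augmented cellular cochain complex of $X$ as an exact sequence of permutation $\bF_p G$-modules $\bigoplus_v \bF_p[G/G_v]$ and $\bigoplus_e \bF_p[G/G_e]$, uses the Sylow hypothesis to split off the augmentation, applies Shapiro's lemma, and then compares the resulting long exact sequence with the Mayer--Vietoris sequence for the $\pi$-action on $T$ via the five lemma; Lemma \ref{lem:ZGIsom} is then needed to identify $H^1(X;\bF_p)$ with $\hom(F_{ab},\bF_p)$ as a $G$-module. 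You instead run the LHS spectral sequence of $1\to F\to\pi\to G\to 1$, which the paper explicitly flags in Remark \ref{rem:splitting} as the alternative approach (and uses later for Theorem \ref{thm:thmC}). Your key step is sound: $S\cap F=1$ because $F$ is torsion-free (or because $S$, being finite, fixes a vertex of $T$ and so lies in a conjugate of a vertex group, where $\chi$ is injective), so $\chi|_S$ gives a section of $\chi$ over the Sylow subgroup $\chi(S)$; the restricted extension then has injective inflation, hence vanishing $d_2$, and naturality plus injectivity of restriction to a Sylow subgroup kills $d_2$ for the full extension. Two small remarks: the unique differential in a two-row spectral sequence goes from the top row into the bottom row, so your single argument with trivial coefficients at the target already disposes of everything, and the final sentence about applying the same argument to the coefficient system $\hom(F_{ab},\bF_p)$ is redundant; and since the statement only asserts an isomorphism of graded $\bF_p$-vector spaces, the collapse at $E_2$ already yields the direct sum (every filtration of a vector space splits), so the transfer argument is only needed if one wants the canonical splitting compatible with Theorem \ref{thm:LibmanSeeliger}. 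What your route buys is brevity and independence from Lemma \ref{lem:ZGIsom}; what the paper's route buys is the explicit long exact sequence in terms of vertex and edge group cohomology, which is reused in the computations of Section \ref{sect:Examples}.
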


\begin{proof}
To simplify the notation we will denote the images of vertex groups $G_v$ and edge groups $G_e$ under $\chi : \pi \to G$ 
also by $G_v$ and $G_e$. Let $F=\ker \chi$ and $T$ be the tree on which $\pi$ acts with isotropy given by $(\cG, Y)$. 
Consider the $G$-action on the 
graph $X=T/F$. Since $T$ is connected, $X$ is also a connected graph.  

The cellular cochain complex for $X$ with coefficients in $R:=\bF_p$ gives an exact sequence of $RG$-modules
\begin{equation}\label{eqn:Sequence}
0\rightarrow R \rightarrow C^0 (X, R) \maprt{\delta ^0} C^1 (X, R)\rightarrow H^1(X;R) \rightarrow 0.
\end{equation}
The $G$-action on $X$ permutes the cells in $X$, hence we have 
$$C^0 (X,R)=\bigoplus \limits_{v\in OV}R[G/G_v] \ \text{ and } \ C^1 (X,R)=\bigoplus \limits_{e\in OE}R[G/G_e] $$
where $OE$ and $OV$ are orbit representative sets for edges and vertices in $X$, respectively. 
%Substituting this in the last exact sequence, we get
%\begin{equation*}\label{4exact}
%0\rightarrow R\rightarrow \prod\limits_{v\in OV}R[G/G_v] \rightarrow  \prod\limits_{e\in OE}R[G/G_e] \rightarrow  H^1(X;R)\rightarrow 0
%\end{equation*}
Since $\pi$ has a Sylow $p$-subgroup, there exists a vertex group $G_v$ containing a Sylow $p$-subgroup $S$. 
This means in $G$ the subgroup $G_v$ also includes a Sylow $p$-subgroup of $G$. From this we conclude that 
the map $\mu : R\rightarrow C_0(X, R)$ splits since $|G: G_v |$ is not divisible by $p$. 
We can divide the exact sequence in \ref{eqn:Sequence} into two sequences 
\begin{align*}%\label{1stexact}
0\rightarrow R\rightarrow C_0 (X, R)  \rightarrow  K\rightarrow 0
\end{align*}
\begin{align*}%\label{2ndexact}
0\rightarrow K \rightarrow  C_1 (X, R) \rightarrow  H^1(X;R)\rightarrow 0
\end{align*}
where the first sequence splits.	By Shapiro's lemma, the first sequence  gives an isomorphism
\begin{equation}\label{eqn:isom}
\bigoplus \limits_{v\in OV} H^*(G_v,R)    \cong  H^*(G;K)  \oplus H^*(G;R).
\end{equation}
From the second short exact sequence, we also obtain a long exact sequence.
%$$\cdots \rightarrow H^{*-1}(G,H^1(X;R)) \rightarrow  H^{*}(G;K)\rightarrow  \\ \bigoplus \limits_{e\in OE}H^{*}(G_e,R)  
%\rightarrow H^{*}(G,H^1(X;R))\rightarrow \cdots $$
By adding $H^*(G;R)$ to two consecutive terms in this sequence and by using the isomorphism in \ref{eqn:isom}, we get 
$$\cdots \rightarrow H^{*-1}(G,H^1(X;R)) \oplus H^* (G; R)  \rightarrow \bigoplus \limits_{v\in OV} H^*(G_v,R) 	
\maprt{(\delta ^0)^*}   \bigoplus \limits_{e\in OE}H^*(G_e,R) \rightarrow$$
$$ \rightarrow H^*(G,H^1(X;R)) \oplus H^{*+1} (G; R)  \rightarrow	 \cdots $$
The group $\pi =\pi (\cG, Y)$ acts on a tree $T$ with the same isotropy subgroups as the $G$-action on $X$. This gives
a similar long exact sequence from the $\pi$-action on $T$:
$$\cdots  \rightarrow H^*(\pi;R) \rightarrow  \bigoplus \limits _{v\in OV} H^*(G_v ; R )  
\rightarrow \bigoplus \limits_{e\in OE}H^* (G_e,R)  \rightarrow H^* (\pi ; R)\rightarrow \cdots $$
Since the maps in the middle coincide, by the five-lemma we obtain an isomorphism	
$$H^{*-1}(G,H^{1}(X;R)) \oplus H^*(G,R)\cong H^*(\pi ,R).$$
By Lemma \ref{lem:ZGIsom} we have 
$$H^1 (X; R) =\hom (H_1 (X); R) =\hom (F_{ab}, R).$$
Hence the proof is complete.
\end{proof}

\begin{remark}\label{rem:splitting} Theorem \ref{thm:Storing} is a generalization of \cite[Lemma 3.1]{BensonWilson}. 
The proof we give here is very similar to the proof in \cite{BensonWilson}.  There is an alternative approach to proving 
Theorem \ref{thm:Storing} using the Lyndon-Hochschild-Serre spectral sequence  \cite[Thm 6.3]{Brown} 
for the extension $$1 \to F \to \pi \to G \to 1.$$ 
We use this approach later in the proof of Theorem \ref{thm:thmC}.

%In this case, the LHS-spectral sequence is of the form $$E_2 ^{i,j} = H^i (G; H^j (F; R) )\Rightarrow H^{i+j} (\pi ; R).$$ 
%Since $F$ is a free group, there are only two nonzero horizontal lines. This gives a long exact sequence 
%$$ \cdots \to H^i (G ; R) \maprt{\chi^*} H^i (\pi; R) \to H^{i-1} (G; H^1 (F; R) ) \to H^{i+1} (G; R) \maprt{\chi^*} \cdots $$
%By restricting the map $\chi^*$ to a Sylow $p$-subgroup of $G$, we can show that $\chi^*$ is injective. This gives 
%a short exact sequence $$ 0 \to H^* (G;R) \maprt{\chi ^*} H^* (\pi ; R) \to H^{*-1} (G; H^1 (F; R) ) \to 0 $$ 
%which splits as a sequence of $\bF_p$-vector spaces. Note that restriction map $$\res^{\pi} _S : H^* (\pi; R) 
%\to H^* (S; R)$$ gives a map into the subring of stable elements. This subring is isomorphic to $H^*(\cF; R) \cong 
%H^* (G; R)$. The composition $\res ^{\pi} _S \circ \chi^* $ is equal to $\res ^G _S$ and the restriction map $\res^G_S$ is
%an isomorphism form $H^*(G; R)$ to the subring of stable elements $H^*(\cF; R)$. Hence we can conclude that
%$\res^{\pi}_S$ is a splitting for $\chi^*$ as an $\bF_p$-algebra map.  
\end{remark}

In Theorem \ref{thm:Storing}, the assumption that the fundamental group $\pi$ has a Sylow $p$-subgroup is necessary, as 
the following example illustrates.

\begin{example} Let $\pi =C_2 \ast C_2=\langle a_1, a_2 \, | \, a_i ^2, \text{ for } i=1,2  \rangle$. 
Then $G$ has no Sylow $2$-subgroup since the subgroups $\langle a_1 \rangle$ and $\langle a_2 \rangle$ are
not conjugate to each other in $\pi$. Note that if we take $G=C_2\times C_2$ and define the storing homomorphism
$\chi : \pi \to G$ by taking $a_1$ and $a_2$ to the generators of $G$, then the kernel of $\chi$ is the subgroup 
$F=\langle (a_1 a_2)^2 \rangle \cong \bZ$. We have $H^* (G; \bF_2 )\cong \bF_2 [x_1, x_2]$
where $\deg x_i=1$, and $H^i (\pi ; \bF_2 )\cong \bF_2\oplus \bF_2 $ for all $i\geq 1$. Hence, the isomorphism in 
Theorem \ref{thm:Storing} does not hold in this case. 
\end{example}

Now we are ready to prove Theorem \ref{thm:thmA}.

\begin{proof} Let $G$ be a finite group with Sylow $p$-subgroup $S$. Suppose that $G$ has no proper subgroups
that control $p$-fusion in $G$. Let $\pi _{LS}=\pi (\cG, Y)$ denote the infinite group realizing the fusion system 
$\cF :=\cF_S(G)$ constructed according to the Leary-Stancu model, as explained in Theorem \ref{thm:LSModel}. 
Let $\chi : \pi _{LS} \to G$ denote the group homomorphism that takes $S\leq \pi_{LS}$ to $S \leq G$ and the generators
$t_i$ to the  group elements $g_i \in G$ where $g_i$ is an element in $G$ such that $c_{g_i} =f_i : P_i \to Q_i$ for $i=1,\dots ,r$. 
Note that the image of $\chi$ controls $p$-fusion in $G$, hence by our assumption above $\chi$ is surjective.
The only vertex group of $\cG$ is $S$, and the restriction of $\chi$ to $S$ is injective, hence $\chi$ is a storing 
homomorphism. 

Now let $\pi_R$ denote the infinite group $\pi _R$ obtained using the Robinson model with vertex 
groups $N_G(P_i)$ for a collection of $p$-centric subgroups $\{P_i\}$, where $Y$ is a star-shaped graph with the center 
having vertex group $S$. In this case the group $\pi$ is generated by the subgroups $i_v (N_G(P_i))$ in $\pi$, 
so $\chi$ is defined as a map that takes the subgroups $i_v(N_G(P_i))$ injectively to the subgroups $N_G(P_i)$ 
in $G$. It is easy to see that in this case too, $\chi$ is a storing homomorphism. 
  
In both cases there is a storing homomorphism $\chi : \pi \to G$. Since the kernel of a storing homomorphism is 
a free group, this gives a group extension $1 \to F \to \pi \to G \to 1 $, where $F$ is a free group. Applying 
Theorem \ref{thm:Storing}  to the storing homomorphism $\chi : \pi \to G$ gives the isomorphism  in the statement of the theorem.
\end{proof}

As a corollary of Theorem \ref{thm:thmA}, we obtain the following.

\begin{corollary} Let $G$ and $\pi$ be as in Theorem \ref{thm:thmA}. Then the kernel of the restriction map
$\res^\pi _S : H^*(\pi ; \bF_p) \to H^* (S; \bF_p)$ is isomorphic  to $H^{*-1} (G ; \hom (F_{ab}; \bF_p ))$.  
\end{corollary}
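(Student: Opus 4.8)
The plan is to combine Theorem~\ref{thm:thmA} with Theorem~\ref{thm:LibmanSeeliger} of Libman and Seeliger. By Theorem~\ref{thm:thmA}, the group $\pi$ fits into an extension $1\to F\to\pi\to G\to 1$ with $F$ free, and there is an isomorphism $H^{*-1}(G;\hom(F_{ab},\bF_p))\oplus H^*(G;\bF_p)\cong H^*(\pi;\bF_p)$. On the other hand, by Theorem~\ref{thm:LibmanSeeliger}, the restriction map $\res^\pi_S:H^*(\pi;\bF_p)\to H^*(S;\bF_p)$ splits as an $\bF_p$-algebra map with image $H^*(\cF;\bF_p)$, giving $H^*(\pi;\bF_p)\cong H^*(\cF;\bF_p)\oplus\ker(\res^\pi_S)$. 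Since $\cF=\cF_S(G)$ for the finite group $G$, we have $H^*(\cF;\bF_p)\cong H^*(G;\bF_p)$ (both equal the ring of $\cF$-stable elements in $H^*(S;\bF_p)$, which is the image of $\res^G_S$ by the usual stable-elements theorem). Comparing the two direct sum decompositions of $H^*(\pi;\bF_p)$, I would deduce that $\ker(\res^\pi_S)\cong H^{*-1}(G;\hom(F_{ab},\bF_p))$.

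The point requiring a little care is that a priori we only know the two graded $\bF_p$-modules $H^*(\cF;\bF_p)\oplus\ker(\res^\pi_S)$ and $H^*(G;\bF_p)\oplus H^{*-1}(G;\hom(F_{ab},\bF_p))$ are abstractly isomorphic; to conclude about the complement $\ker(\res^\pi_S)$ one should check that the two copies of $H^*(G;\bF_p)\cong H^*(\cF;\bF_p)$ sitting inside $H^*(\pi;\bF_p)$ are actually the same submodule. First I would observe that the splitting in Theorem~\ref{thm:LibmanSeeliger} is realized concretely by the image of the transfer/restriction pair, i.e. by the composite $H^*(\pi;\bF_p)\xrightarrow{\res^\pi_S} H^*(S;\bF_p)$ followed by a section; and that the copy of $H^*(G;\bF_p)$ produced in the proof of Theorem~\ref{thm:Storing} (hence of Theorem~\ref{thm:thmA}) is exactly the image of $\chi^*:H^*(G;\bF_p)\to H^*(\pi;\bF_p)$, since there it arises as the split summand $H^*(G;R)$ coming from the $RG$-module splitting $R\to C^0(X,R)$, which is induced by inflation along $\chi$. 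Then I would note that $\res^\pi_S\circ\chi^*=\res^G_{S}$ (because $\chi$ maps the Sylow $p$-subgroup $S\leq\pi$ isomorphically to $S\leq G$), whose image is precisely $H^*(\cF;\bF_p)$; and conversely $\chi^*$ is a section of $\res^\pi_S$ composed with that isomorphism, so $\im\chi^*$ is a complement to $\ker\res^\pi_S$. Hence the two decompositions of $H^*(\pi;\bF_p)$ share the summand $\im\chi^*\cong H^*(G;\bF_p)$, and cancelling it leaves $\ker(\res^\pi_S)\cong H^{*-1}(G;\hom(F_{ab},\bF_p))$ as graded $\bF_p$-modules.

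The main obstacle, such as it is, lies in matching these two summands rather than in any hard computation: one must be slightly careful that ``$\ker(\res^\pi_S)$'' in Theorem~\ref{thm:LibmanSeeliger} and the summand $H^{*-1}(G;\hom(F_{ab},\bF_p))$ in Theorem~\ref{thm:thmA} are complements to the \emph{same} copy of $H^*(G;\bF_p)$ inside $H^*(\pi;\bF_p)$. Once the identification of both complementary summands with $\im\chi^*$ is in place (using that $\chi$ restricts to an isomorphism on Sylow $p$-subgroups, so that $\res^\pi_S\circ\chi^* = \res^G_S$ and the image of the latter is the $\cF$-stable subring $H^*(\cF;\bF_p)\cong H^*(G;\bF_p)$), the statement follows formally. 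I would present the argument in this order: (1) recall the decomposition from Theorem~\ref{thm:thmA} and identify its $H^*(G;\bF_p)$-summand with $\im\chi^*$; (2) recall Theorem~\ref{thm:LibmanSeeliger} and note $H^*(\cF;\bF_p)\cong H^*(G;\bF_p)$ with $\im\chi^*$ again the relevant complement; (3) cancel the common summand to conclude $\ker(\res^\pi_S)\cong H^{*-1}(G;\hom(F_{ab},\bF_p))$.
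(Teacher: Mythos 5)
Your proposal is correct and follows essentially the same route as the paper: combine the decomposition of Theorem \ref{thm:thmA} with the Libman--Seeliger splitting of Theorem \ref{thm:LibmanSeeliger}, identify the image of $\res^\pi_S$ with $H^*(\cF;\bF_p)\cong H^*(G;\bF_p)$, and cancel the common summand. Your extra care in identifying both copies of $H^*(G;\bF_p)$ with $\im\chi^*$ is a welcome refinement that the paper glosses over, though the cancellation is already legitimate on abstract grounds since $H^*(\pi;\bF_p)$ is finite-dimensional in each degree ($\pi$ being the fundamental group of a finite graph of finite groups).
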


\begin{proof} The image of the restriction map 
$\res^\pi _S$ is isomorphic $H^*(\cF ; \bF_p) \cong H^*(G; \bF_p)$.   
Hence Theorem \ref{thm:thmA} gives the desired isomorphism.  
\end{proof}

%--------------------------------------------

\section{An Infinite Family of Examples}\label{sect:Examples}

We start with an easy example to illustrate that infinite groups obtained using the Leary-Stancu model 
may have cohomology groups that are not isomorphic to the cohomology of the fusion systems that they realize.

\begin{example}\label{ex:1}
Let $G=S_3=\langle a, b \, | \, b^3=a^2=1, aba=b^2 \rangle$ and $R=\bF _3$. The Sylow $3$-subgroup of $G$ is 
$S=\langle b \rangle \cong C_3$. The Leary-Stancu model is the infinite group 
$$\pi =\langle b, t \, | \, b^3=1, tbt^{-1}=b^2 \rangle \cong C_3 \rtimes \bZ.$$
The storing homomorphism $\chi : \pi \to G$ takes $t \in \pi$ to $a\in G$, so the kernel of $\chi$ is 
$F=\langle t^2 \rangle \cong \bZ$. The $G$-action on $F$ is trivial, hence Theorem \ref{thm:Storing} gives that 
\begin{equation}\label{eqn:Ex1Isom}
H^*(\pi ; \bF_3) \cong H^* (S_3; \bF_3) \oplus H^{*-1} (S_3;\bF_3).
\end{equation}
The cohomology ring of $C_3$ is $H^*(C_3; \bF_3 )= \bigwedge_{\bF_3} (x) \otimes \bF_3 [y]$, where $\deg x=1$ 
and $\deg y=2$, and the cohomology ring of $S_3$ is the subalgebra $$H^* (S_3; \bF_3) = \bigwedge \nolimits _{\bF_3} (xy)\otimes \bF_3 [y^2].$$
We can calculate the cohomology of $\pi$ using the sequence $1 \to C_3 \to \pi \to \bZ \to 1$. The LHS-spectral sequence 
has only two nonzero vertical lines and $d_1: H^i (C_3; \bF_3) \to H^i (C_3, \bF_3)$ is identity only at dimensions $i$ 
where $i \equiv 1,2 $ mod $4$.  From this calculation, we can easily see that $H^* (\pi; \bF_3) \not \cong H^* (S_3; \bF_3)$
and $\ker( \res^{\pi } _S ) \cong H^{*-1} (S_3;  \bF_3)$. \qed
\end{example}

We now give another example of storing homomorphisms $\pi \to S_3$ where $\pi$ is an amalgamation 
of two finite groups and $S_3$ acts nontrivially on $F$.

\begin{example}\label{ex:2}
Let $\pi =S_3 \ast _{C_3} S_3$, and $R=\bF _3$. We can give a presentation for $\pi$ as follows:
$$\pi =\langle b, a_1, a_2 \, | \, a_i ^2=1, a_i b a_i=b^2 \text{ for } i=1,2 \rangle.$$
The group $G=S_3$ is a store of $\pi$ with storing homomorphism which takes both $a_1$ and $a_2$ 
to $a \in G$. The kernel of $\chi$ is $F=\langle a_1a_2 \rangle \cong \bZ$. In this case $G=S_3$ acts 
nontrivially on $F$ since $a_1 (a_1a_2)a_1=a_2a_1=(a_1a_2)^{-1}$. We usually denote this one-dimensional 
$\bZ G$-module by $\widetilde \bZ$. By Theorem \ref{thm:Storing}, we have $$H^*(\pi ; \bF_3 ) \cong 
H^* (S_3;  \bF_3 ) \oplus H^{*-1} (S_3 ; \widetilde \bF_3 ).$$ 
We can calculate the cohomology groups $H^* (S_3; \widetilde \bF_3) $ using the sequence of $\bF_3 G$-modules 
$0 \to \bF_3 \to \bF_3[G/S]\to \widetilde \bF_3 \to 0$. We obtain that $H^* (S_3; \widetilde \bF_3 ) \cong \bF_3 $ 
for $i\equiv 1,2 $ mod $4$, and $0$ otherwise.   The cohomology of $\pi$ can be calculated using the 
long exact sequence for groups acting on a tree. From these we can verify that the isomorphism above holds.
In this case the kernel of the restriction map to the Sylow $3$-subgroup is $H^{*-1} (S_3; \widetilde \bF_3)$. \qed
\end{example}

In the rest of this section we consider the $2$-fusion system of the group $GL(n, 2)$ and show that
the cohomology of the infinite group $\pi _R$ constructed using the Robinson model is not isomorphic to the cohomology 
of the $2$-fusion system for $n \geq 5$.  This gives an infinite family of examples with this property.
Examples of groups with this property were already known. In \cite[Prop 6.8]{Seeliger}, 
it is shown that the mod-$2$ cohomology of $G=C_2^3 \rtimes GL(3, 2)$ is not isomorphic to the cohomology
of $\pi_R$.

To construct an infinite group using the Robinson model for the $2$-fusion system of $GL(n, 2)$, we must understand 
all fully normalized, $\cF$-radical, $\cF$-centric subgroups for the fusion system $\cF=\cF_S(G)$, where $G=GL(n, 2)$ and
$S$ is a Sylow $2$-subgroup of $G$. Since $GL(n, p)$ is an algebraic group we will quote some standard
results from \cite[Sec 6.8]{Benson2} and \cite{MalleTesterman} to describe its $p$-radical and $p$-centric subgroups.
We also refer to \cite[Appendix B]{LibmanViruel} for some of the results below.

Let $S$ be the subgroup of $G=GL(n, 2)$ consisting of the upper triangular matrices. Since the order 
of $S$ is $2^{(n-1)(n-2)/2}$ and the order of $G$ is $(2^n-1)(2^n-2)\cdots (2^n-2^{n-1} )$, the index $|G:S|$ is odd. 
Hence $S$ is a Sylow $2$-subgroup of $G$. The Borel subgroups of $G$ are the conjugates of $S$ and $N_G(S)=S$
(see \cite[Thm 6.12]{MalleTesterman}). Parabolic subgroups of $G$ are stabilizers of flags $0=V_0 < \cdots < V_k=\bF_p ^n$,
so every parabolic subgroup is conjugate to a subgroup $N$ consisting of matrices of the form 
$$\left[\begin{array}{c|c|c}
* & * & * \\ 
\hline
0 & * & * \\
\hline
0 & 0 & * \\
\end{array}\right] 
$$
The unipotent radical of $N$ is the subgroup $U$ of matrices of the form 
$$\left[\begin{array}{c|c|c}
I & * & * \\
\hline
0 & I & * \\
\hline
0 & 0 & I \\
\end{array}\right] 
$$

We now state a special case of the Borel-Tits theorem (see \cite[Thm 6.8.4]{Benson2}) to identify the $p$-radical subgroups
of $G$.

\begin{theorem}[Borel-Tits]\label{BorelTit}
If $G=GL(n, p)$ then a $p$-subgroup $U$ is $p$-radical if and only if $N_G(U)$ is parabolic 
and $U$ is its unipotent radical.
\end{theorem}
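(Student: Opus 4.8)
The plan is to prove the two directions separately, since one of them is nearly formal while the other carries the real content. First I would dispose of the easy implication: if $N_G(U)$ is a parabolic subgroup $P$ and $U$ is its unipotent radical $U_P = O_p(P)$, then $U$ is certainly a $p$-subgroup, and we must check it is $p$-radical, i.e.\ $O_p(N_G(U)/U) = 1$. But $N_G(U) = P$ by hypothesis, so $N_G(U)/U = P/U_P$, and the Levi decomposition $P = U_P \rtimes L$ identifies this quotient with a Levi subgroup $L \cong GL(n_1,p)\times\cdots\times GL(n_k,p)$, which has no nontrivial normal $p$-subgroup (its $p$-core is trivial because it is a product of general linear groups, each of which has trivial $p$-core since $p \nmid |Z(GL(m,p))|$ and $SL$ is essentially simple for $m\ge 2$ in this characteristic, the small cases being checked directly). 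Hence $O_p(N_G(U)/U)=1$ and $U$ is $p$-radical.

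For the harder direction, suppose $U$ is a $p$-radical $p$-subgroup of $G=GL(n,p)$. The key input is the general Borel–Tits theorem for connected reductive groups over a field of characteristic $p$: for any nontrivial unipotent subgroup $U$ of the algebraic group $\mathbf{GL}_n$, there is a proper parabolic subgroup $\mathbf{P}$ such that $N_{\mathbf{GL}_n}(U) \le \mathbf{P}$ and $U \le R_u(\mathbf{P})$ (the unipotent radical). Applied to the finite group: one shows that $N_G(U)$ is contained in a parabolic subgroup $P$ of $G$ with $U \le U_P := O_p(P)$. Now choose such a $P$ \emph{minimal} with this property, and set $V = U_P$. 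Then $V$ is a normal $p$-subgroup of $P$ containing $U$, and $P \le N_G(V) \le N_G(V)$; I would argue that in fact $N_G(V) = P$ and, more to the point, that $V \le O_p(N_G(U))$. Indeed $V$ normalizes $U$? — this is the delicate spot and must be handled by the minimality/maximality setup rather than asserted: the correct statement to extract from Borel–Tits is that $N_G(U)$ normalizes $R_u(\mathbf{P}^\circ)$ for the parabolic $\mathbf{P}$ generated in the proof, so $R_u(\mathbf{P})(\mathbb{F}_p) = U_P$ is normalized by $N_G(U)$, hence $U\cdot U_P$ is a $p$-subgroup normalized by $N_G(U)$, so $U \cdot U_P / U \le O_p(N_G(U)/U)$. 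Since $U$ is $p$-radical, this forces $U_P \le U$, and combined with $U \le U_P$ we get $U = U_P$. It then remains to see $N_G(U) = P$: we have $N_G(U) \le P$ by construction, and $P \le N_G(U_P) = N_G(U)$ since $U_P$ is normal in $P$; hence $N_G(U) = P$ is parabolic with unipotent radical $U$, as claimed.

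The main obstacle is making the algebraic-group Borel–Tits theorem interact cleanly with the finite group $GL(n,p)$ — in particular identifying $R_u(\mathbf{P})(\mathbb{F}_p)$ with $O_p(P)$ for the finite parabolic $P = \mathbf{P}(\mathbb{F}_p)$, and extracting the normalization statement ``$N_G(U)$ normalizes $U_P$'' in the finite setting. For $GL(n,p)$ this is standard (parabolics are flag stabilizers, $\mathbb{F}_p$-points of the unipotent radical are exactly the matrices that are identity on the associated graded of the flag and strictly upper triangular across blocks, and these form $O_p(P)$), so I would cite \cite[Thm 6.8.4]{Benson2} for the precise finite-group formulation and present the argument above as the deduction of the stated equivalence from it, keeping the exposition brief since the statement is quoted as a known special case of a theorem in the literature.
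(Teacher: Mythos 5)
The paper does not prove this statement at all: it is quoted as a known special case of the Borel--Tits theorem with a citation to \cite[Thm 6.8.4]{Benson2}, so there is no in-paper argument to compare against. Your proposal ultimately also defers to that reference, which is consistent with the paper's treatment, and the surrounding sketch of how the equivalence is deduced is the standard one. The easy direction is fine: $N_G(U)/U\cong L$ is a product of general linear groups over $\bF_p$, each with trivial $p$-core (this is most cleanly seen by intersecting the upper- and lower-triangular unipotent Sylow subgroups, rather than via the centre/simplicity detour you sketch).

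There is, however, one step in the hard direction that does not work as written. You argue that since $N_G(U)$ normalizes $U_P=O_p(P)$ and $U\le U_P$, the image $U\cdot U_P/U$ lands in $O_p(N_G(U)/U)$. But $U\cdot U_P=U_P$ need not be contained in $N_G(U)$ at all, so it has no image in $N_G(U)/U$; knowing that $N_G(U)$ normalizes $U_P$ does not give that $U_P$ normalizes $U$. The correct repair is: $U_P\cap N_G(U)=N_{U_P}(U)$ is normalized by $N_G(U)$ (both factors are), hence is a normal $p$-subgroup of $N_G(U)$ containing $U$; radicality of $U$ forces $N_{U_P}(U)=U$. Since $U_P$ is a $p$-group and normalizers grow in nilpotent groups, $N_{U_P}(U)=U$ with $U\le U_P$ forces $U=U_P$. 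With that substitution the rest of your deduction ($N_G(U)\le P=N_G(U_P)=N_G(U)$, so $N_G(U)=P$) goes through.
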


We also have the following observation.

\begin{lemma}\label{lem:Centric}
Let $S$ be the group of upper triangular matrices in $G=GL(n, 2)$ and $\cF=\cF_S(G)$. 
Then any unipotent radical $U$ of a parabolic group $P$ containing $S$ is $\cF$-centric.
\end{lemma}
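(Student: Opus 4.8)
The plan is to reduce the claim to the standard characterization of $\cF$-centric subgroups: a subgroup $P\leq S$ is $\cF_S(G)$-centric if and only if $P$ is $p$-centric in $G$, i.e. $Z(P)$ is a Sylow $p$-subgroup of $C_G(P)$, equivalently $C_G(P)=Z(P)\times O_{p'}(C_G(P))$. So it suffices to show that for a parabolic $P=UL$ with $S\leq P$ and unipotent radical $U$, we have $C_G(U)\leq U$; since $Z(U)\leq C_G(U)$ always, this forces $C_G(U)=Z(U)$, which is a $2$-group, hence certainly a Sylow $2$-subgroup of itself, and then $U$ is $p$-centric and therefore $\cF$-centric.

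First I would set up coordinates: after conjugating, $P$ is the stabilizer of a flag $0=V_0<V_1<\cdots<V_k=\bF_2^n$, and $U$ consists of those $g\in GL(n,2)$ acting as the identity on each quotient $V_i/V_{i-1}$. The key computation is then: if $g\in GL(n,2)$ commutes with every element of $U$, then $g\in U$. I would argue this by a direct linear-algebra argument. For each $i$, $U$ contains all "elementary" transvections $I+e$ where $e$ is a linear map $V_i/V_{i-1}\to V_{j}/V_{j-1}$ for $j<i$ lifted appropriately — more precisely, for any $v\in V_i$ and any functional vanishing on $V_{i-1}$ one gets a transvection in $U$. Commuting with all of these pins down $g$: it forces $g$ to preserve each $V_i$ (so $g\in P$) and to act trivially on each $V_i/V_{i-1}$ (so $g\in U$). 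Concretely, over $\bF_2$ the transvections $t=I+\alpha\otimes v$ with $v\in V_{i}$, $\alpha\in V_{i-1}^{\perp}$ generate $U$, and $gt g^{-1}=I+\alpha\!\circ\! g^{-1}\otimes gv$; demanding this equal a fixed element of $U$ for all such $t$ yields $gV_i\subseteq V_i$ and $g$ acts as a scalar (hence trivially, over $\bF_2$) on each graded piece, i.e. $g\in U$.

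The main obstacle I anticipate is making the transvection bookkeeping clean enough to conclude $C_G(U)\leq U$ without an excessive case analysis over the flag steps; the cleanest route is probably to use that $U$ acts on $\bF_2^n$ with the $V_i$ as the terms of a characteristic filtration and that $[U,\bF_2^n]=V_{k-1}$-type relations identify the flag intrinsically from $U$, so any $g$ centralizing $U$ must preserve the flag and act unipotently with respect to it. Alternatively, one can cite the Borel–Tits setup: $Z(P)$ and $C_G(U)$ are controlled by the $L$-module structure of $\operatorname{Lie}(U)$, and since $U$ contains a Sylow $p$-subgroup-worth of the unipotent part, $C_G(U)$ is a $p$-group; I would present the elementary matrix argument as the primary proof since it is self-contained and the excerpt only needs the $\cF$-centric conclusion. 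Once $C_G(U)=Z(U)$ is established, invoking \cite[Prop 4.43]{Craven} (quoted in Section \ref{subsect:Fusion}) finishes the proof.
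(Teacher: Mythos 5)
Your proof is correct in outline, but it takes a slightly different and somewhat heavier route than the paper. The paper first observes that $\cF$-centricity is inherited upwards ($U\leq V$ with $U$ $\cF$-centric forces $V$ $\cF$-centric), so it suffices to treat the \emph{minimal} unipotent radicals, i.e.\ those of maximal parabolics containing $S$; for such a $U_m$ it then uses saturation ($U_m\lhd S$ is fully normalized, hence fully centralized, so one only needs $C_S(U_m)=Z(U_m)$) and verifies this with the clean two-block computation $AM=MC$ for all $M$. You instead aim at the stronger statement $C_G(U)=Z(U)$ for the unipotent radical of an \emph{arbitrary} parabolic, via transvections, and then invoke the $p$-centric $\Leftrightarrow$ $\cF$-centric characterization from \cite[Prop 4.43]{Craven}. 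That stronger statement is true (the paper itself remarks after its proof that the argument extends to show $C_G(U)=Z(U)$), and your route buys a conjugation-invariant conclusion without appealing to fully-normalized/fully-centralized, at the cost of a messier general-flag computation that you only sketch. Two small points to fix if you write it out: the transvection $I+\alpha\otimes v$ with $v\in V_i$ and $\alpha\in V_{i-1}^{\perp}$ need \emph{not} lie in $U$ (it can act nontrivially on $V_i/V_{i-1}$); you want $\alpha\in V_i^{\perp}$ and $v\in V_i$ (matching your earlier, correct description via maps $V_i/V_{i-1}\to V_j/V_{j-1}$ with $j<i$). And the final step --- that commuting with all such transvections forces $g$ to preserve the flag and act trivially on the graded pieces --- is asserted rather than proved; it is standard and does work (e.g.\ the fixed space of $U$ recovers $V_1$, and one inducts up the flag, or one reduces to the maximal-parabolic case as the paper does), but it is exactly the bookkeeping you flag as the obstacle and should be carried out.
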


\begin{proof} If $U$ is $\cF$-centric and $V \geq U$, then $V$ is also $\cF$-centric. 
We know that the maximal parabolic subgroup corresponds to the minimal unipotent radicals. Then, it is enough 
to prove that the statement holds for all maximal parabolic subgroups containing $S$.
	
Take any maximal parabolic subgroup $N_G(U)$ containing $S$, where $U$ is a subgroup of 
the form 	
\[
U_m=\left[ {\begin{array}{cc}
I_m & M_{m,n-m}(\mathbb{F}_2) \\
0 & I_{n-m} \\
\end{array} } \right]      
\]
for some $m$. Then $U \lhd S$, hence $U$ is fully $\cF$-normalized. This implies that fully 
$\cF$-centralized, and hence it is enough to show that $C_S(U)=Z(U)$.
Take any $s\in S$ centralizing $U_m$. If we write	
\[
s=\left[ {\begin{array}{cc} A & B \\ 0 & C \\
\end{array} } \right]       
\]
where $A$ and $C$ are upper triangular matrices with diagonal entries equal to $1$, then the equation $su=us$ 
gives that we must have $AM=MC$ for any  $M\in  M_{m,n-m}(\bF_2) $.
Fix any $1\leq i \leq m$ and $1\leq j \leq m-n$. Choosing $M$ to have all entries 0 except the $(i,j)$-th entry, which is equal to 1, 
the equality $AM=MC$ gives that $c_{j,k}=0$ for $k\neq j$ and $a_{l,i}=0$ for $l\neq i$.  This gives $A=I_m$ 
and $C=I_{n-m}$. Hence $s$ lies in $U_m$. We conclude that $U$ is $\cF$-centric.
\end{proof}

The argument above can be extended to show that $C_G(U)=Z(U)$ for every unipotent radical $U$ normal in $S$.
This gives that $C_G'(U)=1$ and $N_G(U)/U=N_G(U)/C_G(U)U$ for these subgroups. In particular, $U$ is $\cF$-radical
since $U$ is $p$-radical in $G$. We conclude the following.

\begin{theorem}\label{U is F-alperin iff NU is parabolic}
Let $G=GL(n, 2)$. The subgroup of upper triangular matrices $S$ in $G$ is a Sylow 2-subgroup of $G$. 
Let $\mathcal{F}=\mathcal{F}_S(G)$. Then $U$ is fully normalized, $\cF$-radical, $\cF$-centric 
subgroup of $S$ if and only if $N_G(U)$ is parabolic containing $S$ and $U$ is its unipotent radical.
\end{theorem}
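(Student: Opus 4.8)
The plan is to combine the Borel--Tits theorem (Theorem~\ref{BorelTit}), which identifies the $p$-radical subgroups of $G=GL(n,2)$, with Lemma~\ref{lem:Centric} and the remarks following it, and then to reconcile the notions of $p$-radical/$p$-centric with $\cF$-radical/$\cF$-centric in this particular situation. First I would prove the ``only if'' direction: suppose $U\leq S$ is fully normalized, $\cF$-radical and $\cF$-centric. By Lemma~\ref{lem:CentricRadical} these properties force $U$ to be $p$-centric and $p$-radical in $G$, and then Theorem~\ref{BorelTit} gives that $N_G(U)$ is parabolic with $U$ its unipotent radical. It remains to arrange that $N_G(U)$ contains $S$ itself; since $U$ is fully normalized, $N_S(U)$ is a Sylow $2$-subgroup of $N_G(U)$ (by \cite[Prop 1.38]{Craven}), and a parabolic subgroup that already contains a Sylow $2$-subgroup can be conjugated within its own $G$-conjugacy class to one containing $S$ while fixing $U$ up to $S$-conjugacy; after replacing $U$ by this $S$-conjugate (which does not change any of the $\cF$-theoretic hypotheses, nor the conclusion, since the statement is about the $\cF$-conjugacy class) we get $S\leq N_G(U)$. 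Alternatively, and more cleanly, observe that since $U$ is fully normalized and normal in $N_S(U)$, one can enlarge $U$ inside $S$ and use that the parabolic $N_G(U)$ contains $N_S(U)$, which is Sylow in it; a standard argument then shows $N_S(U)=S$, i.e. $U\lhd S$, so $S\leq N_G(U)$.

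For the ``if'' direction: suppose $N_G(U)$ is parabolic, contains $S$, and $U$ is its unipotent radical. Then $U\lhd S$ automatically, so $U$ is fully normalized in $\cF$. Lemma~\ref{lem:Centric} gives that $U$ is $\cF$-centric. By Theorem~\ref{BorelTit}, $U$ is $p$-radical in $G$. The paragraph following Lemma~\ref{lem:Centric} records that for such $U$ one has $C_G(U)=Z(U)$, hence $C_G'(U)=1$ and $N_G(U)/U = N_G(U)/PC_G(U)U$ in the notation there; therefore $O_p(N_G(U)/UC_G(U))=O_p(N_G(U)/U)=1$, which is exactly the condition that $U$ be $\cF$-radical (using the characterization ``$P$ is $\cF_S(G)$-radical iff $O_p(N_G(P)/PC_G(P))=1$'' stated in Section~\ref{subsect:Fusion}). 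So $U$ is fully normalized, $\cF$-radical, and $\cF$-centric, as required.

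The step I expect to be the main obstacle is the bookkeeping in the ``only if'' direction needed to ensure $S\leq N_G(U)$ rather than merely ``$N_G(U)$ is $G$-conjugate to a parabolic containing $S$.'' The cleanest route is: since $U$ is fully normalized, $N_S(U)$ is Sylow in $N_G(U)$, and since $N_G(U)$ is parabolic with unipotent radical $U$, every element of $N_G(U)$ normalizes $U$; if $N_S(U)\neq S$ then some $x\in N_S(U)\setminus N_S(U)$ — rather, pick $s\in S$ with $s\notin N_G(U)$ of smallest possible order in the quotient, and note $sN_S(U)s^{-1}\leq sN_G(U)s^{-1}=N_G(sUs^{-1})$, a different parabolic; comparing Sylow subgroups of $G$ forces $N_S(U)=S$, whence $U\lhd S$ and $S\leq N_G(U)$. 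I would write this out carefully, as it is the only place where the abstract fusion-system input must be turned into a concrete statement about which parabolic one is looking at; everything else is a direct citation of the already-established Theorems~\ref{BorelTit}, Lemma~\ref{lem:CentricRadical}, Lemma~\ref{lem:Centric} and the intervening discussion.
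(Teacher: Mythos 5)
Your overall strategy coincides with the paper's: both directions rest on Theorem~\ref{BorelTit}, Lemma~\ref{lem:CentricRadical}, Lemma~\ref{lem:Centric} and the remark following it, and your ``if'' direction is essentially identical to the paper's argument (modulo the stray $P$ in ``$N_G(U)/PC_G(U)U$''). The one step you yourself flag as the obstacle --- getting $S\leq N_G(U)$ for the given $U$, not merely for a conjugate --- is also the one step you do not actually close. Your first route, replacing $U$ by an $S$- or $G$-conjugate, does not prove the theorem as stated: neither ``fully normalized'' nor ``$N_G(U)\supseteq S$'' is invariant under $\cF$-conjugacy, so the statement is genuinely about the specific subgroup $U$ and cannot be transported along the conjugacy class. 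Your second route begins correctly (full normalization gives that $N_S(U)$ is a Sylow $2$-subgroup of $N_G(U)$, by \cite[Prop 1.38]{Craven}), but the conclusion ``comparing Sylow subgroups of $G$ forces $N_S(U)=S$'' is asserted rather than argued, and the intervening sentence about picking $s\in S\setminus N_G(U)$ does not cohere into a proof.

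The missing observation is a one-liner: a parabolic subgroup of $GL(n,2)$ contains a Borel subgroup, and Borel subgroups are Sylow $2$-subgroups of $G$ since $|G:S|$ is odd; hence every Sylow $2$-subgroup of the parabolic $N_G(U)$ has order $|S|$, so $|N_S(U)|=|S|$, i.e.\ $U\lhd S$ and $S\leq N_G(U)$. With that sentence inserted, your second route is complete and arguably cleaner than the paper's. The paper instead argues concretely: choose a Borel $B\leq N_G(U)$ and $g\in G$ with $gBg^{-1}=S$, set $P=gUg^{-1}\leq S$, so that $N_G(P)\supseteq S$ and $N_S(P)=S$; then the definition of fully normalized gives $|N_S(U)|\geq|N_S(P)|=|S|$, whence $N_S(U)=S$. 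Either way the fusion-theoretic hypothesis is converted into the group-theoretic containment, but you should write one of these two arguments out explicitly rather than leaving it at ``comparing Sylow subgroups.''
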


\begin{proof}
The first sentence is explained above.  Let $U$ be a fully normalized, $\cF$-centric, and $\cF$-radical subgroup in $S$. 
By Lemma \ref{lem:CentricRadical}, an $\cF$-centric, $\cF$-radical subgroup of $S$ is $p$-radical, $p$-centric in $G$.  
Hence by Theorem \ref{BorelTit}, $N_G(U)$ is parabolic 
and $U$ is its unipotent radical. Since $N_G(U)$ is parabolic, $N_G(U) \supset B$ for some Borel subgroup $B$. 
Since Borel subgroups are conjugate, there exists $g\in G$ such that $S=gBg^{-1}$. Let $P=gUg^{-1}$. Then 
$N_G(P)=gN_G(U)g^{-1}\supset gBg^{-1}=S$. Since $U$ is fully normalized, we have 
$|N_S(U)|\geq |N_S(P)|$. So $N_S(P)=S$ gives that $N_S(U)=S$, which means $N_G(U)$ contains $S$ as desired.
	
For the other direction, assume that $U$ is a subgroup of $S$ such that $N_G(U)$ is parabolic containing $S$ and $U$ is 
its unipotent radical. By Theorem \ref{BorelTit}, $U$ is $p$-radical. By Lemma \ref{lem:Centric}, $U$ is $\cF$-centric.
By the remark after the proof of Lemma \ref{lem:Centric},
$U$ is $\mathcal{F}$-radical. Since $N_S(U)=S$, we can also say that $U$ is fully normalized.  
\end{proof}

%In the following theorem, we consider the Robinson model realizing $2$-fusion system $\cF=\cF_S(G)$ for the group $G=GL(n,2)$.
%
%
%\begin{theorem}\label{thm:mainthm2}
%Let $G=GL(n,2)$ for $n\geq 5$. Let $S$ be the Sylow-$2$ subgroup consisting of upper triangular matrices in $G$. 
%Let $(\mathcal{G},Y)$ be the graph of groups for the Robinson model constructed for $\mathcal{F}=\mathcal{F}_S(G)$, 
%and let $\pi:=\pi (\cG, Y)$. Then 
%$H^2(\cF; \bF_2) \not \cong H^2(\pi, \bF_2)$.     
%\end{theorem}

Now we are ready to prove Theorem \ref{thm:thmB}

\begin{proof}[Proof of Theorem \ref{thm:thmB}]
By  \cite[Table 6.1.3]{GLS}, we have  $H^2(\cF; \bF_2 )=H^2(G; \bF_2)=0$ for $n\geq 5$. We will 
show that $H^2(\pi; \bF_2)\neq 0.$ The vertex groups of $\pi$ are the subgroup $S$ and the normalizers $N_G(P_i)$ of 
fully normalized, $\cF$-radical, $\cF$-centric subgroups $P_i$ of $S$. From 
Theorem \ref{U is F-alperin iff NU is parabolic},  the vertex groups of $\pi$ are $N_0=S$ and the parabolic 
subgroups $N_1, N_2,..., N_k$ containing $S$. This gives a long exact sequence 
\begin{equation}\label{eqn:GLn2}
\cdots \rightarrow \bigoplus \limits_{0\leq i \leq k} H^1(N_i;\mathbb{F}_2)  \xrightarrow {f} 
\bigoplus \limits_{1}^{k} H^1(S;\mathbb{F}_2) \rightarrow H^2(\pi ;\mathbb{F}_2)\rightarrow \cdots
\end{equation}
Note that for any $i$, we have  $|H^1(N_i;\mathbb{F}_2)|\leq |H^1(S;\mathbb{F}_2)|$ because $S$ is a 
Sylow 2-subgroup of $N_i$. Without loss of generality, assume that $N_1, N_2, \dots, N_{n-1}$ are maximal 
parabolic subgroups such that, for $1\leq m \leq n-1$, we have
\[
P_m=\left[ {\begin{array}{cc}
GL(m,2) & M_{m,n-m}(\mathbb{F}_2) \\
0 & GL(m-n,2) \\
\end{array} } \right].
\]
Then we have that $N_1\cong N_{n-1}\cong C_2^{n-1} \rtimes GL(n-1,2) $. Note that $H^1(N_1 ;\mathbb F_2)
\cong \hom(N_1, C_2).$ Take any $\phi \in \hom(N_1, C_2)$. The restriction of $\phi$ to $GL(n-1,2)$ 
is the zero homomorphism because $GL(n-1, 2)$ is a simple group. If $\phi$ is non-zero, then we have 
$\phi(a)=1$ for some $a\in C_2^{n-1}$. Take any nonzero $ b\in C_2^{n-1}$ such that $b \neq a$. 
Since $GL(n-1,2)$ acts on $C_2^{n-1}$ by conjugation and it sends any nonzero element to a
nonzero element, we have $\phi(a)=\phi(b)=\phi(a+b)=1$ which is a contradiction. We conclude 
that
$$H^1(N_1;\mathbb{F}_2)=H^1(N_{n-1};\mathbb{F}_2)=0.$$
From this we obtain that
$$\sum_{0\leq i \leq k} \dim H^1(N_i;\mathbb{F}_2) <  k \dim H^1(S;\mathbb{F}_2) $$
because in the left hand side two terms are zero as shown above, and for all the other summands we have 
$| H^1(N_i;\mathbb{F}_2) | \leq | H^1(S;\mathbb{F}_2)|$. 
This gives that the map $f$ in the long exact sequence \ref{eqn:GLn2}  is not surjective.  
Hence $H^2(\pi ;\mathbb{F}_2)\neq 0.$ This completes the proof.
\end{proof}

%---------------------------------------

\section{Realizing fusion systems via group actions on graphs}\label{sect:GroupActions}

Let $G$ be a finite group acting on a connected graph $X$ without inversion.  As we discussed 
in Section \ref{subsect:GroupActions}, using the isotropy subgroups of the vertices and edges 
of $X$, we can define a graph of groups $\cG$ on the graph $Y=X/G$. Let $\pi :=\pi (\cG, Y)$ denote 
the fundamental group of this graph of groups. The map $\chi : \pi \to G$ defined by sending the vertex 
groups $G_v$ of $\pi$ to the corresponding isotropy subgroups in $G$ is a storing homomorphism. 
The surjectivity of $\chi$ follows from the fact that $X$ is connected. We also know that the kernel 
of $\chi$ is a free group by Corollary \ref{cor:Subgroup}. This gives an extension of groups $$1 \to F \to \pi \to G \to 1.$$

There is an alternative description of the group $\pi =\pi (\cG, Y)$ that is associated to a $G$-action on a graph $X$. 
Consider the Borel construction $EG\times _G X$. From the description of the total space of $(\cG, Y)$ it is easy 
to see that the total space $X(\cG, X)$ is homotopy equivalent to the Borel construction $EG\times _G X$ 
(see \cite[pg. 167]{ScottWall}). Hence $\pi =\pi_1 (EG\times _G X)$. The Borel construction gives a fibration 
$$X \rightarrow EG\times_GX\rightarrow BG$$
that induces a long exact sequence in homotopy groups
$$\cdots \rightarrow\pi_2 (BG)\rightarrow \pi_1(X) \rightarrow \pi_1(EG\times_GX)\rightarrow \pi_1(BG)\rightarrow \pi_0(X)\rightarrow\cdots$$
Since $X$ is connected and $BG$ is a classifying space of a finite group, we obtain a short exact sequence
$$1 \rightarrow \pi_1X \rightarrow \pi_1(EG\times_GX)\rightarrow \pi_1(BG)\rightarrow 1.$$
This shows that the map $\chi:\pi\rightarrow G$ is surjective and its kernel is isomorphic
to $\pi _1(X)$, which is a free group.

Note that infinite groups obtained in this way may not have a Sylow $p$-subgroup in general.

\begin{example} Consider the $G=C_2=\langle a \rangle$ action on a circle $X$ with 
the action $g (x,y)= (x, -y)$. We can view $X$ as the realization of the graph with two vertices and two edges. Then the quotient graph $Y$ is a graph with single edge and two vertices, where vertex groups are $G_v=C_2$ at both vertices, and the edge group is $1$. The fundamental group $\pi:=\pi (EG \times _G X)$ is the free product $C_2\ast C_2$ which is isomorphic to the infinite Dihedral group $D_{\infty}$. In this case $\pi$ does not have a Sylow $2$-subgroup.    \qed
\end{example}

We can give a list of conditions on the $G$-action on $X$ to guarantee the existence of a Sylow $p$-subgroup  in $\pi$.
 
\begin{proposition}\label{prop:SylowExists}
Let $G$ be a finite group with a Sylow $p$-subgroup $S$, and let $X$ be a connected $G$-graph 
where $G$ acts without inversion. If there is a vertex $v_0$ of $X$ such that
\begin{enumerate}
\item $S$ fixes $v_0$, and
\item for any vertex $v\in X$ there exists a path $y_1, \dots, y_n$ from $v_0$ to $gv$ for some $g\in G$ such that 
for each $i$, a Sylow $p$-subgroup of the stabilizer group $\Stab_G ( t(y_i))$ lies in the stabilizer of the edge $y_i$,
\end{enumerate}
then the fundamental group $\pi=\pi (\cG, Y)$ associated to the $G$-action on $X$ has a Sylow $p$-subgroup 
that maps isomorphically to $S$ under the storing homomorphism $\chi: \pi \to G$. 
\end{proposition}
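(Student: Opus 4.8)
The plan is to produce an explicit Sylow $p$-subgroup of $\pi = \pi(\cG, Y)$ by showing that the copy of $S$ sitting inside the vertex group $G_{v_0}$ (whose stabilizer contains $S$ by hypothesis (1)) is already Sylow in $\pi$, i.e.\ that every finite $p$-subgroup of $\pi$ is $\pi$-conjugate into $i_{v_0}(S)$. First I would recall the structural fact from Bass--Serre theory that $\pi$ acts without inversion on the universal cover tree $T$ of $(\cG, Y)$, with vertex stabilizers the $\pi$-conjugates of the $i_v(G_v)$. A standard lemma for groups acting on trees (e.g.\ \cite[Section 6.3]{Serre}, or Corollary \ref{cor:Subgroup} used contrapositively together with the finiteness of a $p$-group) says that any \emph{finite} subgroup of $\pi$ fixes a vertex of $T$, hence lies in some conjugate of some $i_v(G_v)$. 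So a finite $p$-subgroup $Q \leq \pi$ is conjugate into $i_v(G_v)$ for some vertex $v$; applying $\chi$, the group $\chi(Q)$ is a $p$-subgroup of $\Stab_G(v)$, hence (by Sylow in the finite group $G$) conjugate in $G$ into $S$. Since $\chi|_{i_v(G_v)}$ is injective, it therefore suffices to push the conjugation that moves $\chi(Q)$ into $S$ up to a conjugation in $\pi$ moving $Q$ into $i_{v_0}(S)$ — and this is exactly what condition (2) is designed to supply.

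The heart of the argument is then a path-pushing computation along the universal cover tree $T$. Here is how I would carry it out: given $v$ and an element $h \in G$ with $h \cdot (\text{some }p\text{-subgroup at }v) \subseteq S$, use condition (2) applied to the vertex $v$ to get a path $y_1, \dots, y_n$ in $X$ from $v_0$ to $gv$ (some $g\in G$) along which, for each $i$, a Sylow $p$-subgroup of $\Stab_G(t(y_i))$ lies in $\Stab_G(y_i)$. Lift this path to a path $\tilde y_1, \dots, \tilde y_n$ in $T$ starting at the chosen lift $\tilde v_0$ of $v_0$. The condition that a Sylow $p$-subgroup of the terminal vertex stabilizer is contained in the edge stabilizer means exactly that, when we express $\pi$ via the graph-of-groups presentation, the generator (HNN-stable letter or maximal-tree edge) associated to $y_i$ conjugates the relevant Sylow $p$-subgroup of one vertex group into the adjacent vertex group \emph{without leaving the amalgamated/edge subgroup}, so each such ``hop'' is realized by an honest element of $\pi$ that carries a Sylow $p$-subgroup at one end to a subgroup of the Sylow $p$-subgroup at the other end. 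Composing these $n$ hops produces an element $\gamma \in \pi$ with $\chi(\gamma) = g^{-1}$ (up to the ambiguity absorbed by the vertex stabilizers) such that $\gamma$ conjugates the chosen $p$-subgroup of $i_v(G_v)$ into $i_{v_0}(S)$. Applying this to $Q$ (first conjugated inside $i_v(G_v)$ by the $G$-Sylow element, lifted injectively to $i_v(G_v)$) shows $Q$ is $\pi$-conjugate into $i_{v_0}(S)$.

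Finally I would assemble these pieces: every finite $p$-subgroup of $\pi$ is $\pi$-conjugate into $i_{v_0}(S)$, so $i_{v_0}(S)$ is a Sylow $p$-subgroup of $\pi$ in the sense of Section \ref{subsect:Fusion}; and by construction $\chi$ restricted to $i_{v_0}(S)$ is the injection sending it isomorphically onto $S \leq G$, since $\chi \circ i_{v_0}$ is injective (storing) and $S$ is its image. The main obstacle I anticipate is not any single estimate but the bookkeeping in the path-pushing step: one must be careful that the ``Sylow $p$-subgroup of $\Stab_G(t(y_i))$ in $\Stab_G(y_i)$'' hypothesis is used at the \emph{right} vertex in each hop (the inductive invariant is ``$Q$ currently lies in the Sylow $p$-subgroup of the stabilizer of the current vertex, which is contained in the next edge group''), and that the lift of the $X$-path to the tree $T$ is compatible with the identifications in the graph-of-groups presentation so that the composite of hops really is a single well-defined element of $\pi$ rather than a formal product in the free product. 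Once the invariant is set up correctly the computation is routine, so I would state it as a short lemma about ``moving a $p$-subgroup along a $p$-admissible path'' and then apply it.
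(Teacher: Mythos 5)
Your proof is correct, but it takes a genuinely different route from the paper's. The paper disposes of the proposition by checking that conditions (1) and (2) are precisely hypotheses (i) and (ii) of \cite[Prop 3.3]{LibmanSeeliger} and quoting that result; you instead reprove the relevant statement from scratch inside Bass--Serre theory. Your mechanism is the right one: a finite $p$-subgroup of $\pi$ fixes a vertex of the universal tree $T$ (hence is conjugate into some $i_v(G_v)$), and condition (2) lets you walk it back to $i_{v_0}(S)$ edge by edge, using Sylow conjugacy inside each finite vertex stabilizer to realign with the Sylow subgroup sitting inside the next edge stabilizer. What your version buys is self-containedness and an explicit conjugating element; what it costs is the bookkeeping you flag, which is genuinely needed but standard: the path in $X$ lifts to $T$ because $\psi\colon T\to X$ is identified with the covering $T\to T/F$ for $F=\ker\chi$ acting freely, and at the tree level crossing an edge requires no conjugation at all, since $\Stab_\pi(\tilde y_i)=\Stab_\pi(o(\tilde y_i))\cap\Stab_\pi(t(\tilde y_i))$ --- the only conjugations that actually occur are the Sylow realignments inside the finite vertex stabilizers, together with the identifications of tree stabilizers with the chosen vertex groups, which is where your stable letters enter. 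One small correction: your parenthetical claim that the fixed-vertex fact follows from Corollary \ref{cor:Subgroup} read contrapositively is too weak, since that argument only shows a nontrivial finite subgroup meets some conjugate of a vertex group nontrivially, not that it is contained in one; you should rely on the standard fixed-point theorem for finite groups acting on trees, which you also cite and which suffices.
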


\begin{proof}
The vertex groups of $\cG$ are the stabilizers of the $G$-action on $X$. Conditions (1) and (2) in the theorem 
implies the condition (ii) of \cite[Prop 3.3]{LibmanSeeliger}. Condition (i) of this proposition already holds since 
all vertex groups are finite, hence
by \cite[Prop 3.3]{LibmanSeeliger}, $\pi$ has a Sylow $p$-subgroup isomorphic to $S$. 

Since $S$ fixes the point $v_0$, $S$ maps into $\pi$ via the inclusion $i_v: G_{v}  \to \pi$ where $v$ is the image 
of $v_0$ under the quotient map $X\to Y$. It is clear that the storing homomorphism $\chi : \pi \to G$ takes a 
Sylow $p$-subgroup of $\pi$ isomorphically onto $S \leq G$.
\end{proof}

The storing homomorphism $\chi: \pi \to G$ can also be used to compare the corresponding fusion systems.

\begin{theorem}\label{thm:mainthm3}
Let $G$ be a finite group with a Sylow $p$-subgroup $S$, and let $X$ be a connected graph 
on which $G$ acts without inversion. Assume that $X$ has a vertex $v_0$ satisfying conditions
(1) and (2) of Proposition \ref{prop:SylowExists}. Then after identifying the Sylow $p$-subgroups, we have
$\cF_S (\pi) \subseteq  \cF_S (G)$.
Furthermore, if for every $p$-centric, $p$-radical subgroup $P$ in $G$, the normalizer $N_G(P)$ fixes a vertex on $X$, then  
 $\cF_S(\pi) =\cF_S(G)$.
\end{theorem}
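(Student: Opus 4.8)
The plan is to prove the two inclusions separately, using the storing homomorphism $\chi : \pi \to G$ as the bridge between the two fusion systems. First I would establish $\cF_S(\pi) \subseteq \cF_S(G)$. After identifying the Sylow $p$-subgroup of $\pi$ with $S \leq G$ via $\chi$ (using Proposition \ref{prop:SylowExists}), any morphism in $\cF_S(\pi)$ is of the form $c_\gamma : P \to Q$ for some $\gamma \in \pi$ with $\gamma P \gamma^{-1} \leq Q$, where $P, Q \leq S$. Applying $\chi$, and using that $\chi$ restricted to $S$ is injective so that $\chi$ commutes with these conjugation maps, we get $c_{\chi(\gamma)} : P \to Q$ in $G$ with $\chi(\gamma) P \chi(\gamma)^{-1} \leq Q$; hence $c_\gamma = c_{\chi(\gamma)} \in \hom_{\cF_S(G)}(P,Q)$. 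This direction is essentially formal once the identification of Sylow subgroups is in place.

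For the reverse inclusion, the idea is to use Alperin's fusion theorem. By Alperin's theorem for fusion systems, $\cF_S(G)$ is generated by $\Aut_{\cF_S(G)}(P) = N_G(P)/C_G(P)$ for $P$ ranging over fully normalized, $\cF$-radical, $\cF$-centric subgroups of $S$; by Lemma \ref{lem:CentricRadical} these are among the $p$-centric, $p$-radical subgroups of $G$. So it suffices to show that every automorphism $c_g : P \to P$ with $g \in N_G(P)$ lies in $\cF_S(\pi)$. Here is where the extra hypothesis enters: if $N_G(P)$ fixes a vertex $v$ of $X$, then $N_G(P) \leq \Stab_G(v) = G_v$, and the vertex group $G_v$ embeds into $\pi$ via $i_v$ with $\chi \circ i_v$ the identity inclusion $G_v \hookrightarrow G$. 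Thus there is $\gamma = i_v(g) \in \pi$ with $\chi(\gamma) = g$, and $\gamma$ conjugates the copy of $P$ inside $\pi$ appropriately, so $c_g = c_\gamma \in \hom_{\cF_S(\pi)}(P,P)$. Since these automorphisms generate $\cF_S(G)$ and $\cF_S(\pi) \subseteq \cF_S(G)$ is already a (saturated, hence closed) fusion subsystem containing all of them, we conclude $\cF_S(\pi) = \cF_S(G)$.

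One technical point I would need to be careful about: to make sense of ``$P$ inside $\pi$'' and to compare conjugation actions, I should fix the identification so that $S \leq \pi$ maps to $S \leq G$ via $\chi$, and check that a given $p$-centric $p$-radical $P \leq S$ can be taken, after $\cF_S(G)$-conjugacy, to be fully normalized — but since $\cF_S(\pi) \subseteq \cF_S(G)$ is already established, the fusion data of an $\cF_S(G)$-conjugate of $P$ determines that of $P$, so it is harmless to assume $P$ is fully normalized from the start. The hypothesis ``$N_G(P)$ fixes a vertex'' is exactly what guarantees $N_G(P)$ lifts to $\pi$ through a single vertex group, which is what we need to realize $\Aut_{\cF}(P)$ inside $\cF_S(\pi)$.

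The main obstacle is the reverse inclusion, and specifically the bookkeeping around identifying the several copies of $P$: the copy $P \leq S \leq \pi$ (via the chosen Sylow embedding) versus the copy $P \leq G_v$ sitting inside $\pi$ via $i_v$. These need not be literally equal as subgroups of $\pi$, only $\pi$-conjugate, and one must verify that the conjugating element can be chosen to realize the conjugation already up in $\pi$ (so that after transporting, $c_g$ is genuinely a morphism in $\cF_S(\pi)$ between subgroups of the chosen $S$). This is where conditions (1) and (2) of Proposition \ref{prop:SylowExists}, together with the connectedness of $X$, do the work: they provide a path in $\pi$ from the chosen Sylow to the vertex group $G_v$ along which $P$ and its $N_G(P)$-conjugates travel without obstruction. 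Once this is set up, the argument closes cleanly.
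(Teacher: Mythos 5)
Your proposal is correct and follows essentially the same route as the paper: the forward inclusion by pushing morphisms through the storing homomorphism $\chi$, and the reverse inclusion by combining Alperin's fusion theorem with Lemma \ref{lem:CentricRadical} and the hypothesis that $N_G(P)$ fixes a vertex, so that $N_G(P)$ lifts into $\pi$ through a single vertex group. The identification issue you flag (the copy of $P$ in $S\leq\pi$ versus the copy in $i_v(G_v)$) is treated in the paper at the same level of detail, namely by the phrase ``after identifications,'' so your treatment is no less rigorous than the original.
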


\begin{proof}
Let $c_{\gamma} : Q \to R$ be a morphism in $\cF_S (\pi)$, where $\gamma \in \pi$.  Let $g=\chi (\gamma)$. 
After identifications, $c_{\gamma}$ is equal to the
conjugation map $c_g: Q\to R$ which is a morphism in $\cF_S(G)$. Hence $\cF_S(\pi)\subseteq \cF_S(G)$. 

To prove the second statement, let $c_g : P \to P$ be a morphism in $\cF_S(G)$ where $P\leq S$ is a $p$-centric, $p$-radical subgroup in $G$,
and  $g\in N_G(P)$. By the given condition, $N_G(P) \leq \Stab _G(v)$ for some $v$ in $X$. Since $\chi$ is a storing homomorphism, it induces 
an isomorphism between $i_v(G_v) \leq \pi$ and $\Stab_G(v) \leq G$. This means that there is a $\gamma \in i_v(G_v) \leq \pi$ 
such that $c_{\gamma} : P \to P$ is equal to the morphism $c_g: P\to P$, hence $c_g\in \cF_S(\pi)$. Since, by Alperin's theorem
$\cF:=\cF_S(G)$ is generated by morphisms $c_g: P \to P$ where $P$ is fully normalized, $\cF$-centric, and $\cF$-radical.
By Lemma \ref{lem:CentricRadical}, $\cF$-centric, $\cF$-radical subgroups in $\cF_S(G)$ are $p$-centric and $p$-radical in $G$, hence 
we can conclude that $\cF_S(\pi)=\cF_S(G)$.
\end{proof}
 
\begin{remark}\label{rem:Notconnected}  If $X$ is a $G$-graph which is not connected 
but such that the quotient graph  $Y=X/G$ is connected, then there is a subgroup $H \leq G$ formed by elements $g\in G$ such that 
$gX_1=X_1$ for some connected component $X_1$ of $X$. It is easy to see that there is  
an isomorphism $X\cong G\times_HX_1$ of $G$-spaces. This gives $$EG\times_GX \simeq EG\times _G (G \times _H X_1)\simeq 
EH \times_H X_1.$$ If the connected $H$-graph $X_1$ satisfies the conditions in Theorem \ref{thm:mainthm3}, 
then $$\pi := \pi_1(EG\times_GX)=\pi_1(EH\times_HX_1)$$ realizes the fusion $\mathcal{F}_S(H)$.  
In general the fusion system $\cF_S(H)$ will not be equal to $\cF_S(G)$, but in some of the cases we consider below this will be true, 
and we will have applications to Theorem \ref{thm:mainthm3} even when the $G$-graph $X$ is not connected. \qed
\end{remark}

The main example that motivated this section is the action of $G$ on the graph of $p$-centric, $p$-radical subgroups of $G$.

\begin{example}
Let $G$ be a finite group and $X$ be the graph whose vertices are the $p$-centric, $p$-radical subgroups of $G$. There is an edge between $P$ and $Q$ in $X$ if $Q < P$. The group $G$ acts on $X$ by conjugation, and this action is without inversion of edges.  The graph $X$ is not connected in general but the subgroup $H$ that stabilizes a connected component is generated by the stabilizers of vertices of that component. Let $X_1$ be the connected component that includes $S$, then $H$ is generated by normalizers of the $p$-centric, $p$-radical subgroups in $S$. The stabilizers are normalizer subgroups $N_G(P)$, and by Alperin's fusion theorem they generate the fusion system $\cF_S(G)$, so in this case we have $\cF_S(H)=\cF_S(G)$. 

Let $\pi:=\pi_1 (EG\times _G X) \simeq \pi_1(EH \times _H X_1)$. Consider the $H$-action on the connected graph $X_1$. If we take $v_0$ as  $S$, then the stabilizer of $v_0$ in $H$ contains a Sylow $p$-subgroup of $G$. Condition (ii) of Proposition \ref{prop:SylowExists} can 
be checked easily. Let $P$ be any vertex in $X_1$. There is a vertex $R$ that is conjugate to $P$ such that
$R$ is fully normalized and $R \leq S$. Then the path between $R$ and $S$ formed by a single edge satisfies the required condition because
the stabilizer of the edge $R<S$ includes the Sylow $p$-subgroup of $N_G(R)$, which is $N_S(R)$.  So $\pi$ has a Sylow $p$-subgroup that maps isomorphically to the Sylow $p$-subgroup of $G$. The condition of Theorem \ref{thm:mainthm3} also holds because for every $p$-centric, $p$-radical subgroup of $G$, the subgroup $N_G(P)$ is the stabilizer of the vertex $P$. Hence  $\pi$ realizes $\cF_S(G)$.  

Note that the infinite group $\pi$ obtained from this group action is the same as the model given by Libman and Seeliger in \cite[Sec. 4.1]{LibmanSeeliger}, which is different than the Robinson model given in Theorem \ref{thm:RobinsonModel}. This version of Robinson model is interesting from the point of view of the normalizer decomposition of classifying spaces. Let $\cC$ denote the collection of all $p$-centric, $p$-radical subgroups in $G$. The graph $X$ is the 1-skeleton of the poset of subgroups in $\cC$. Since the collection of $p$-centric, $p$-radical subgroups of $G$ is a $p$-ample collection, the projection map $EG\times _G |\cC| \to BG$ induces a mod-$p$ isomorphism (see Definition 7.7 and 8.10 in \cite{Dwyer}). In addition, we have $B\pi\cong EG\times _G X$, hence the map $f: B\pi \to BG$ induced by $\chi: \pi \to G$ gives a mod-$p$ cohomology isomorphism if and only if the inclusion map $i: X\to |C|$ induces a mod-$p$ cohomology isomorphism  $$i^* : H^*(EG\times_G |C| ; \bF_p) \to H^* (EG\times_G X ; \bF_p).$$ It is easy to see that for many groups these two cohomology rings will not be isomorphic, in particular, when the permutation modules for the higher dimensional cells in $|\cC|$ are not free.  
\qed
\end{example}

In the rest of the section we consider the group action on the graph of elementary abelian $p$-subgroups of $G$. 
Through out this discussion, we assume $G$ is a finite group with $p$-rank equal to $2$, meaning that $G$ has 
a subgroup isomorphic to $(\bZ /p)^2$ but it has no subgroups isomorphic to $(\bZ/p)^3$. Let $X =\cA_p (G)$ 
be the poset of all nontrivial elementary abelian $p$-subgroups in $G$. Since $G$ has $p$-rank equal to $2$ 
this is a one-dimensional poset, hence we may consider it as a graph whose vertices are the nontrivial elementary 
abelian $p$-subgroups of $G$ and where there is an edge between $E_1$ and $E_2$ if and only if $E_1 < E_2$. The group $G$ 
acts on $X$ by conjugation. 

Let $\pi :=\pi (\cG, Y)$ denote the fundamental group of the graph of 
groups associated to the $G$-action on $X$.  Note that the vertex groups of $\cG$ are the normalizers $N_G(E)$. 
By the discussion above, the group $\pi$ can also be described as the fundamental group $\pi:=\pi _1 (EG\times _G |\cA_p (G) |)$.  
In this case the mod $p$ cohomology  of $\pi$ is known to be isomorphic to the cohomology of $G$. 
This is a theorem due to P. Webb.

\begin{theorem}[Webb, \cite{Webb}, Thm E] Assume that $G$ is a finite group with $\text{rk}_p(G)=2$. Let $\cA _p (G)$ be the poset of nontrivial elementary abelian $p$-subgroups in $G$, and let $\pi:=\pi_1(EG \times_G |\cA_P (G)|)$. Then $\widetilde H_i (|\cA_p (G)|  ; \bF_p )$ is a projective $\bF_p G$-module for $i=0,1$, and  there is an isomorphism $$H^*(\pi ,\mathbb{F}_p) \cong H^*(G,\mathbb{F}_p).$$
\end{theorem}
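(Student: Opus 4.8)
The statement has two halves. The first is the projectivity of $\widetilde H_i(|\cA_p(G)|;\bF_p)$ for $i=0,1$, which is the geometric input; the second is the cohomology isomorphism, which I plan to extract from the first by the methods of Section~\ref{sect:Storing}. For the projectivity, write $X:=|\cA_p(G)|$; since $\text{rk}_p(G)=2$, $X$ is $1$-dimensional, so only $\widetilde H_0$ and $\widetilde H_1$ occur. The key point is that every fixed-point complex $X^P$, for $P$ a nontrivial $p$-subgroup of $G$, is contractible: the order-preserving map $E\mapsto E^P=E\cap C_G(P)$ from $\cA_p(G)^P$ (the subgroups $E$ normalized by $P$) to $\cA_p(C_G(P))$ is well defined (a nontrivial $p$-group acting on a nontrivial $p$-group has nontrivial fixed points), satisfies $E^P\le E$, and is the identity on $\cA_p(C_G(P))$, so by Quillen's fiber lemma $|X^P|\simeq|\cA_p(C_G(P))|$; and $\cA_p(C_G(P))$ is conically contractible because a central element of order $p$ of $P$ lies in $Z(C_G(P))$. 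When $X$ is connected I then invoke the Smith-theoretic fact that a finite $G$-complex all of whose fixed sets $X^P$ ($1\ne P$ a $p$-subgroup) are $\bF_p$-acyclic has reduced $\bF_p$-chains $\bF_pG$-chain homotopy equivalent to a bounded complex of projectives, so $\widetilde H_*(X;\bF_p)$ is projective. When $X$ is disconnected, $G$ has a strongly $p$-embedded subgroup $M$, the $G$-set $\pi_0(X)$ is $G/M$, and $\widetilde H_0(X;\bF_p)$ is the augmentation ideal of $\bF_p[G/M]$; for $g\notin M$ we have $S\cap gMg^{-1}\le M\cap gMg^{-1}$, which has order prime to $p$ and so is trivial, hence $\bF_p[G/M]$ restricts to a Sylow $p$-subgroup as $\bF_p\oplus(\text{free})$ and $\widetilde H_0(X;\bF_p)$ is projective; for $\widetilde H_1$ one passes to a component and recurses on $|G|$.

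For the cohomology isomorphism, assume first $X$ is connected. As explained before the theorem, $\pi=\pi_1(EG\times_G X)$ sits in an extension $1\to F\to\pi\xrightarrow{\chi}G\to1$ with $F=\pi_1(X)$ free, $\chi$ a storing homomorphism taking a Sylow $p$-subgroup of $\pi$ isomorphically onto $S$ (Proposition~\ref{prop:SylowExists}), and $F_{ab}\cong H_1(X)$ as $\bZ G$-modules (Lemma~\ref{lem:ZGIsom}). By the universal coefficient theorem $\hom(F_{ab},\bF_p)\cong\widetilde H^1(X;\bF_p)$, the $\bF_p$-dual of the projective module $\widetilde H_1(X;\bF_p)$ and hence itself a projective $\bF_pG$-module. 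I would then run the Lyndon--Hochschild--Serre spectral sequence of the extension, as in Remark~\ref{rem:splitting}: since $F$ is free only the rows $t=0,1$ survive, the row $t=0$ is $H^*(G;\bF_p)$, and on the row $t=1$ the coefficients $\hom(F_{ab},\bF_p)$ are projective so $H^s(G;\hom(F_{ab},\bF_p))=0$ for $s\ge1$; thus the sequence collapses except for the single term $E_2^{0,1}=\hom_{\bF_pG}(F_{ab},\bF_p)$. (Equivalently: Theorem~\ref{thm:LibmanSeeliger} gives $H^*(\pi;\bF_p)\cong H^*(\cF_S(G);\bF_p)\oplus\ker(\res^\pi_S)$ with $H^*(\cF_S(G);\bF_p)\cong H^*(G;\bF_p)$, and $\ker(\res^\pi_S)\cong H^{*-1}(G;\hom(F_{ab},\bF_p))$, which vanishes above degree $1$ by projectivity.) The disconnected case reduces to the connected one by Remark~\ref{rem:Notconnected}: replace $G$ by the stabilizer $H$ of a component of $X$; as $H$ contains every $N_G(Q)$ with $Q\le S$ $\cF$-radical and $\cF$-centric (such a normalizer fixes the vertex $\Omega_1(Z(Q))$ of that component), Alperin's theorem gives $\cF_S(H)=\cF_S(G)$, so it suffices to treat $H$ acting on the connected component.

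The step I expect to be the genuine obstacle is the vanishing of the surviving term, i.e.\ $\hom_{\bF_pG}(F_{ab},\bF_p)=\widetilde H^1(X;\bF_p)^G=0$ --- equivalently, that the projective modules $\widetilde H_i(|\cA_p(G)|;\bF_p)$ contain no summand isomorphic to the projective cover of the trivial module. This is invisible to restriction to a Sylow subgroup and does not follow from projectivity alone; establishing it requires a finer analysis of the reduced chain complex of $|\cA_p(G)|$ and of its quotient by $G$ (controlling the reduced Lefschetz module well enough to exclude a trivial constituent). This is the part where Webb's original argument does the work, and I would either reproduce or cite it; the remainder of the proof is then bookkeeping.
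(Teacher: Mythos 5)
The paper does not actually prove this statement: its ``proof'' consists of the two citations to Webb's paper (Theorem E for the projectivity, p.~153 for the cohomology isomorphism). Your sketch is therefore doing more work than the text you are being compared against, and it reconstructs Webb's route faithfully: contractibility of the fixed-point sets $|\cA_p(G)|^P$ via Quillen's arguments, projectivity of the reduced homology, and then the comparison of the isotropy long exact sequence for $\pi$ acting on the tree with the corresponding sequence for $G$ acting on $X$ (equivalently, Theorem~\ref{thm:Storing} plus the LHS spectral sequence). Most importantly, you have correctly located the genuine content: projectivity of $\widetilde H_1(X;\bF_p)$ kills $H^{s}(G;\hom(F_{ab},\bF_p))$ only for $s\ge 1$, and the isomorphism $H^*(\pi;\bF_p)\cong H^*(G;\bF_p)$ still requires $\hom(F_{ab},\bF_p)^G=\widetilde H^1(X;\bF_p)^G=0$, i.e.\ that $\widetilde H_1(X;\bF_p)$ has no summand isomorphic to the projective cover of the trivial module. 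That vanishing is equivalent (via $\widetilde H_j(X;\bF_p)_G\cong\widetilde H_j(X/G;\bF_p)$, which holds once the reduced chain complex is stably a complex of projectives) to the mod-$p$ acyclicity of the orbit space $|\cA_p(G)|/G$, which is exactly the nontrivial theorem Webb proves in the rank-$2$ case. Deferring that step to Webb is legitimate here, since the theorem is attributed to him.

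Two smaller points to tighten. First, the inference ``$\widetilde C_*(X;\bF_p)$ is chain homotopy equivalent to a bounded complex of projectives, so $\widetilde H_*(X;\bF_p)$ is projective'' is false in general: the complex $\bF_pC_p\xrightarrow{\;N\;}\bF_pC_p$ has non-projective homology in both degrees. For a $1$-dimensional complex one needs Webb's finer argument (or the strongly $p$-embedded analysis you give for $\widetilde H_0$, which is fine, together with a separate argument for $\widetilde H_1$); as written this step is a gap, though the conclusion is precisely Webb's Theorem~E. Second, in the disconnected case you reduce to the component stabilizer $H$ and conclude $H^*(\pi;\bF_p)\cong H^*(H;\bF_p)$; to recover the stated isomorphism with $H^*(G;\bF_p)$ you should add that $\cF_S(H)=\cF_S(G)$ forces $H^*(H;\bF_p)\cong H^*(G;\bF_p)$ by the Cartan--Eilenberg stable-elements theorem.
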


\begin{proof} The first part is proved in \cite[Thm E]{Webb}. The isomorphism of cohomology groups is given in \cite[pg. 153]{Webb}.
\end{proof}

Using Theorem \ref{thm:mainthm3} we can also conclude the following.

\begin{theorem}\label{thm:Rank2}
Let  $G$ be a finite group with  $\text{rk}_p(G)=2$, and let $S$ be a Sylow $p$-subgroup of $G$. Then $\pi :=\pi_1(EG \times_G | \cA_p (G) |)$ has a Sylow $p$-subgroup isomorphic to  $S$, and $\mathcal{F}_S(\pi)=\mathcal{F}_S(G) $. 
\end{theorem}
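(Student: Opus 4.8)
The plan is to realize $\pi$ through the conjugation action of $G$ on the graph $X=\cA_p(G)$ (which is a graph because $\text{rk}_p(G)=2$), and then to invoke Proposition \ref{prop:SylowExists} and Theorem \ref{thm:mainthm3}, with Remark \ref{rem:Notconnected} to handle the fact that $\cA_p(G)$ need not be connected. Put $v_0=\Omega_1(Z(S))$, a nontrivial elementary abelian $p$-subgroup normalized by $S$, so $v_0$ is a vertex of $X$ fixed by $S$. Let $X_1$ be the connected component of $X$ containing $v_0$ and let $H=\{g\in G\mid gX_1=X_1\}$. Since $S$ fixes $v_0\in X_1$ we have $S\le H$, so $S$ is a Sylow $p$-subgroup of $H$, and by Remark \ref{rem:Notconnected} one has $\pi\cong\pi_1(EH\times_H X_1)$. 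A remark used repeatedly: for any vertex $E$ of $X_1$ the group $N_G(E)$ fixes $E\in X_1$, hence stabilizes $X_1$, so $N_G(E)=N_H(E)$ for all $E\in X_1$.

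\textbf{Step 1: $\cF_S(H)=\cF_S(G)$.} The inclusion $\subseteq$ is automatic. By Alperin's fusion theorem, $\cF_S(G)$ is generated by the automizers $\Aut_G(Q)$ where $Q$ runs over $S$ and over the fully normalized, $\cF$-radical, $\cF$-centric subgroups of $S$. For such a $Q$, being $\cF$-centric forces $C_S(Q)=Z(Q)$ (as $Z(Q)$ is the unique Sylow $p$-subgroup of $C_G(Q)$), so $v_0=\Omega_1(Z(S))\le Z(S)\le C_S(Q)=Z(Q)$ and thus $v_0\le\Omega_1(Z(Q))$. Hence the vertex $\Omega_1(Z(Q))$ equals $v_0$ or is joined to it by an edge of $X$, so it lies in $X_1$; since $N_G(Q)$ normalizes $\Omega_1(Z(Q))$ it stabilizes $X_1$, i.e.\ $N_G(Q)\le H$. (For $Q=S$ this reads $N_G(S)\le H$, which holds because $N_G(S)$ normalizes $v_0$.) Therefore every generator of $\cF_S(G)$ is a morphism of $\cF_S(H)$, giving $\cF_S(G)\subseteq\cF_S(H)$.

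\textbf{Step 2: conditions (1) and (2) of Proposition \ref{prop:SylowExists} for the $H$-action on $X_1$.} Condition (1) holds by the choice of $v_0$. Condition (2) is the crux. Since $S\in\mathrm{Syl}_p(H)$, every $H$-orbit of vertices of $X_1$ has a representative $E\le S$ lying in $X_1$ with $N_S(E)\in\mathrm{Syl}_p(N_H(E))=\mathrm{Syl}_p(N_G(E))$ (conjugate into $S$ and maximize $|N_S(-)|$), so it suffices to join $v_0$ to each such fully normalized $E$ by a path along which, at every edge, a Sylow $p$-subgroup of the stabilizer of the terminal vertex normalizes the preceding vertex. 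Such a path is built by descending through centers along the normalizer chain $E\lhd N_S(E)\lhd N_S(N_S(E))\lhd\cdots\lhd S$: from a rank-$2$ vertex one drops to a line fixed pointwise by the relevant $p$-group, from a rank-$1$ vertex one rises into the $\Omega_1$ of the center of the next term of the chain (note $Z(S)\le Z(Q)$ for every $Q$ in this chain, so $v_0$ remains available), and after finitely many steps one lands in the fixed-point subgraph $\cA_p(G)^S$, which is connected (by Quillen's theorem that $\cA_p(G)^Q$ is contractible for every nontrivial $p$-subgroup $Q$) and contains $v_0$, and along which every relevant Sylow $p$-subgroup equals $S$ and hence normalizes every vertex normal in $S$. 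The step I expect to be the main obstacle is controlling the Sylow $p$-subgroups at the intermediate vertices of this path, which need not be fully normalized: making the descent rigorous uses the Sylow axiom of the saturated fusion system $\cF_S(G)=\cF_S(H)$ — equivalently, Quillen's connectivity of $\cA_p(G)^Q$ for the subgroups $Q$ occurring in the chain — to realign Sylow subgroups step by step. Granting this, Proposition \ref{prop:SylowExists} yields that $\pi$ has a Sylow $p$-subgroup mapping isomorphically onto $S$.

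\textbf{Step 3: the extra hypothesis of Theorem \ref{thm:mainthm3}, and conclusion.} If $P$ is a $p$-centric, $p$-radical subgroup of $H$, we may assume $P\le S$; then $C_S(P)=Z(P)$ (as $Z(P)$ is the unique Sylow $p$-subgroup of the $p$-centric group $C_H(P)$), so $v_0\le\Omega_1(Z(P))$, which is therefore a vertex of $X_1$ fixed by $N_H(P)$. Hence Theorem \ref{thm:mainthm3}, applied to the $H$-action on the connected graph $X_1$, gives $\cF_S(\pi)=\cF_S(H)$; combined with Step 1 this yields $\cF_S(\pi)=\cF_S(G)$, completing the proof.
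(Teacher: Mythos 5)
Your overall architecture --- pass to the connected component $X_1$ of $v_0=\Omega_1(Z(S))$ and its stabilizer $H$, prove $\cF_S(H)=\cF_S(G)$ via Alperin's theorem by showing that the normalizers of the relevant subgroups fix a vertex of $X_1$, and then verify the hypotheses of Proposition \ref{prop:SylowExists} and Theorem \ref{thm:mainthm3} for the $H$-action on $X_1$ --- is exactly the paper's, and your Steps 1 and 3 are correct. The problem is Step 2, which is the heart of the theorem and which you do not actually prove: you sketch a descent along the normalizer chain $E\lhd N_S(E)\lhd\cdots\lhd S$, appeal to contractibility of the fixed-point subcomplexes $|\cA_p(G)|^Q$, and then write ``Granting this, \dots''. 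As written this is a genuine gap, not a routine verification: you yourself identify the unresolved difficulty (controlling Sylow $p$-subgroups of the stabilizers of the intermediate vertices, which need not be fully normalized), and nothing in your sketch resolves it. In addition, the parenthetical claim that $Z(S)\le Z(Q)$ for every $Q$ in the chain fails already for $Q=E$ when $E$ is a non-central rank-one subgroup, since there $Z(E)=E$.

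The missing idea is that in a rank-two group no chain is needed: paths of length at most two suffice. Since $v_0\le Z(S)$, every elementary abelian $E\le S$ centralizes $v_0$, so $Ev_0$ is elementary abelian and hence of rank at most $2$. Consequently every rank-two $E\le S$ contains $v_0$ (so is equal or adjacent to $v_0$), and every rank-one $E\le S$ either lies in $v_0$ or is joined to $v_0$ through the single intermediate vertex $Ev_0$. For $E$ fully normalized the final edge carries $N_S(E)$, which is a Sylow $p$-subgroup of $\Stab_H(E)$ and normalizes $Ev_0$ because $v_0$ is central in $S$; the only remaining issue --- that the intermediate vertex $Ev_0$ need not be fully normalized --- is handled by replacing the whole path with an $H$-conjugate in which it is (condition (2) of Proposition \ref{prop:SylowExists} only asks for a path to some conjugate $gv$). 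This is precisely how the paper argues, and it closes the gap you left open; the normalizer-chain construction and the appeal to Quillen's fixed-point contractibility are unnecessary. I recommend replacing Step 2 by this two-edge argument.
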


\begin{proof}	
 Since the center $Z(S)$ of $S$ is not trivial, we can take a subgroup $C_1$ of order $p$ which lies in $Z(S)$.
Let $X_1, X_2,\dots ,X_k$ be the connected components of $X:=|\cA_p(G)|$, assume that $C_1\in X_1$. 
Define $$H:=\{g\in G \, |\, gX_1=X_1\}.$$

Since $S$ fixes $C_1$, it also fixes the component $X_1$. Hence, we have $S \leq H$. Any elementary abelian 
$E \leq S$ is connected to $C_1$ by a path in $X_1$ because the elementary abelian subgroup $EC_1$ of $S$ is 
connected to both $E$ and $C_1$. For any subgroup $Q \leq S$, the normalizer $N_G(Q)$ normalizes the largest 
central elementary abelian subgroup $Z_Q$ of $Q$ because $Z_Q $ is a characteristic subgroup of $Q$. This means 
that $N_G(Q)$ fixes the vertex $Z_Q$ in $X$, which lies in $X_1$. Hence, $N_G(Q) \leq H$ for all $Q \leq S$. 
By Alperin's fusion theorem, we obtain that 
$$\mathcal{F}_S(H)=\mathcal{F}_S(G).$$

To complete the proof, it is enough to prove that $\pi \cong \pi _1(EG \times_H X_1)$ realizes the fusion system $\cF:=\cF_S(H)$. 
We argue that in this situation conditions (1) and (2) of Theorem \ref{thm:mainthm3} are satisfied.  For condition (1), we 
choose the vertex $v_0$ as the subgroup $C_1$. It is clear that $S$ fixes $C_1$ since $C_1 \leq Z(S)$. 

For condition (2), take any $E$ in the poset $X_1$. Since the group $E$ fixes  the vertex $E$, it also fixes the component $X_1$, hence $E \leq H$. 
By replacing $E$ with an $H$-conjugate subgroup we can assume $E$ is a fully $\cF$-normalized of subgroup of $S$. Since $\text{rk}_p (G)=2$, the subgroup $E$  has order $p$ or $p^2$. We will analyze these two cases.

We denote the elementary abelian $p$-subgroups of $S$ order $p$ and $p^2$ by $C_i$'s and $E_i$'s, respectively. Since $C_1\leq  Z(S)$, we have  $C_1\leq E_i$ for all $i$ because otherwise the group $C_1E_i$ is an elementary abelian group of order $p^3$, giving a contradiction with $\text{rk}_p(G)=2$.

\begin{figure}[h]
	\begin{tikzpicture}[node distance=2cm]
	\title{poset}
	\node(A4)                           {$C_1$};
	\node(V4)       [below right of=A4] {$E_n$};
	\node(C31)      [below left of=A4]  {};
	\node(C32)      [left of=C31]       {$E_1$};
	
	\node(C22)      [below of=V4]       {$C_y$};
	\node(C21)      [left of=C22]       {$C_x$};
	\node(C23)      [right of=C22]      {$C_z$};
	
	\node(a1)      [below of=C32]       {$...$};
	\node(a2)      [left of=a1]       {$C_2$};
	\node(a3)      [right of=a1]      {$C_r$};
	
	\draw(C32)       -- (a3);	
	\draw(C32)       -- (a2);
	\draw(A4)       -- (V4);
	
	\draw(A4)       -- (C32);
	
	\draw(V4)       -- (C21);
	\draw(V4)       -- (C22);
	\draw(V4)       -- (C23);
	\path (C31) -- (V4) node [midway] {$\cdots$};
	\end{tikzpicture}
	\caption{Poset $\cA_p (G)$ }
\end{figure}

If $E=E_i$ for some $i$, then there is an edge between $C_1$ and $E$. The stabilizer of $E$ has Sylow $p$-subgroup $N_S(E)$ because $E$ is fully $\cF$-normalized. The subgroup  $N_S(E)$ is contained in $N_H (C_1)\cap N_H(E)$, which is the stabilizer of the edge between $C_1$ and $E$. Hence, condition (2) is satisfied in this case.

Now assume that $E=C_i$ for some $i$. Then $C_i$ is contained in $E_j=C_1C_i$. 
Now we consider the path $C_1, E_j, C_i$ for the condition (2). Since $C_i$ is fully $\mathcal{F}$-normalized, $N_H(C_i)$ has Sylow $p$-subgroup $N_S(C_i)$ contained in $N_H(E_j)$ because if $s\in S$ fixes $C_i$ then it fixes $E_j=C_1C_i$. This shows that the edge from $E_j$ to $C_i$ satisfies the Sylow $p$-subgroup condition. 

For the first edge from $C_1$ to $E_j$, if  $E_j$ is also fully $\cF$-normalized then we are done in a similar way. Assume to the contrary that $E_j$ is not fully $\cF$-normalized.  Now, we need to change the path. There is an $h \in H$ such that $E_{j'}=h E_j h^{-1}$ is fully $\cF$-normalized. 
$N_S(E_{j'})$ is a Sylow $p$-subgroup of $N_H(E_{j'})$, which lies in the stabilizer of the edge $C_1$ to $E_{j'}$. By taking the $h$-conjugate of the conclusion from the previous paragraph we see that the edge from $E_{j'}$ to $hEh^{-1}$ also satisfies the Sylow $p$-subgroup condition. 
Thus each of the edges in the path $C_1, E_{j'}, hEh^{-1}$ satisfies the Sylow $p$-subgroup condition, hence by Theorem \ref{thm:mainthm3}, we conclude that $\pi$ has a Sylow $p$-subgroup isomorphic to $S$.

For every $p$-centric subgroup $P \leq S$, let $E$ be the maximal central elementary abelian subgroup of $P$. Then $E$ is characteristic in $P$, so $N_G(P) \leq N_G(E)$. This means $N_G(P)$ stabilizes $E$. Hence, by Theorem \ref{thm:mainthm3}, we can conclude that  $\cF_S(\pi)=\cF_S(G)$.\end{proof}

%-----------------------

\section{Signalizer functors for infinite group models}\label{sect:Signalizer}

Let $\cF$ be a saturated fusion system on a finite $p$-group $S$. The \emph{centric linking system} $\cL$ associated to $\cF$
is a category whose objects are the $\cF$-centric subgroups of $S$, together with a functor $\pi : \cL \to \cF^c$
and a monomorphism $\delta _P : P \to \Aut _{\cL} (P) $ for each $\cF$-centric subgroup $P \leq S$ satisfying certain properties 
(see \cite[Def 9.35]{Craven} for details).  The triple $(S, \cF, \cL)$ is called a \emph{$p$-local finite group} and its \emph{classifying space} 
is the space $|\cL | ^{\wedge} _p$.
When $\cF$ is realized by a finite group $G$, then $\cF$-centric subgroups of $S$ are $p$-centric 
in $G$. Hence for every $P\in \cF^c$, we have $C_G(P) =Z(P) \times C'_G(P)$ where $C'_G(P)$ has 
order coprime to $p$. In this case the morphisms of the category $\cL$ are given by 
$$\mor _{\cL} (P, Q) =\{ g C'_G(P) \, | \, g\in G, \, gPg^{-1} \leq Q \}= \{ g \in G \, | \, gPg^{-1} \leq Q \}/ C'_G(P).$$

For a discrete group $\pi$ with a Sylow $p$-subgroup $S$, the transporter category $\cT^c_S (\pi)$ is defined as the category whose objects are 
$\cF$-centric subgroups of $S$ and whose morphisms are given by
$$\mor _{\cT^c _S(\pi)} (P, Q) =\{ g\in \pi \, | \, gPg^{-1} \leq Q \}.$$ 

\begin{definition}\label{def:Signalizer} A \emph{signalizer functor} $\Theta$ on a discrete 
group $\pi$ is an assignment $P \to \Theta (P)$ for every $\cF$-centric $P \leq S$ such that $\Theta (P)$ is a complement 
of $Z(P)$ in $C_{\pi}(P)$ and such that if $gPg^{-1} \leq Q$ then $\Theta (Q) \leq g \Theta (P) g^{-1} $. 
\end{definition}

A signalizer functor $\Theta$ is a functor from the transporter category $\cT_S^c (\pi)$ to the category of groups.
Given a signalizer functor $\Theta$ on a discrete group $\pi$, we can define a 
quotient category $\cL_{\Theta}$ whose morphisms from $P$ to $Q$ are $$\mor _{\cL} (P, Q) = \mor_{\cT ^c_S(\pi) } (P, Q) / \Theta (P).$$ 
It is proved in \cite[Lemma 2.6]{AschbacherChermak} that the category $\cL_{\Theta}$ is a linking system for the fusion
system $\cF_S(\pi)$.  For infinite group models $\pi_R$ and $\pi_{LS}$ realizing fusion systems, we have the following theorem.

\begin{theorem}[Libman and Seeliger, \cite{LibmanSeeliger}]\label{thm:LibmanSeeliger2} 
Fix a $p$-local finite group $(S,\mathcal{F},\mathcal{L})$ and let $\pi$ be an 
infinite group model realizing $\cF$ obtained by either the Leary-Stancu model or the Robinsion model. 
Then there is a signalizer function $\Theta$ on $\pi$ such that 
$\cL$ is a quotient of the transporter system $\cT _S ^c (\pi)$.
\end{theorem}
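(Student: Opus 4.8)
The plan is to construct the signalizer functor $\Theta$ explicitly for each of the two models and then verify the two required properties (complement and functoriality) together with the identification of $\cL$ as the quotient $\cL_\Theta$. The key tool is the storing homomorphism $\chi : \pi \to G$ from Section \ref{sect:Storing}: since $\cF = \cF_S(G)$ is realized by a finite group $G$ (for the Leary-Stancu model one must first pass to a $p$-minimal subgroup controlling fusion, or argue directly), for every $\cF$-centric $P \leq S$ the image $\chi(P) = P$ is $p$-centric in $G$, so $C_G(P) = Z(P) \times C'_G(P)$ with $C'_G(P) = O_{p'}(C_G(P))$. The natural candidate is to set $\Theta(P) := \chi^{-1}(C'_G(P)) \cap C_\pi(P)$, i.e.\ the preimage in $C_\pi(P)$ of the $p'$-part of $C_G(P)$. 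First I would check that this $\Theta(P)$ is genuinely a complement to $Z(P) = \delta_P(P) \cap C_\pi(P)$ inside $C_\pi(P)$: the point is that $\chi$ restricted to $C_\pi(P)$ lands in $C_G(P)$, that $Z(P)$ maps isomorphically onto the $p$-part $Z(P) \leq C_G(P)$, and that $\ker\chi \cap C_\pi(P) \leq \Theta(P)$ because $\ker\chi = F$ is free (hence torsion-free, so its intersection with $C_\pi(P)$ consists of $p'$-elements — in fact elements of infinite order or identity). So $C_\pi(P) = Z(P) \cdot \Theta(P)$ and $Z(P) \cap \Theta(P) = 1$, giving the complement property.

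Next I would verify functoriality: if $gPg^{-1} \leq Q$ with $g \in \pi$, I need $\Theta(Q) \leq g\,\Theta(P)\,g^{-1}$. Here conjugation by $g$ carries $C_\pi(Q)$ into $C_\pi(P)$ (since $C_\pi(Q) \leq C_\pi(gPg^{-1}) = g\,C_\pi(P)\,g^{-1}$), and applying $\chi$ this matches the corresponding inclusion $C_G(\chi(Q)) \to \chi(g)\,C_G(P)\,\chi(g)^{-1}$ downstairs, under which the $p'$-radical $C'_G(Q)$ lands in the $p'$-radical $\chi(g)\,C'_G(P)\,\chi(g)^{-1}$ because $O_{p'}$ is preserved by isomorphisms. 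Pulling back along $\chi$ and intersecting with the relevant centralizers yields $\Theta(Q) \leq g\,\Theta(P)\,g^{-1}$, so $\Theta$ is a signalizer functor in the sense of Definition \ref{def:Signalizer}.

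Finally I would identify $\cL_\Theta$ with the centric linking system $\cL$. By \cite[Lemma 2.6]{AschbacherChermak} (cited in the excerpt) $\cL_\Theta$ is \emph{a} linking system for $\cF_S(\pi) = \cF$; since $\cF$ is saturated, the centric linking system associated to a saturated fusion system is unique (by the theorem of Chermak, or Oliver, on existence and uniqueness of linking systems), so it suffices to check that $\cL_\Theta$ has the right morphism sets, or simply invoke uniqueness. Concretely, $\mor_{\cL_\Theta}(P,Q) = \{g \in \pi \mid gPg^{-1} \leq Q\}/\Theta(P)$, and applying $\chi$ induces a bijection onto $\{g \in G \mid gPg^{-1} \leq Q\}/C'_G(P) = \mor_{\cL}(P,Q)$ using the computation of morphisms in $\cL$ recalled at the start of this section — surjectivity from surjectivity of $\chi$ (via the storing property on each $C_\pi(P)$-coset), injectivity from the definition of $\Theta(P)$ as exactly the preimage of $C'_G(P)$. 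This gives an isomorphism $\cL_\Theta \cong \cL$ compatible with the projections from the transporter category, which is the assertion.

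The main obstacle I expect is the surjectivity/injectivity bookkeeping in the last step — specifically, showing that the storing homomorphism $\chi$ induces a \emph{bijection} on transporter cosets $\{g \in \pi \mid gPg^{-1} \leq Q\}/\Theta(P) \to \{g \in G \mid gPg^{-1} \leq Q\}/C'_G(P)$. Surjectivity requires that every $g \in G$ with $gPg^{-1} \leq Q$ lifts to some $\gamma \in \pi$ with $\gamma P \gamma^{-1} \leq Q$ (not merely to \emph{some} preimage $\gamma$), which uses that $P$ and $Q$ are both contained in vertex groups on which $\chi$ is injective together with the way morphisms in $\pi$ between $p$-subgroups factor (via Bass-Serre theory / the normal form in a graph of groups); this is where one genuinely uses that $\cF_S(\pi) = \cF_S(G)$ rather than just $\cF_S(\pi) \subseteq \cF_S(G)$. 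Injectivity is the cleaner direction: if $\gamma_1, \gamma_2$ have the same image mod $C'_G(P)$ then $\gamma_2^{-1}\gamma_1 \in \chi^{-1}(C'_G(P))$, and one checks it actually lies in $C_\pi(P) \cap \chi^{-1}(C'_G(P)) = \Theta(P)$ using that $\gamma_2^{-1}\gamma_1$ normalizes $P$ and induces the trivial automorphism of $P$ downstairs, hence (by injectivity of $\chi$ on the vertex group containing $N_\pi(P)$, or a direct free-group argument) centralizes $P$ in $\pi$.
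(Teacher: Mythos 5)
There is a genuine gap, and it is one of scope rather than of detail. The theorem is stated for an \emph{arbitrary} $p$-local finite group $(S,\cF,\cL)$ — including exotic fusion systems, which are the raison d'\^etre of the Leary--Stancu and Robinson constructions — whereas your entire argument is predicated on the existence of a finite group $G$ with $\cF=\cF_S(G)$ and a storing homomorphism $\chi:\pi\to G$. Your opening clause ``since $\cF=\cF_S(G)$ is realized by a finite group $G$'' is therefore an unjustified restriction: when $\cF$ is exotic there is no such $G$, the groups $C'_G(P)=O_{p'}(C_G(P))$ do not exist, and the definition $\Theta(P):=\chi^{-1}(C'_G(P))\cap C_\pi(P)$ has no meaning. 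This is precisely why the paper does not reprove the statement by group-theoretic means: it cites \cite[Thm 1.1]{LibmanSeeliger}, whose argument is topological, using the linking system associated to a map $f:BS\to |\cL|^{\wedge}_p$ and producing for each $P\in\cF^c$ an extension $1\to\Theta(P)\to N_\pi(P)\to\Aut_{\cL}(P)\to 1$ with no ambient finite group in sight. What you have written is, in the realizable $p$-minimal case, essentially the computation the paper records in Example \ref{ex:Theta} — where it appears as a \emph{consequence} of the theorem (a description of $\Theta(P)$ via the storing homomorphism), not as a proof of it.

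Within the realizable case your construction is essentially sound (the complement property, functoriality via $O_{p'}$, and surjectivity on transporter cosets all go through as you indicate), but two points need tightening. For injectivity, from $\chi(\gamma_2^{-1}\gamma_1)\in C'_G(P)$ you want $\delta:=\gamma_2^{-1}\gamma_1\in C_\pi(P)$; the justification is not ``injectivity of $\chi$ on the vertex group containing $N_\pi(P)$'' — the group $N_\pi(P)$ is infinite and lies in no vertex group — but rather that $\delta P\delta^{-1}$ and $P$ both lie in the finite $p$-group $\gamma_2^{-1}Q\gamma_2$, a conjugate of a subgroup of a vertex group on which $\chi$ is injective, forcing first $\delta P\delta^{-1}=P$ and then $\delta p\delta^{-1}=p$ for all $p\in P$. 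Second, invoking the Chermak--Oliver uniqueness of centric linking systems is unnecessary once you have the explicit bijection of morphism sets, and it cannot in any case substitute for the missing construction of $\Theta$ in the exotic situation. To prove the theorem as stated you would need to replace the storing homomorphism by the topological comparison with $|\cL|$, or give some other intrinsic construction of $\Theta(P)$ inside $C_\pi(P)$.
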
 
 
Theorem \ref{thm:LibmanSeeliger2} is proved as part of \cite[Thm 1.1]{LibmanSeeliger} using 
topological arguments, in particular, using the definition of linking system associated to a map $f: BS \to X$.  In the proof it is shown that for each $P\in \cF^c$,
there is a group extension 
$$ 1 \to \Theta (P) \to N_{\pi} (P) \maprt{\chi_P} \Aut _{\cL} (P) \to 1.$$
The group $\Aut _{\cL} (P)$ has $N_S(P)$ as a Sylow $p$-subgroup, and it is known that the induced fusion system
is isomorphic to the normalizer fusion system $N_{\cF} (P)$.  If $\pi$ is an infinite group model constructed by the
Leary-Stancu model or the Robinson model, then $\pi$ has a Sylow $p$-subgroup isomorphic to $S$, but in general this property
 is not inherited by subgroups. So, it is not clear that 
 the normalizer group $N_{\pi} (S)$ has a Sylow $p$-subgroup.
 
 \begin{question} Does the normalizer group $N_{\pi} (P)$ in general have a Sylow $p$-subgroup isomorphic to $N_S(P)$? Is it possible to see
 $N_{\pi} (P)$ as an infinite model for the fusion system $N_{\cF} (P)$ which is realizable by the finite group $\Aut _{\cL} (P)$ and view the above
 sequence as a sequence coming from a storing homomorphism of a realizable fusion system?
 \end{question} 
 
 \begin{example}\label{ex:Theta} Let $\cF$ be a fusion system of a finite group $G$ with Sylow $p$-subgroup $S$. Assume that $G$ is 
 $p$-minimal, and that $\pi$ is an infinite group realizing $\cF$ obtained by either the Leary-Stancu model or by the Robinson model. There is a storing homomorphism $\chi: \pi \to G$ whose kernel is a free group $F$. For each $P \in \cF^c$, reducing the storing homomorphism to $N_{\pi}(P)$, we obtain a short exact sequence 
 $$ 1 \to N_F(P) \to N_{\pi } (P) \to  \overline N_{\pi} (P)  \to 1,$$ where  $\overline N_G(P)= \chi (N_{\pi} (P) )$ is a subgroup of $N_G(P)$.  
By Theorem \ref{thm:LibmanSeeliger2}, the homomorphism $$\chi_P : N_{\pi} (P) \to \Aut _{\cL } (P)= N_G(P)/ C_G' (P)$$ is surjective, hence $\overline N_{\pi} (P) C'_G(P)=N_G(P)$. This gives that $\Theta$ fits into an extension of the form $$1 \to N_F(P) \to \Theta (P) \to \overline \Theta (P) \to 1$$ where $\overline \Theta (P) =\overline N_{\pi } (P) \cap C' _G(P)$. Note that $\overline \Theta (P)$ is a finite group whose order is coprime to $p$ and $N_F(P)\leq F$ is a free group.
 \end{example}
 
 One of the consequences of the calculation given in Example \ref{ex:Theta} is that mod-$p$ cohomology of the 
 group $\Theta (P)$ is zero at dimensions greater than $1$. It turns out that this holds more generally for any saturated 
 fusion system.
 
\begin{proposition}\label{pro:Homology} Let $\cF$ be a saturated fusion system on a finite $p$-group $S$. Suppose $\pi$ is an infinite group model
obtained by either the Leary-Stancu model or the Robinson. Let $P \to \Theta(P)$ be the signalizer functor
for $\pi$ such that $\cL$ is a quotient of the transporter system $\cT _S ^c (\pi)$. 
Then for every $P\in \cF ^c $ and for every $i \geq 2$, we have $H_i (\Theta (P); \bF_p)=0$. 
\end{proposition}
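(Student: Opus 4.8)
The plan is to show that every finite subgroup of $\Theta(P)$ has order prime to $p$, and then to feed this into the graph of groups structure of $\pi$.

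In both the Leary--Stancu and the Robinson construction, $\pi=\pi(\cG,Y)$ is the fundamental group of a graph of groups with \emph{finite} vertex groups (namely $S$, respectively $S$ together with the finite groups realizing the normalizer fusion systems $N_\cF(P_i)$). By Theorem \ref{thm:BassSerre1}, $\pi$ acts without inversion on a tree $T$ whose vertex stabilizers are the finite groups $gG_vg^{-1}$. Restricting this action to the subgroup $\Theta(P)\le C_\pi(P)\le\pi$ gives an action of $\Theta(P)$ on $T$ with finite stabilizers, so $\Theta(P)$ is itself the fundamental group of a graph of groups $(\cG',Y')$, with $Y'=T/\Theta(P)$, all of whose vertex and edge groups are finite subgroups of $\Theta(P)$. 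If every such finite group has order prime to $p$, then the cellular chain complex $C_*(T;\bF_p)$ is a length-one resolution of the trivial module by projective $\bF_p\Theta(P)$-modules (each summand $\bF_p[\Theta(P)/G']$ with $|G'|$ prime to $p$ is projective), so $H_i(\Theta(P);\bF_p)=0$ for $i\ge2$; equivalently this follows from the homology analogue of the graph-of-groups long exact sequence used in the proof of Theorem \ref{thm:Storing}, since for $i\ge 2$ both $\bigoplus_{v\in OV}H_i((\cG')_v;\bF_p)$ and $\bigoplus_{e\in OE}H_{i-1}((\cG')_e;\bF_p)$ vanish.

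It therefore remains to show that $\Theta(P)$ has no element $x$ of order $p$; by Cauchy this forces all its finite subgroups to be $p'$-groups. Suppose $x\in\Theta(P)$ with $x^p=1$ and $x\ne1$. Since $\Theta(P)$ is a complement of $Z(P)$ in $C_\pi(P)$ and $P\cap C_\pi(P)=Z(P)$, we have $\Theta(P)\cap P=\Theta(P)\cap Z(P)=1$, hence $\langle x\rangle\cap P=1$; as $x$ centralizes $P$, the subgroup $P\langle x\rangle=P\times\langle x\rangle$ is a finite $p$-subgroup of $\pi$ strictly containing $P$. Because $S$ is a Sylow $p$-subgroup of $\pi$, there is $g\in\pi$ with $g\bigl(P\langle x\rangle\bigr)g^{-1}\le S$. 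Then $c_g\colon P\to gPg^{-1}$ is an isomorphism in $\cF=\cF_S(\pi)$, so $gPg^{-1}$ is $\cF$-conjugate to $P$ and hence also $\cF$-centric, whence $C_S(gPg^{-1})\le gPg^{-1}$. But $gxg^{-1}\in S$ centralizes $gPg^{-1}$, so $gxg^{-1}\in gPg^{-1}$, i.e.\ $x\in P$, contradicting $\Theta(P)\cap P=1$.

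I expect the crux of the argument to be exactly this last paragraph: one must locate an element of order $p$ in $\Theta(P)$ and then contradict its existence by simultaneously using that $\Theta(P)$ complements $Z(P)$ in the centralizer (Definition \ref{def:Signalizer}), that $P$ is $\cF$-centric, and that $\pi$ has Sylow $p$-subgroup $S$. Once the stabilizers are known to be $p'$-groups everything is formal. (One could alternatively run the argument inside the extension $1\to\Theta(P)\to N_\pi(P)\xrightarrow{\chi_P}\Aut_\cL(P)\to1$ of Theorem \ref{thm:LibmanSeeliger2}, but this is not needed, and indeed the proof uses only the definition of a signalizer functor together with the Sylow property of $\pi$.)
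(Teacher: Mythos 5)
Your proposal is correct and follows essentially the same route as the paper: reduce to showing that the finite vertex stabilizers of the $\Theta(P)$-action on the Bass--Serre tree are $p'$-groups, then rule out a nontrivial finite $p$-subgroup (equivalently, an element of order $p$) of $\Theta(P)$ by conjugating $P\langle x\rangle$ into $S$ and invoking $\cF$-centricity together with $\Theta(P)\cap Z(P)=1$. The only cosmetic differences are that you phrase the key step via an order-$p$ element plus Cauchy where the paper works directly with a finite $p$-subgroup $Q\le\Theta(P)$, and that you spell out the length-one projective resolution $C_*(T;\bF_p)$ where the paper just cites the graph-of-groups (co)homology sequence.
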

 
\begin{proof} Note that $\pi$ has a Sylow $p$-subgroup $S$ and $\cF=\cF_S(\pi )$. Let $P$ be an $\cF$-centric subgroup of $S$.
Since $\Theta (P)$ is a subgroup of the fundamental group of a graph of groups with finite vertex groups, it is itself a graph of groups with finite
vertex groups. The result follows from the cohomology sequence for a graph of groups once we show that all finite vertex groups of $\Theta (P)$ have order coprime to $p$. Let $Q \leq \Theta (P)$ be a finite $p$-group. Then $Q$ centralizes $P$, hence $PQ$ is a finite $p$-subgroup of $\pi$.
Let $g\in \pi$ be such that $g(PQ)g^{-1} \leq S$. Then $P'=gPg^{-1}$ is a subgroup of $S$ and $gQg^{-1} \leq S$ centralizes $P'$. Since $P$ is $\cF$-centric, and $P'$ is $\cF$-isomorphic to $P$, we have $C_S(P')=Z(P')$. This gives that $gQg^{-1}\leq Z(P')$, which implies $Q\leq Z(P)$. Since $\Theta (P)\cap Z(P)=1$, we get $Q=1$. We conclude that all finite subgroups of $\Theta(P)$ have order coprime to $p$. 
\end{proof}

The proposition we proved above has consequences for cohomology groups of the category $\cL$ with twisted coefficients. For a commutative ring $R$, an \emph{$R \cL$-module $M$} is defined as a contravariant functor from $\cL$ to the category of $R$-modules. Let $\cT:=\cT_S ^c (\pi)$ denote the transporter category for $\pi$ defined on $\cF$-centric subgroups of $S$. %, and let $q: \cT \to \cL$ denote the quotient functor. 
%We can view an $R\cL$-module $M$ as an $R\cT$-module via the quotient functor $q: \cT \to \cL $. We denote this $R\cT$-module by $q^* M$.
The quotient functor $q: \cT \to \cL$ gives an extension of small categories $$ 1 \to \{ \Theta (P) \} \to \cT \to \cL \to 1.$$ 
Theorem \ref{thm:thmB} is proved using a spectral sequence for extensions of small categories. We first introduce necessary definitions. We refer the reader to \cite[Appendix A]{OliverVentura} for details.

\begin{definition}[Def A.5, \cite{OliverVentura}] Let $\cC$ and $\cD$ be two small categories such that $\Ob (\cC)=\Ob(\cD)$. Let $\varphi: \cC \to \cD$ be a functor that is identity on objects and surjective on morphism sets. For each $x\in \Ob (\cC)$, set $$K(x) :=\ker \{ \varphi _{x,x} : \Aut _{\cC} (x) \to \Aut _{\cD} (x)\}.$$ We say $\varphi$ is \emph{source regular} if for every $x, y \in \Ob(\cC)$, the group $K(x)$ acts freely on $\mor _{\cC} (x,y) $ and $\varphi _{x,y} : \mor _{\cC} (x,y) \to \mor _{\cD} (x,y)$ is the orbit map of this action. 
\end{definition}

When $\varphi : \cC\to \cD$ is a source regular functor, we say  
$$1 \to \{ K(x) \} \to \cC \maprt{\varphi} \cD \to 1$$ is a source regular extension, 
and call the family $\{K(x) \}$ the kernel of the functor $\varphi$. For a small category $\cC$, an $R\cC$-module is defined as a functor $M : \cC ^{op} \to R$-mod. Note that the assignment $x \to H_i (K(x) ; R)$ defines an $R\cD$-module since $K(x)$ acts trivially on homology groups $H_i (K(x); R)$ when $R$ denotes a trivial $RK(x)$-module. Given a functor $\varphi : \cC\to \cD$ and an $R\cD$-module $M$, the $R\cC$-module defined as the composition $$\cC \maprt{\varphi} \cD \maprt{M} R{\rm -mod} $$
is denoted by $\varphi ^* M$.

\begin{proposition}\label{pro:Spectral}
Let $\varphi : \cC\to \cD$ be a source regular functor with kernel $\{ K(x) \}$. Then for every $R\cD$-module $M$, there is a spectral sequence
$$E_2 ^{i,j} =\ext ^j _{R \cD }  (H_i (K(-); R) , M) \Rightarrow H^{i+j} (\cC ; \varphi^* M).$$
\end{proposition}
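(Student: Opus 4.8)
The plan is to produce the spectral sequence as a Grothendieck-type spectral sequence attached to a composition of functors, following the standard machinery for extensions of small categories (as in \cite[Appendix A]{OliverVentura}). First I would recall that cohomology of a small category $\cC$ with coefficients in an $R\cC$-module $N$ is computed as $H^*(\cC;N)=\ext^*_{R\cC}(\underline R, N)$, where $\underline R$ is the constant functor. The source regularity hypothesis is exactly what is needed to control how $R\cC$-modules restrict along $\varphi$: because $K(x)$ acts freely on each $\mor_\cC(x,y)$ with orbit map $\varphi_{x,y}$, for any $R\cD$-module $M$ the $R\cC$-module $\varphi^*M$ is obtained by ``inflation,'' and more importantly the functor of ``$K(-)$-fixed points / coinvariants'' relating $R\cC$-modules and $R\cD$-modules is exact on the relevant classes of modules and sends projectives to projectives. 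Concretely, I would identify the right adjoint (or left adjoint, depending on variance conventions) to $\varphi^*$ on module categories and show it has the expected derived functors, whose value on $\varphi^*$ of something is governed by the groups $H_*(K(x);R)$.

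The key steps, in order, are: (1) Set up the adjunction between $R\cC\text{-mod}$ and $R\cD\text{-mod}$ given by $\varphi^*$ and its adjoint $\varphi_!$ (or $\varphi_*$); using source regularity, compute that the derived functors $L_i\varphi_!$ (resp.\ $R^i\varphi_*$) applied to a projective $R\cD$-module, or more precisely the relevant composite, produce the modules $x\mapsto H_i(K(x);R)\otimes(\text{something})$. The cleanest route is to check that $\varphi^*$ sends projective $R\cD$-modules to objects that are acyclic for the functor $\hom_{R\cC}(-,\varphi^*M)$ viewed appropriately, or dually that a projective resolution of $\underline R$ over $R\cC$ maps under $\varphi_!$ to a complex whose homology is $H_*(K(-);R)$. (2) Apply the Grothendieck spectral sequence for the composite of two functors: $\ext^*_{R\cC}(\underline R_\cC, \varphi^*M)$ is computed by first deriving $\varphi_!$ (or the change-of-rings functor) applied to $\underline R_\cC$, giving $H_i(K(-);R)$ in degree $i$, and then deriving $\ext^*_{R\cD}(-,M)$, giving the $E_2$-page $E_2^{i,j}=\ext^j_{R\cD}(H_i(K(-);R),M)$. (3) Identify the abutment: $\ext^{i+j}_{R\cC}(\underline R_\cC,\varphi^*M)=H^{i+j}(\cC;\varphi^*M)$.

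The main obstacle I anticipate is step (1): verifying precisely that the derived functors of the change-of-rings / Kan extension along $\varphi$ applied to the constant module yield exactly $x\mapsto H_i(K(x);R)$ as $R\cD$-modules, with the correct module structure (one must check the $\cD$-action on these homology groups is the natural one, using that $K(x)$ acts trivially on $H_i(K(x);R)$ so the assignment descends to $\cD$). This is where source regularity is used in an essential way — it guarantees the fibers of $\varphi$ on morphism sets are principal homogeneous $K(x)$-sets, so that $\varphi^*$ composed with the adjoint behaves like taking group homology levelwise. Once this fiberwise computation is in hand, assembling the Grothendieck spectral sequence is formal. I would also remark that when all $K(x)$ are trivial the sequence degenerates to the obvious isomorphism $H^*(\cC;\varphi^*M)\cong H^*(\cD;M)$, which is a useful sanity check, and that the edge maps of this spectral sequence are what produce the long exact sequence of Theorem \ref{thm:thmC} once Proposition \ref{pro:Homology} forces $H_i(K(-);R)=H_i(\Theta(-);\bF_p)$ to vanish for $i\geq 2$.
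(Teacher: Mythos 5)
Your proposal is correct and follows essentially the same route as the paper: the paper takes the standard projective resolution $C_*(-)$ of $\underline{R}$ over $R\cC$, uses the adjunction to rewrite $\hom_{R\cC}(C_*,\varphi^*M)$ as $\hom_{R\cD}(C'_*,M)$ where $C'_*=\varphi_!C_*$ is the complex of $K(-)$-orbits (a complex of projective $R\cD$-modules), identifies $H_i(C'_*(x);R)\cong H_i(K(x);R)$ via the free $K(x)$-action guaranteed by source regularity, and then applies the hyper-Ext spectral sequence — which is exactly your Grothendieck composite-functor argument in slightly different packaging. Your anticipated ``main obstacle'' (the fiberwise identification of the derived pushforward of $\underline R$ with $H_*(K(-);R)$) is indeed the crux, and it is resolved exactly as you suggest.
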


\begin{proof} There is a standard resolution for calculating the cohomology groups of a small category $\cC$ with coefficients in an $R\cC$-module $M$.
For each $n$, let $C_n (-)$ be
defined as the $R\cC$-module $$C_n (-) =\bigoplus _{x_0 \to \cdots \to x_n } R \mor _{\cC}  (-, x_0 )$$
where the sum is over all sequences of $n$ morphisms $x_0 \to \cdots \to x_n$ in $\cC$ (see \cite[Lemma A.1]{OliverVentura} for details). The complex $C_* (-)$ gives a 
projective resolution of the constant functor $\underline R$ as an $R\cC$-module. The cohomology of $\cC$ with coefficients in $\varphi^* M$ is defined as the cohomology of the cochain complex $\hom _{R\cC} ( C_*  (-) ; \varphi^*M).$ It is easy to see that 
$$\hom _{R\cC} (\mor _{\cC} (- , x_0 ) , q^*M  )\cong M (x_0 ) \cong \hom _{R\cD } ( \mor _{\cD} (-, x_0 ), M).$$ 
Hence, we can write 
$$H^* (\cC ; \varphi ^* M ) =H^* ( \hom _{R\cC} ( C_*  (-) ; M))\cong H^* (\hom _{R\cD} ( C'_*  (-) ; M))$$
where $C'_* (-) $ is the complex of projective $R\cD$-modules such that 
$$C'_n (x) = \bigoplus _{x_0 \to \cdots \to x_n } R [\mor _{\cC}  (x, x_0 )/ K(x)]$$
for every $x\in \Ob (\cC)$. Alternatively we can consider $C_n'(-)$ as the chain complex obtained from $C_*(-)$
by applying the functor $(-)_K$ introduced in the proof of \cite[Lem 1.3]{BLO1}.

For the calculation of the cohomology group $H^* (\hom _{R\cD} ( C'_*  (-) ; M))$, there is an hyper-cohomology 
spectral sequence (see \cite[Prop 3.4.3]{Benson2}) with $E_2$-term
$$E_2 ^{i,j} =\ext ^j _{R \cD }  (H_i ( C'_* (- ); R) , M  ) \Rightarrow H^{i+j} (\cC ; \varphi ^* M).$$

Since $K (x) $ acts freely on $\mor _{\cC} (x, y)$ for every $x, y \in \Ob (\cC)$, we have 
$$H_i ( C'_* (x); R)  \cong H_i (K(x); R)$$ for every $x\in \Ob (\cC)$, and for every $i$. 
This completes the proof.
\end{proof}

\begin{proof}[Proof of Theorem \ref{thm:thmC}] 
Let $R=\bF_p$. The quotient functor $q: \cT \to \cL$ is a source regular functor with kernel $\{ \Theta (P)\}$. 
Hence by Proposition
\ref{pro:Spectral}, for every $R\cL$-module $M$, there is a spectral sequence
$$E_2 ^{i,j} =\ext ^j _{R \cL }  (H_i (\Theta (-) ; R) , M) \Rightarrow H^{i+j} (\cT ; q^* M).$$
By Proposition \ref{pro:Homology}, $H_i (\Theta (P); R)=0$ for every $i \geq 2$, hence the $E_2$-term of the spectral sequence 
has only two nonzero 
horizontal lines at $i=0,1$ with a single differential $$d_2 : \ext ^j _{R \cL }  (H_1 ( \Theta (-) ; R) , M  ) \to 
\ext ^{j+2} _{R \cL }  (R , M  )$$ for $j \geq 0$. The long exact sequence given in the statement of the theorem follows
from the fact that $E_2$-page is a two line spectral sequence. 
\end{proof}

 %------------------------------------------

\end{document}